\title{Optimal Codes and Arcs for the Generalized Hamming Weights}
\author{Sascha Kurz$^1$, Ivan Landjev$^2$, and Assia Rousseva$^3$}
\date{$^1$ Mathematisches Institut, Universit\"at Bayreuth, D-95440 Bayreuth, Germany\\$^2$
Institute of Mathematics and Informatics, Bulgarian Academy of Sciences, 8 Acad G. Bonchev str., 1113 Sofia, Bulgaria\\ New Bulgarian University, 21 Montevideo str, 1618 Sofia, Bulgaria\\$^3$ Sofia University, Faculty of Mathematics and Informatics, J. Bourchier Blvd., 1164 Sofia, Bulgaria\\}
\def\PG{\mathrm{PG}}
\def\F{\mathbb{F}}
\def\N{\mathbb{N}}
\def\wt{\operatorname{wt}}
\def\supp{\operatorname{supp}}
\def\cM{\mathcal{M}}
\def\cK{\mathcal{K}}
\newtheorem{corollary}{Corollary}
\newtheorem{theorem}{Theorem}
\newtheorem{definition}{Definition}
\newtheorem{proposition}{Proposition}
\newtheorem{lemma}{Lemma}
\newtheorem{example}{Example}
\theoremstyle{remark}
\begin{document}

\maketitle

\begin{abstract}
    \noindent
   This text contains some notes on the Griesmer bound. In particular, we 
   give a geometric proof of the Griesmer bound for the generalized weights
   and show that a Solomon--Stiffler type construction attains it if the minimum distance is sufficiently large. We also determine the parameters of optimal binary codes for dimensions at most seven and the optimal ternary codes for dimensions at most five.
  \medskip
  
  \noindent
  \textbf{Keywords:} linear codes, generalized Hamming weight, Galois geometry, additive codes, Griesmer bound
  
  \smallskip
  
  \noindent
  \textbf{Mathematics Subject Classification:} 94B65, 94B05, 51E22  
\end{abstract}

\section{Introduction}
\label{sec_introduction}

The Hamming weight of a codeword equals the size of its support and the minimum Hamming weight of a linear code is the minimum Hamming weight of the non-zero codewords. For a subcode the support us given by the set of all positions where at least one of the codewords in the subcode has a non-zero entry. With this, the $r$th generalized Hamming weight of a linear code is the size of the smallest support of an $r$-dimensional subcode. The generalized Hamming weights can be used to describe the cryptography performance of a linear code over the wire-tap channel of type~II \cite{wei1991generalized} and to determine the trellis complexity of the code \cite{chen2001trellis,forney1994density,forney1994dimension,kasami1993optimum}. From a geometrical point of view the $r$th generalized Hamming weight of a linear code corresponds to the number of points outside of a subspace of codimension $r$, where the points are the one dimensional subspaces spanned by the columns of a generator matrix of the linear code, see e.g.\ \cite{helleseth1992generalized,tsfasman1995geometric}. While one can easily find tables on the best known bounds for the parameters of linear codes with respect to the minimum Hamming distance for small parameters we were not able to find such a table for the $r$th generalized Hamming weight. The aim of this paper is to tabulate those numbers, where we will mainly use the geometric reformulation as multisets of points in projective spaces. In \cite{ball2025additive} the authors gave a construction for additive codes based on linear codes and their generalized Hamming weights. So, the constructions studied in this paper also give constructions for additive codes and the studied upper bounds show limitations for this specific construction. More precisely, for all sufficiently large distances a Griesmer type bound for additive codes can always be attained \cite{kurz2024additive}. As we will see, this also holds for linear codes and the $r$th generalized Hamming weight, but the mentioned construction of additive codes  results in optimal codes for a subset of the parameters only.

The remaining part of the paper is structured as follows. In Section~\ref{sec_preliminaries} we present the necessary preliminaries. In Section~\ref{sec_griesmer_bound} we analyze the Griesmer bound for the $r$th generalized Hamming distance. It turns out that a Griesmer code with respect to the Hamming distance also attains the Griesmer bound for the $r$th generalized Hamming distance. In  Section~\ref{sec_exactvalues}, we determine the minimum possible lengths of 
$[n,k]_q$ codes with given minimum $r$th generalized Hamming weight $d$ for 
some small parameters. We present our results in the geometric version, i.e.\ we determine the maximum number $m_q^{(r)}(k,w)$ of points in $\PG(k-1,q)$ such that each subspace of dimension $r$ contains at most $w$ points. We completely determine $m_2^{(r)}(k-1,w)$ for all $k\le 7$ and $m_3^{(r)}(k-1,w)$ for all $k\le 5$.

\section{Preliminaries}
\label{sec_preliminaries}

{\bf Linear codes.}\
A linear $[n,k]_q$ code $C$ is a $k$-dimensional subspace of $\F_q^n$. For $c=\left(c_1,\dots,c_n\right)\in\F_q^n$ we call
\begin{equation}
  \supp(c):=\left\{1\le i\le n\,:\, c_i\neq 0\right\}
\end{equation}
the \emph{support} of $c$ and $\wt(c)=|\supp(c)|$ its \emph{weight}. More generally,
for a vector subspace $C$ in $\F_q^n$, we define its \emph{support} as the set
of all coordinate positions in which the vectors of $C$ are not identically zero.
In other words,
\begin{equation}
  \supp(C):=\left\{1\le i\le n\,:\, \exists c=\left(c_1,\dots,c_n\right)\in C, c_i\neq 0\right\}.
\end{equation}
For two
$\F_q$-vector spaces $C$, $C'$ of $\F_q^n$ we write $C'\le C$ if $C'$ is a subspace of $C$.
The \emph{$r$th generalized Hamming weight} of
a linear code $C$ \cite{helleseth1977weight,klove1978weight}, denoted as $d_r(C)$, is the size of the
smallest support of an $r$-dimensional subcode of $C$ , i.e.\
\begin{equation}
   d_r(C):= \min\!\left\{|\supp(C')|\,:\, C'\le C, \dim(C')=r\right\}.
\end{equation}
In particular, $d_1(C)$ is the minimum Hamming distance of the $C$. 
A linear code of length $n$, dimension $k$ and $r$th generalized Hamming weight 
equal to $d_r$ will be called
an $[n,k,d_r]^{(r)}$-code.
The sequence $\left(d_1(C),\dots,d_k(C)\right)$ is called the \emph{weight hierarchy} of a linear $[n,k]_q$ code $C$. Clearly, $1\le d_1(C)< d_2< \dots < d_k(C)\le n$ 
(cf. e.g. \cite{wei1991generalized}). 


\medskip

\noindent
{\bf Multisets of points in $\mathbf{\PG(k-1,q)}$.}
A \emph{multiset} in $\PG(k-1,q)$ is a mapping
$\mathcal{K}\colon\mathcal{P}\to\mathbb{N}_0$, from the pointset $\mathcal{P}$ 
of $\PG(r,q)$ to the set of non-negative integers,
which assigns a multiplicity to each point of $\mathcal{P}$. This mapping is extended
to any subset $\mathcal{Q}$ of $\mathcal{P}$ by 
$\mathcal{K}(\mathcal{Q})=\sum_{P\in\mathcal{Q}} \mathcal{K}(P)$.
The integer $\mathcal{K}(\mathcal{Q})$ is called the multiplicity of 
$\mathcal{Q}$.

A multiset $\mathcal{K}$ is
called an \emph{$(n,w)$-arc} (resp.~\emph{$(n,u)$-minihyper}), if
$\mathcal{K}(\mathcal{P}) = n$,
$\mathcal{K}(H)\le w$ (resp. $\mathcal{K}(H)\ge u$)
for each hyperplane $H$ in $\PG(k-1,q)$, and
there is a hyperplane $H_0$ with $\mathcal{K}(H_0)=w$ (resp. $\mathcal{K}(H_0)=u$).

Let $\mathcal{K}$ be a multiset in $\PG(k-1,q)$. We denote by $w_r$ the maximal multiplicity of a $r$-dimensional subspace of $\PG(r,q)$ with respect to $\mathcal{K}$. In other words,
\begin{equation}
w_r:= \max \left\{\mathcal{K}(S)\,: S \text{ is a subspace of } \PG(k-1,q), \dim(S)=r \right\}.
\end{equation}
Similarly, we denote by $u_r$ the minimal multiplicity of a $r$-dimensional subspace of 
$\PG(r,q)$ with respect to $\mathcal{K}$. In other words,
\begin{equation}
u_r:= \min \left\{\mathcal{K}(S)\,: S \text{ is a subspace of } \PG(k-1,q), \dim(S)=r \right\}.
\end{equation}
A multiset in $\PG(k-1,q)$ of multiplicity $n$ is called an
$(n,w_r)^{(r)}$-arc (resp $(n,u_r)^{(r)}$-minihyper)
 if each $r$-dimensional subspace of $\PG(k-1,q)$ has multiplicity 
at most $w_r$ (resp. at least $u_r$), 
and there exists an $r$-dimensional subspace with this multiplicity.

It is known that there exists  a one-to-one correspondence between the 
isomorphism classes of the linear $[n,k]_q$-codes of full length 
(no coordinate is identically zero in all codewords) and the classes of 
projectively equivalent multisets in $\PG(k-1,q)$, where $k\ge2$.
The correspondence can be described as follows.
Let $C$ be a linear code of full length 
with parameters $[n,k,d]_q$, and let $G=({g}_1^T\cdots{g}_n^T)$,
${g}_i\in\mathbb{F}_q^k$, be a generator matrix of $C$.
The columns ${g}_i^T$ are considered as the homogeneous coordinates of points in
$\PG(k-1,q)$. In this way, the generator matrix $G$ is associated with an ordered
$n$-tuple of points $(P_1,\ldots,P_n)$ in $\PG(k-1,q)$. 
This $n$-tuple defines a multiset $\mathcal{K}$ by $\mathcal{K}(P):=|\{i\vert P_i=P\}|$. 
Clearly,
\begin{equation}
\label{eq:w+d}
w_r+d_{k-r-1}=n.
\end{equation}
This implies $w_0< w_1< \dots < w_{k-1}=n$. If $s\ge w_0$ is an integer, then
$\mathcal{K}'=s-\mathcal{K}$ is a multiset of cardinality $sv_{k}-n$ with
$u_r=sv_{r+1}-w_r$, $r=0,1,\dots,k-1$. Here $v_r=(q^r-1)/(q-1)$, as usual.

\begin{definition}
We denote by $m_q^{(r)}(k,w)$ the maximal multiplicity of a  multiset in $\PG(k-1,q)$
such that every $r$-dimensional subspace 
has multiplicity at most $w$. In other words,
$m_q^{(r)}(k,w)$ is defined as the largest $n$ for which there exists an
$(n,w)^{(r)}$-arc in $\PG(k-1,q)$.
\end{definition}

Double-counting directly gives:
\begin{lemma}
  \label{lemma_one_weight_bound_gen_hamming_weight}
  $m_q^{(r)}(k,w)\le \frac{v_k}{v_{k-r}}\cdot w$
\end{lemma}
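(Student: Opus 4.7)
The plan is to apply a double count of weighted incidences between the points of $\PG(k-1,q)$, taken with their multiplicity in $\cK$, and the family $\mathcal{F}$ of subspaces featured in the definition of an $(n,w)^{(r)}$-arc. First I would fix such an arc $\cK$ of cardinality $n$ and note that every member of $\mathcal{F}$ contains the same number of projective points, namely $v_{k-r}$.

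Next, I would evaluate the weighted incidence sum $I := \sum_{S\in\mathcal{F}}\cK(S)$ in two orders. Summing first over $S$ and invoking the hypothesis $\cK(S)\le w$ gives $I\le |\mathcal{F}|\cdot w$. Summing first over points $P$ gives
$$I=\sum_{S\in\mathcal{F}}\sum_{P\in S}\cK(P)=\sum_{P}\cK(P)\cdot \lambda=n\lambda,$$
where $\lambda$ is the number of members of $\mathcal{F}$ through a fixed point. By the transitivity of $\mathrm{GL}_k(\F_q)$ on the points of $\PG(k-1,q)$, this $\lambda$ is independent of $P$, and a standard unweighted incidence count (i.e.\ counting pairs $(P,S)$ with $P\in S$ in two ways) yields $v_k\cdot\lambda=|\mathcal{F}|\cdot v_{k-r}$, equivalently $|\mathcal{F}|/\lambda=v_k/v_{k-r}$.

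Combining the two computations gives $n\lambda\le |\mathcal{F}|w$, hence $n\le (v_k/v_{k-r})\cdot w$, which is the claim. There is no essential obstacle: the only bookkeeping to verify is that the ratio $|\mathcal{F}|/\lambda$ really evaluates to $v_k/v_{k-r}$ in the convention used for ``$r$-dimensional subspace'', and this reduces to an elementary identity for Gaussian binomial coefficients.
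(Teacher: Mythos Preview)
Your argument is correct and is precisely the double-counting the paper invokes in its one-line proof. Your closing caveat about verifying that $|\mathcal{F}|/\lambda = v_k/v_{k-r}$ under the paper's dimension convention is well placed, but the method itself is exactly what is intended.
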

Taking the union of two multisets of points gives:
\begin{lemma}
 \label{lemma_sum}
  $m_q^{(r)}(k,w_r'+w_r'')\ge m_q^{(r)}(k,w_r')+m_q^{(r)}(k,w_r'')$
\end{lemma}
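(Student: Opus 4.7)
The plan is to exhibit an explicit witness by summing two extremal multisets in the same ambient space. Pick an $(n',w_r')^{(r)}$-arc $\mathcal{K}'$ with $n' = m_q^{(r)}(k,w_r')$ and an $(n'',w_r'')^{(r)}$-arc $\mathcal{K}''$ with $n'' = m_q^{(r)}(k,w_r'')$, both in $\PG(k-1,q)$. Then I define a new multiset $\mathcal{K}$ by pointwise addition of multiplicities, $\mathcal{K}(P) := \mathcal{K}'(P) + \mathcal{K}''(P)$ for each $P \in \mathcal{P}$.

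Verification is immediate from the additive extension of multiplicity to subsets: summing over $P \in \mathcal{P}$ yields $\mathcal{K}(\mathcal{P}) = n' + n''$, and for every $r$-dimensional subspace $S$ the same linearity gives $\mathcal{K}(S) = \mathcal{K}'(S) + \mathcal{K}''(S) \le w_r' + w_r''$. Thus $\mathcal{K}$ is a multiset of cardinality $n' + n''$ in $\PG(k-1,q)$ whose maximum $r$-subspace weight is bounded by $w_r' + w_r''$, which is exactly the required certificate for the lower bound $m_q^{(r)}(k, w_r'+w_r'') \ge n' + n''$.

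The only potential wrinkle is the clause in the definition of an arc requiring that some $r$-dimensional subspace attains the bound with equality; this is essentially formal and is the step I would double-check most carefully. It can be handled either by invoking monotonicity of $m_q^{(r)}(k,\cdot)$ in its weight argument (adding multiplicity at a single suitably chosen point preserves all relevant inequalities and raises the maximum $r$-subspace weight to equality), or by reading $m_q^{(r)}(k,w)$ as the maximum size of any multiset whose $r$-subspace weights are bounded by $w$. Neither reading poses a genuine obstacle, and the content of the lemma really amounts to subadditivity of the maximum $r$-subspace multiplicity under pointwise sum of multisets.
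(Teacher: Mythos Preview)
Your proof is correct and follows exactly the paper's approach: the paper states only ``Taking the union of two multisets of points gives'' before the lemma, and you have spelled out precisely those details. Your careful remark about the equality clause versus monotonicity is more than the paper bothers with, but harmless.
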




In what follows we shall need multisets induced by a projection from a subspace.
Let $\mathcal{K}$ be a multiset in $\PG(k-1,q)$.
Fix an $i$-dimensional subspace $\delta$ in
$\PG(k-1,q)$, with $\mathcal{K}({\delta})=t$.
Let further $\pi$ be a $j$-dimensional subspace in $\PG(k-1,q)$ of complementary dimension,
i.e. $i+j=k-2$ and $\delta\cap\pi=\varnothing$.
Define the projection $\varphi=\varphi_{\delta,\pi}$ from $\delta$ onto $\pi$ by
\begin{equation}
\label{eq:project}
\varphi\colon
\left\{\begin{array}{lll}
\mathcal{P}\setminus\delta\ & \rightarrow\ & \pi \\
Q & \rightarrow & \pi\cap\langle\delta,Q\rangle .
\end{array}\right.
\end{equation}
As before, $\mathcal{P}$ denotes the set of all points of $\PG(k-1,q)$.
Note that $\varphi$ maps $(i+s)$-subspaces containing
$\delta$ into $(s-1)$-subspaces in $\pi$.
Denote by $\mathcal{P}'$ the set of all points in $\pi$.
We define the induced multiset
$\mathcal{K}^{\varphi}:\mathcal{P}'\to\mathbb{N}_0$ by
\[\mathcal{K}^{\varphi}(Q)\,=
\,\sum_{P:\varphi(P)=Q} \mathcal{K}(P).\]
It is clear that
For every subspace $S$ in $\PG(k-1,q)$
that contains $\delta$, it holds
$\mathcal{K}^{\varphi}(\varphi(S))=\mathcal{K}(S)-t$.
In particular, if $\mathcal{K}$ is a $(n,w_r)^{(r)}$-arc (resp. $(n,u_r)^{(r)}$-minihyper)
then $\mathcal{K}^{\varphi}$ is an 
$(n-t,w_r-t)^{(r)}$-arc (resp. an $(n-t,u_r-t)^{(r)}$-minihyper).

\section{The Griesmer bound}
\label{sec_griesmer_bound}
It is well known that the Griesmer bound \cite{griesmer1960}:
\begin{equation}
  \label{eq_griesmer_bound}
  n\ge \sum_{i=0}^{k-1} \left\lceil\frac{d}{q^i}\right\rceil=:g_q(k,d).
\end{equation}
for the minimum length of an $[n,k]_q$ code with given minimum Hamming distance $d$ is attained if $d$ is sufficiently large. A similar estimate holds for the
$r$th generalized Hamming distance. This result was proved in
\cite{helleseth1992generalized,helleseth1995etal}. The corresponding bound is called {\em the generalized Griesmer 
bound}. Similarly to the classical Griesmer bound, this result is of purely geometric nature.
%

Below, we give a geometric proof of the generalized Griesmer bound.

\begin{theorem}
\label{thm_griesmer_gen_ham}
Let $C$ be an $[n,k,d_r]^{(r)}$-code, where $1\le r\le k$. Then
\begin{equation}
\label{eq:ggriesmer}
n\ge d_r+\sum_{i=1}^{k-r}\left\lceil\frac{d_r}{q^iv_r}\right\rceil.
\end{equation}
\end{theorem}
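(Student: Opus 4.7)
The plan is to translate the statement into the geometric language of Section~\ref{sec_preliminaries} and argue by induction on $k$ for fixed $r$. By \eqref{eq:w+d} applied with $r$ replaced by $k-r-1$, the hypothesis becomes: there is a multiset $\mathcal{K}$ in $\PG(k-1,q)$ of cardinality $n$ together with some $(k-1-r)$-dimensional subspace $\Pi^*$ realizing the maximum $(k-1-r)$-subspace multiplicity $n-d_r$. The base case $k=r$ is vacuous, since the sum on the right-hand side of \eqref{eq:ggriesmer} is then empty and $d_r\le n$ always holds.

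For the inductive step with $k>r$, the core observation is an averaging argument carried out \emph{outside} $\Pi^*$: the set $\mathcal{P}\setminus\Pi^*$ contains exactly $v_k-v_{k-r}=q^{k-r}v_r$ points and carries total multiplicity $n-(n-d_r)=d_r$, so one can pick a point $Q\notin\Pi^*$ with $\mathcal{K}(Q)\ge\lceil d_r/(q^{k-r}v_r)\rceil$. Now project from $Q$ onto a hyperplane $\pi\cong\PG(k-2,q)$ avoiding $Q$, using the projection machinery from Section~\ref{sec_preliminaries}. The induced multiset $\mathcal{K}^{\varphi}$ has cardinality $n':=n-\mathcal{K}(Q)$, and for any $(k-2-r)$-dimensional subspace $T\subset\pi$ the span $\langle Q,T\rangle$ is a $(k-1-r)$-dimensional subspace of $\PG(k-1,q)$ through $Q$, so the projection formula yields
\[
\mathcal{K}^{\varphi}(T)=\mathcal{K}(\langle Q,T\rangle)-\mathcal{K}(Q)\le(n-d_r)-\mathcal{K}(Q)=n'-d_r.
\]
Thus the maximum $(k-2-r)$-subspace multiplicity of $\mathcal{K}^{\varphi}$ is at most $n'-d_r$, which (again via \eqref{eq:w+d}) means that the $r$th generalized Hamming weight of $\mathcal{K}^{\varphi}$ is at least $d_r$.

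Applying the induction hypothesis to $\mathcal{K}^{\varphi}$ in $\PG(k-2,q)$ yields $n'\ge d_r+\sum_{i=1}^{k-1-r}\lceil d_r/(q^iv_r)\rceil$, and adding the lower bound $\mathcal{K}(Q)\ge\lceil d_r/(q^{k-r}v_r)\rceil$ reconstructs \eqref{eq:ggriesmer}. The step requiring genuine care is the averaging one: restricting to the complement of $\Pi^*$ is essential, because that is what makes the number of candidate points equal to $q^{k-r}v_r$, which is precisely the denominator appearing in the next Griesmer term. For $r=1$ one has $v_r=1$ and $q^{k-r}v_r=q^{k-1}$, so the argument specializes to the standard geometric proof of the classical Griesmer bound; in general, the identity $v_k-v_{k-r}=q^{k-r}v_r$ is what shapes the generalized version.
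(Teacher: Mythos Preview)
Your argument is correct and follows essentially the same route as the paper's own proof: both translate to multisets in $\PG(k-1,q)$, fix a $(k-1-r)$-subspace $\Pi^*$ of maximal multiplicity, average over the $v_k-v_{k-r}=q^{k-r}v_r$ points outside $\Pi^*$ to locate a point $Q$ with $\mathcal{K}(Q)\ge\lceil d_r/(q^{k-r}v_r)\rceil$, project from $Q$ onto a complementary hyperplane, and invoke the induction hypothesis on the resulting multiset in $\PG(k-2,q)$. Your write-up is in fact tidier than the paper's (which has a few notational slips); the only point you leave implicit---as does the paper---is the harmless reduction to full-length codes, which guarantees that the projected multiset still spans $\PG(k-2,q)$ and hence genuinely corresponds to an $[n',k-1]_q$ code.
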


\begin{proof}
The result is clearly true for $r=k$.
It is enough to prove it for codes of full length, i.e. $n=d_k$.

Let $C$ be an $[n,k,d_r]_q^{(r)}$ code and let $\cK$ be the $(n,w_{k-r-1})^{(k-r-1)}$-arc in $\PG(k-1,q)$ associated with it.
Consider a point of maximum multiplicity $w_0$. The total multiplicity of the remaining points is $n-w_0$, and there must be point among them of multiplicity at least
$\displaystyle \frac{n-w_0}{v_k-1}$. Hence
\[w_0\ge \left\lceil\frac{n-w_0}{qv_{k-1}}\right\rceil.\]
This is the Griesmer inequality for $r=k-1$ since $w_0=n-d_{k-1}$.

Without loss of generality, we shall assume that $C$ is not extendable in the following sense.
Every point in $\PG(k-1,q)$ is contained 
in a maximal subspace of dimension $r$.

Fix a $(k-r-1)$-dimensional subspace $S$ of maximal multiplicity, i.e. multiplicity $w_{k-r-1}$.
There exist $v_{k+1}-v_{k-r}$ points in $\PG(k-1,q)$ outside $S$. Hence there is a point 
$P\notin S$ of multiplicity 
\[t\ge\frac{n-w_{k-r-1}}{q^{k-r}v_r}=\frac{d_r}{q^{k-r}v_r}.\]
Consider a projection $\varphi$ from $P$ onto some hyperplane $H\cong\PG(k-2,q)$ with
$P\notin H$. The induced arc $\cK^{\varphi}$ has parameters
$(n-t,w_r-t)^{(r)}$.
The image of every subspace $T$ through $P$ is a subspace in $H$ of dimension $\dim T-1$. Moreover, 
\[\mathcal{K}^{(\varphi)}(\varphi(T))=\mathcal{K}(T)-t.\]
If we set $w_i'=\max_{T:\dim T=i} \cK^{\varphi}(T)$, we have 
$w_i'\le w_{i+1}-t$. If we also set
$d_i'=n-t-w'_{k-i-2}$, we get $d_r\le d_r'$.
By the induction hypothesis
\[n'\ge d'_{r-1}+\sum_{i=1}^{k-r-1} \lceil\frac{d'_{r-1}}{q^iv_{r-1 }}\rceil.
\]
Now using $d_r\le d'_r$, we obtain:
\begin{eqnarray*}
n'=n-t&=& d_r'+\sum_{i=1}^{k-r-1} \left\lceil\frac{d_i'}{q^iv_r}\right\rceil \\
      &=& d_r+\sum_{i=1}^{k-r-1} \left\lceil\frac{d_i}{q^iv_r}\right\rceil \\
\end{eqnarray*}
This implies
\begin{eqnarray*}
n&\ge & d_r+ t + \sum_{i=1}^{k-r-1} \left\lceil\frac{d_i}{q^iv_r}\right\rceil \\
 &\ge & d_r+\left\lceil\frac{d_r}{q^{k-r}v_r}\right\rceil+ \sum_{i=1}^{k-r-1} \left\lceil\frac{d_i}{q^iv_r}\right\rceil \\
 &=& d_r+ \sum_{i=1}^{k-r} \left\lceil\frac{d_i}{q^iv_r}\right\rceil.
\end{eqnarray*}
\end{proof}

We shall denote the left-hand side of (\ref{eq:ggriesmer}) by $g_r^{(r)}(k,d_r)$. The following theorem is well-known (cf.\ e.g.~\cite{rousseva2019linear})

\begin{theorem}
\label{thm:genweights}
Let $\cK$ be an $(n,n-d)$-arc in $\PG(k-1,q)$ and let $C$ be an
$[n,k,d]_q$ code associated with $\cK$, If $n=t+g_q(k,d)$, then
\begin{equation}
w_j=t+\sum_{i=k-1-j}^{k-1} \left\lceil\frac{d}{q^i} \right\rceil
\end{equation}
\end{theorem}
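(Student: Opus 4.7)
The plan is to recast the theorem as a statement about the weight hierarchy of the associated code $C$ via the duality $w_j + d_{k-1-j} = n$ from~(\ref{eq:w+d}). Combined with $n = t + g_q(k,d)$, the claim becomes equivalent to $d_r = g_q(r,d)$ for every $1 \le r \le k-1$. I would then prove the two inequalities $d_r \ge g_q(r,d)$ and $d_r \le g_q(r,d)$ separately.

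The inequality $d_r \ge g_q(r,d)$ is the easy direction. For any $r$-dimensional subcode $C' \le C$, every nonzero codeword of $C'$ is a nonzero codeword of $C$, so the minimum distance of $C'$ viewed as a code on its own support is at least $d$. Applying the classical Griesmer bound to this length-$|\supp(C')|$ code gives $|\supp(C')| \ge g_q(r,d)$; minimizing over $C'$ yields $d_r \ge g_q(r,d)$, or equivalently $w_j \le t + \sum_{i=k-1-j}^{k-1}\lceil d/q^i\rceil$.

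For the matching inequality I would induct on $k$. The base case $k=2$ is immediate. For the inductive step I would select a hyperplane $H$ satisfying $\cK(H)=n-d$ and containing a $(k-3)$-dimensional subspace of multiplicity $w_{k-3}$. By the upper bound just established, $w_{k-3} \le (n-d)-\lceil d/q\rceil$, so the maximum hyperplane multiplicity of the restricted arc $\cK|_H$ inside $H \cong \PG(k-2,q)$ equals $w_{k-3}$, forcing the associated $[n-d,k-1]_q$ code to have minimum distance exactly $\lceil d/q\rceil$. Since $n-d = t + g_q(k-1,\lceil d/q\rceil)$, $\cK|_H$ is itself a Griesmer-excess-$t$ arc, and the inductive hypothesis pins down its $j$-subspace multiplicities. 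These are $j$-subspaces of $\PG(k-1,q)$ as well, giving the desired lower bound $w_j \ge t + \sum_{i=k-1-j}^{k-1}\lceil d/q^i\rceil$, and a direct simplification shows the two values coincide.

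The main technical hurdle is producing the hyperplane $H$ in the inductive step: one that both has multiplicity $n-d$ and contains a maximum-multiplicity $(k-3)$-subspace. The natural attack is a pencil argument: fixing a maximum-multiplicity $(k-3)$-subspace $S$, the $q+1$ hyperplanes through $S$ carry total multiplicity $q\cK(S) + n$, and contrasting this with the upper bound on $w_{k-3}$ from the easy direction should force at least one hyperplane in the pencil to reach $n-d$. The arithmetic of the $\lceil d/q^i\rceil$ terms is delicate, and making the argument robust in the slack case $t>0$ (where the upper bound on $w_{k-3}$ may be strict) is where I expect the most work; if a direct pencil argument falls short one can instead analyze the dual minihyper $s\mathbf{1} - \cK$ with $s = w_0$, whose structure is very rigid for arcs meeting the generalized Griesmer bound.
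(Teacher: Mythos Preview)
First, note that the paper does not give its own proof of Theorem~\ref{thm:genweights}; it is stated as ``well-known'' with a citation to~\cite{rousseva2019linear}, so there is no in-paper argument to compare against.

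More importantly, the equality you are trying to prove is \emph{false} for $t>0$. Take the eight distinct points $e_1,e_2,e_3,e_4,\,e_1{+}e_2,\,e_3{+}e_4,\,e_1{+}e_3,\,e_2{+}e_4$ in $\PG(3,2)$. One checks that every plane contains at most five of them, so this is a projective $[8,4,3]_2$ code with $w_0=1$. Here $g_2(4,3)=7$, hence $t=1$, and the theorem's formula predicts $w_0=1+\lceil 3/8\rceil=2$. So the lower bound $w_j\ge t+\sum\lceil d/q^i\rceil$ that you propose to establish by induction cannot hold in general; no pencil argument or minihyper analysis will close that gap when $t>0$. The statement should be read either as the inequality $w_j\le t+\sum_{i=k-1-j}^{k-1}\lceil d/q^i\rceil$ for arbitrary $t$, or as an equality only in the Griesmer case $t=0$ --- which is the only case the paper actually invokes, in deriving~(\ref{eq:w}).

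Your ``easy direction'' $d_r\ge g_q(r,d)$ is correct and is precisely the proof of the inequality version.

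For the equality at $t=0$, your inductive scheme does go through, and the hurdle you worry about evaporates: you do \emph{not} need the maximal hyperplane $H$ to contain a maximal $(k-3)$-subspace. Pick any hyperplane with $\cK(H)=n-d$. The restriction $\cK|_H$ has cardinality $n-d=g_q\!\left(k-1,\lceil d/q\rceil\right)$ and spans $H$ (otherwise $w_{k-3}\ge n-d$, contradicting your upper bound). Writing $w'$ for the maximum $(k-3)$-subspace multiplicity inside $H$, your upper bound gives $w'\le w_{k-3}\le (n-d)-\lceil d/q\rceil$, while the classical Griesmer bound applied to the $(k-1)$-dimensional restricted code forces $w'\ge (n-d)-\lceil d/q\rceil$. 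Hence $\cK|_H$ is automatically a Griesmer arc with minimum distance exactly $\lceil d/q\rceil$, and the inductive hypothesis delivers the required lower bounds on all $w_j$ with $j\le k-3$.
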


Let $k\ge1$ and let $d$ be a positive integer.
The integer $d$ can be written uniquely as
\begin{equation}
\label{eq:d}
d=\sigma q^{k-1}-\sum_{i=0}^{k-2} \varepsilon_iq^i
\end{equation}
It is easily computed that
\begin{equation}
\label{eq:n}
g_q^{(1)}(k,q)=\sigma v_k-\sum_{i=0}^{k-2} \varepsilon_iv_{i+1}.
\end{equation}
Let $C$ be an $[n=g_q(k,d),k,d]_q^{(1)}$-code and let $\cK$ an arc
$(n,n-d)$-arc in $\PG(k-1,q)$ associated with $C$. By Theorem~\ref{thm:genweights},

\begin{equation}
\label{eq:w}
w_{k-1-j}=\sigma v_{k-j}-\sum_{i=j}^{k-2} \varepsilon_iv_{i+1-j}.
\end{equation}

Since the maximal multiplicity of a point with respect to $\cK$ is $\sigma$,
$\mathcal{K}$ can be represented as
$\mathcal{K}=\sigma\chi_{\mathcal{P}}-\mathcal{F}$, where $\mathcal{F}$ is a minihyper with
parameters
\begin{equation}
\label{eq:minihyp}
\left(\sum_{i=0}^{k-2} \varepsilon_iv_{i+1},\sum_{i=0}^{k-2} \varepsilon_iv_{i}
\right)
\end{equation} 
with maximal multiplicity of a point not exceeding $\sigma$.
Minihypers with the above parameters, but without the restriction on the maximal point multiplicity,
can always  can be constructed as the sum of subspaces: 
$\varepsilon_{k-2}$ hyperplanes, $\varepsilon_{k-3}$ hyperlines, and so on, $\varepsilon_1$ lines and $\varepsilon_0$ points.
Such minihypers are called \emph{canonical}.

A canonical minihyper $\mathcal{F}$ in $\PG(k-1,q)$ is said to be of type
$\sum_{i=1}^{k-2} \varepsilon_{i}[i-1]$ if it is the sum $\varepsilon_{k-2}$ hyperplanes,
$\varepsilon_{k-3}$-hyperlines, and so on $\varepsilon_2$ lines, and  $\varepsilon_1$ points.
In other words, $\mathcal{F}=\sum_i\chi_{S_i}$, where exactly $\varepsilon_j$ of the subspaces $S_i$ are of (projective) dimension $j-1$. The arc $\cK$ obtained by subtracting $\mathcal{F}$ from $s$ copies of $\PG(K-1,q)$ will be said to have type
$s[k-1]-\sum_{i=1}^{k-2}[\varepsilon_{i}]$.

Minihypers with parameters
\begin{equation}
\label{eq:minihyp2}
\left(\sum_{i=r}^{k-2} \varepsilon_iv_{i+1},\sum_{i=r}^{k-2} \varepsilon_iv_{i+1-r}
\right)^{(k-1-r)}
\end{equation}
can be constructed in a similar fashion as canonical minihypers, i.e. as the sum of 
$\varepsilon_{k-2}$ hyperplanes, $\varepsilon_{k-3}$ hyperlines, and so on, $\varepsilon_r$ subspaces of dimension $r$.

\begin{theorem}
\label{thm:minihyp-griesmer}
Let $\mathcal{F}$ be a minihyper in $\PG(k-1,q)$ with 
parameters given by (\ref{eq:minihyp2}), $0\le\varepsilon_\le q-1$, and with maximal point
multiplicity $w_0$. Then for every $\sigma\ge w_0$ the multise
$\mathcal{K}=\sigma-\mathcal{F}$ is a (multi)arc in $\PG(k-1,q)$ with parameters
\[\left(n=\sigma v_k-\sum_{i=r}^{k-2} \varepsilon_iv_{i+1},
w_{k-r-1}=\sigma v_{k-r}-\sum_{i=r}^{k-2} \varepsilon_iv_{i+1-r} \right)^{(k-1-r)}.
\]
The code associated with $\mathcal{K}$ is a Griesmer code with respect to the
$r$th generalized Hamming weight.
\end{theorem}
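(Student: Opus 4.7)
The plan is to verify the three parts of the statement in sequence: that $\mathcal{K}:=\sigma\chi_{\mathcal{P}}-\mathcal{F}$ is a well-defined multiset, that it is an arc with the claimed parameters, and finally that the associated code attains the generalized Griesmer bound from Theorem~\ref{thm_griesmer_gen_ham}.

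The first two points follow directly from the preliminaries. The pointwise non-negativity of $\mathcal{K}(P)=\sigma-\mathcal{F}(P)$ is exactly the hypothesis $\sigma\ge w_0$, and summing over the $v_k$ points of $\PG(k-1,q)$ gives $|\mathcal{K}|=\sigma v_k-|\mathcal{F}|=\sigma v_k-\sum_{i=r}^{k-2}\varepsilon_i v_{i+1}=n$. For any $(k-r-1)$-subspace $S$ one has $\mathcal{K}(S)=\sigma v_{k-r}-\mathcal{F}(S)$, so maximising $\mathcal{K}(S)$ is the same as minimising $\mathcal{F}(S)$; because $\mathcal{F}$ is a minihyper with the parameters prescribed by (\ref{eq:minihyp2}), this minimum is exactly $\sum_{i=r}^{k-2}\varepsilon_i v_{i+1-r}$ and is attained on at least one $S$, which gives the claimed value of $w_{k-r-1}$.

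For the Griesmer statement I read off the $r$-th generalized weight from (\ref{eq:w+d}) as $d_r=n-w_{k-r-1}$. Repeated use of $v_a-v_b=q^b v_{a-b}$ for $a\ge b$ collapses the difference of the two explicit formulas to
\[d_r=v_r\left(\sigma q^{k-r}-\sum_{i=r}^{k-2}\varepsilon_i q^{i+1-r}\right)=qv_r\hat d,\qquad \hat d:=\sigma q^{k-r-1}-\sum_{i=r}^{k-2}\varepsilon_i q^{i-r}.\]
Consequently $\lceil d_r/(q^j v_r)\rceil=\lceil\hat d/q^{j-1}\rceil$ for $j\ge 1$, so after an index shift the generalized Griesmer sum in (\ref{eq:ggriesmer}) becomes the classical Griesmer sum $g_q(k-r,\hat d)$. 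The hypothesis $0\le\varepsilon_i\le q-1$ makes the closed form (\ref{eq:n}) applicable with leading coefficient $\sigma$ and correction digits $\varepsilon'_j:=\varepsilon_{j+r}$, evaluating this sum to $\sigma v_{k-r}-\sum_{i=r}^{k-2}\varepsilon_i v_{i+1-r}$. Adding $d_r$ and cancelling the two matching sums reproduces exactly $n=\sigma v_k-\sum_{i=r}^{k-2}\varepsilon_i v_{i+1}$, which is the asserted equality $n=g_q^{(r)}(k,d_r)$.

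The only genuine difficulty is bookkeeping: I have to keep the index shift $j\mapsto j-1$ straight when matching the generalized Griesmer sum against $g_q(k-r,\hat d)$, and I have to verify that $\hat d$ is indeed in the canonical form required by (\ref{eq:n}), which is where the bound $\varepsilon_i\le q-1$ on the coefficients is used. Everything else is an algebraic consequence of the identity $v_a-v_b=q^b v_{a-b}$ together with the multiset identities already recorded in Section~\ref{sec_preliminaries}.
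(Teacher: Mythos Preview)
Your proof is correct and follows essentially the same approach as the paper: both verify the arc parameters directly from the minihyper data and then carry out an algebraic check that $g_q^{(r)}(k,d_r)=n$. Your organization of the Griesmer verification---factoring $d_r=qv_r\hat d$ and reducing the generalized sum to the classical $g_q(k-r,\hat d)$ via formula~(\ref{eq:n})---is a clean packaging of the same term-by-term computation the paper spells out explicitly.
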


\begin{proof}
The parameters of $\mathcal{K}$
are obvious. Furthermore 
\[d_r=n-w_{k-r-1}=v_r(\sigma q^{k-r}-\sum_{i=r}^{k-2}\varepsilon_iq^{i-r+1}).\]
Now it is a straightforward check that:
\begin{eqnarray*}
g_q^{(r)}(k,r) &=& d_r+\lceil\frac{d_r}{qv_r}\rceil+\cdots+\lceil\frac{d_r}{q^{k-r}v_r}\rceil\\
 &=& v_r(\sigma q^{k-r}-\sum_{i=r}^{k-2}\varepsilon_iq^{i-r+1}) + \\
  && \sigma q^{k-r-1}-\sum_{i=r}^{k-2}\varepsilon_i^{i-r}+ \\
  && \sigma q^{k-r-2}-\sum_{i=r+1}^{k-2}\varepsilon_i^{i-r-1} +\ldots +\sigma \\
  &=& \sigma v_k-\sum_{i=r}^{k-2} \varepsilon_iv_{i+1} \\
  &=& n
\end{eqnarray*}
\end{proof}

If the maximal point multiplicity of $\mathcal{F}$ with parameters given by
(\ref{eq:minihyp2}) is large, say at least
$1+\sum_{i=r}^{k-2} i\varepsilon_i$,
such minihypers can always be constructed. Thus we have the following theorem, which is well-known, and essentially due to Solomon and Stiffler \cite{solomonstiffler1965}.

\begin{theorem}
\label{thm:larged}
If $d_r$ is large enough Griesmer 
$[n=g_q^{(r)}(k,d_r)]_q^{(r)}$-codes do exist for all $q, k$ and $r$.
\end{theorem}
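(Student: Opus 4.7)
My plan is to apply Theorem~\ref{thm:minihyp-griesmer} with a suitably constructed canonical minihyper, after writing $d_r$ in the appropriate Griesmer form. The key insight is that, for fixed $q$, $k$, and $r$, the ``shape'' of the canonical minihyper is governed by digits $\varepsilon_i \in \{0,\ldots,q-1\}$ whose number is bounded by $k-r-1$, while the parameter $\sigma$ grows linearly in $d_r$; so once $d_r$ is big enough, $\sigma$ dominates the maximum point multiplicity of the minihyper.

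Concretely, I would first write $d_r$ in the form
$d_r = v_r\bigl(\sigma q^{k-r} - \sum_{i=r}^{k-2}\varepsilon_i q^{i-r+1}\bigr)$
with $\sigma \geq 0$ and $\varepsilon_i \in \{0,\ldots,q-1\}$; this is essentially the $q$-ary expansion (with borrows) of $d_r/v_r$ and is precisely the representation used in the proof of Theorem~\ref{thm:minihyp-griesmer}. Next I would construct a canonical minihyper $\mathcal{F}$ with parameters~\eqref{eq:minihyp2} as a sum of $\varepsilon_i$ subspaces of projective dimension $i$, for $i = r, \ldots, k-2$, chosen arbitrarily. The total number of summed subspaces is $N := \sum_{i=r}^{k-2}\varepsilon_i \leq (k-r-1)(q-1)$, which depends only on $q$, $k$, $r$. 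Since each point of $\PG(k-1,q)$ lies in at most $N$ of the summed subspaces, the maximum point multiplicity $w_0$ of $\mathcal{F}$ is at most $N$.

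Finally, for $d_r$ large enough we have $\sigma \geq N \geq w_0$, and then Theorem~\ref{thm:minihyp-griesmer} applied to this $\mathcal{F}$ produces the desired Griesmer code as the arc $\mathcal{K} = \sigma \chi_{\mathcal{P}} - \mathcal{F}$. The main point to be careful about is the interpretation of ``large enough'': the canonical construction only yields $d_r$ of the specific shape displayed above (in particular $v_r \mid d_r$), so the theorem is really the statement that for every $d_r$ in the image of this Griesmer expansion that is sufficiently large, a Griesmer code exists. A more refined general-position placement of the summed subspaces would sharpen the threshold at which $\sigma \geq w_0$ kicks in (matching the hint ``$\sigma \geq 1 + \sum_{i=r}^{k-2} i\varepsilon_i$'' given before the theorem), but is not needed for the qualitative assertion.
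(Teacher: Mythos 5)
Your argument is essentially the paper's own proof: the theorem is justified there by precisely the remark that minihypers with parameters (\ref{eq:minihyp2}) can be built canonically as sums of $\varepsilon_i$ subspaces, whose maximal point multiplicity is bounded by a quantity depending only on $q$, $k$, $r$, so that for large $d_r$ one has $\sigma\ge w_0$ and Theorem~\ref{thm:minihyp-griesmer} yields the desired Griesmer arc $\sigma\chi_{\mathcal{P}}-\mathcal{F}$. Your closing caveat, that this Solomon--Stiffler construction only reaches values $d_r$ of the special displayed form (so ``large enough'' must be read accordingly), is an accurate reading of what the construction delivers and is a restriction the paper itself leaves unaddressed, remedying it only partially later via Theorem~\ref{thm:grriesmer1}.
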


Next we are going to demonstrate that if a code attains the classical Griesmer bound, it attains 
also the Griesmer bound for all generalized weights.

\begin{theorem}
\label{thm:grriesmer1}
Let $C$ be an $[n=g_q(k,d),k,d]_q^{(1)}$-code where $d^{(1)}=d$ is given by (\ref{eq:d}).
Then 
\[d_r(C)=v_r\cdot\left(\sigma q^{k-r}-\sum_{i=r}^{k-1} \varepsilon_{i-1}q^{i-r}\right)-
\sum_{i=1}^{r-1} \varepsilon_{i-1}v_i,\]
and $C$ attains the Griesmer bound for the $r$th generalized Hamming weight, i.e.
$n=g_q^{(r)}(k,d_r)$.
\end{theorem}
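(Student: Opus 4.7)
The plan is in two parts: first derive the closed-form expression for $d_r(C)$, then verify that the classical Griesmer equality $n = g_q(k,d)$ forces the generalized Griesmer equality $n = g_q^{(r)}(k, d_r)$.

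The first part is essentially immediate. Since $n = g_q(k,d)$, Theorem~\ref{thm:genweights} applies with $t = 0$, so equation~(\ref{eq:w}) at $j = r$ gives $w_{k-r-1} = \sigma v_{k-r} - \sum_{i=r}^{k-2} \varepsilon_i v_{i+1-r}$. Combining this with $d_r(C) = n - w_{k-r-1}$ from (\ref{eq:w+d}) and (\ref{eq:n}) for $n$, then applying the identity $v_{i+1} - v_{i+1-r} = q^{i+1-r} v_r$ (a consequence of $v_k = v_r q^{k-r} + v_{k-r}$), a routine re-indexing produces the claimed formula for $d_r(C)$.

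For the second part, I would compute each ceiling $\lceil d_r/(q^j v_r)\rceil$ for $j = 1, \ldots, k-r$ in closed form. Writing $d_r = v_r N - M$ with
\[
N = \sigma q^{k-r} - \sum_{i=r-1}^{k-2} \varepsilon_i q^{i+1-r},
\qquad
M = \sum_{i=0}^{r-2} \varepsilon_i v_{i+1},
\]
the critical observation is the bound $0 \le M < v_r$, which follows from $\varepsilon_i \le q-1$ and the telescoping identity $(q-1)\sum_{i=1}^{r-1} v_i = v_r - r$. Dividing $N = q^j A_j + B_j$ with $0 \le B_j < q^j$ and using $M < v_r$ then forces $\lceil d_r/(q^j v_r)\rceil$ to equal $A_j + 1$ when $B_j > 0$ and $A_j$ otherwise. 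Reading off $B_j$ from the base-$q$ expansion of $N$, both cases combine into the single formula
\[
\left\lceil \frac{d_r}{q^j v_r} \right\rceil = \sigma q^{k-r-j} - \sum_{i=r+j-1}^{k-2} \varepsilon_i q^{i+1-r-j}.
\]

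Finally, summing over $j$, swapping the order in the resulting double sum, and repeatedly applying $v_{i+1} = v_r q^{i+1-r} + v_{i+1-r}$ collapses everything to $\sigma v_k - \sum_{i=0}^{k-2} \varepsilon_i v_{i+1}$, which by (\ref{eq:n}) is precisely $g_q(k,d) = n$. The main obstacle is the ceiling computation: the whole step rests on the bound $M < v_r$, after which the closed form for $\lceil d_r/(q^j v_r)\rceil$ is a short exercise in base-$q$ arithmetic and the concluding summation is purely mechanical.
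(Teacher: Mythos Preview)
Your proposal is correct and follows essentially the same approach as the paper: both derive the formula for $d_r$ by subtracting the expression~(\ref{eq:w}) for $w_{k-1-r}$ from $n$ and using $v_{a+r}-v_a=q^a v_r$, and both reduce the second assertion to a bound on the ``remainder'' $M=\sum_{i=1}^{r-1}\varepsilon_{i-1}v_i$. The paper states this bound (in a slightly looser form) and then declares the verification of $g_q^{(r)}(k,d_r)=g_q^{(1)}(k,d_1)$ a ``straightforward check'', whereas you actually carry out that check by computing each ceiling $\lceil d_r/(q^j v_r)\rceil$ in closed form via the base-$q$ expansion of $N$; so your write-up is more detailed but not substantively different.
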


\begin{proof}
By (\ref{eq:w}) and (\ref{eq:w+d}), and using the obvious
$v_{i+r}-v_i=q^iv_r$, we get that
\begin{eqnarray*}
d_r &=& n-w_{k-1-r} \\
    &=& \left(\sigma v_k-\sum_{i=1}^{k-1} \varepsilon_{i-1}v_i\right)-
\left(\sigma v_{k-r}-\sum_{i=r}^{k-1} \varepsilon_{i-1}v_{i-r}\right) \\
    &=& \sigma(v_k-v_{k-r})-\sum_{i=r}^{k-1} \varepsilon_{i-1}(v_i-v_{i-r})-
        \sum_{i=1}^{r-1} \varepsilon_{i-1}v_i \\
    &=& v_r\cdot\left(\sigma q^{k-r}-\sum_{i=r}^{k-1} \varepsilon_{i-1}q^{i-r}\right)-
\sum_{i=1}^{r-1} \varepsilon_{i-1}v_i.
\end{eqnarray*}
Let us note that
\[\sum_{i=1}^r \varepsilon_{i-1}v_i\le(q-1)\sum_{i=1}^r v_i<(q-1)v_r+v_r=qv_r.\]
Now it is a straightforward check that
\[g_q^{(r)}(k,d_r)=g^{(1)}(k,d_1).\]
\end{proof}

\section{Exact values}
\label{sec_exactvalues}

In this section, we tackle the problem finding the exact value of 
$m_q^{(r)}(k-1,w)$ for fixed $k, r,  w$ and $q$. 
It is in general hard to determine the values $m_q^{(r)}(k-1,w)$, but 
there is one easy case for which the result is obvious:

\begin{proposition}
  We have $m_q^{(0)}(k-1,s)=sv_k$ for each $k\ge 1$ and each $s\ge 1$.
\end{proposition}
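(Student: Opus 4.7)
The plan is to observe that when $r = 0$ the defining condition of an $(n,s)^{(0)}$-arc collapses to a purely pointwise constraint, after which the proposition becomes essentially tautological. Indeed, a $0$-dimensional projective subspace is nothing more than a single point, so requiring $\mathcal{K}(S) \le s$ for every $0$-dimensional subspace $S$ is exactly the requirement that $\mathcal{K}(P) \le s$ for every point $P$ of the ambient projective space. I do not anticipate any genuine difficulty; the proposition serves as a sanity-check of the notation at the boundary case $r = 0$.

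For the upper bound I would simply sum the pointwise inequality over all points of the ambient projective space. With $v_k$ points available and each carrying multiplicity at most $s$, the total multiplicity of any admissible multiset satisfies $n \le s v_k$, which is the desired inequality. No non-trivial counting argument or induction is needed here.

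For the matching lower bound I would exhibit an explicit extremal construction: take the constant multiset $\mathcal{K}$ that assigns multiplicity exactly $s$ to every point of the ambient projective space. Then $\mathcal{K}$ has total multiplicity $s v_k$ and, by construction, every point — hence every $0$-dimensional subspace — has multiplicity exactly $s$. Thus $\mathcal{K}$ is an $(sv_k, s)^{(0)}$-arc, and the upper bound is attained. Combining the two halves gives the stated equality $m_q^{(0)}(k-1, s) = s v_k$.
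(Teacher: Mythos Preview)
Your argument is correct and is exactly the obvious one the paper has in mind; the paper does not even write out a proof for this proposition, simply flagging it as the easy boundary case. Your identification of $0$-dimensional subspaces with points, the summation for the upper bound, and the constant-multiplicity multiset for the lower bound together constitute precisely the intended reasoning.
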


The case $r=1$ is of special interest. This is the problem of determinig the largest size of a generalized cap -- a (multi)set of points such that each line has multiplicity at most $w$.
In particular if $w=2$ this is the notorious maximal cap problem. 

From the connection between the linear $[n,k]_q$ codes and the multisets of points in
$\PG(k-1,q)$, we get that $m^{(r)}_q(k-1,w)$ is the largest integer $n$ such that 
$n\ge g_q^{(k-r-1)}(k,n-w)$. This integer will be called 
the \emph{Griesmer upper bound} for $m^{(r)}_q(k-1,w)$.

Another bound on $m_q^{(r)}(k-1)$ is the following.
Set  $s_r=w$ and $s_{r+i}=m_q^{(r+i-1)}(r+i,s_r)$, $i=0,\ldots,k-1-r$. Then 
$m_q^{(r)}(k-1)\le s_{k-1}$. We call this bound
the \emph{coding upper bound} for $m^{(r)}_q(k-1,w)$.
In other words, the coding upper bound uses recursively
the parameters of optimal multiarcs, which in turn can be obtained from the parameters of the
optimal linear codes.

\begin{example}
  For $m_2^{(4)}(6,21)$ the Griesmer upper bound is $81$ since
 \[81\ge g_2^{(2)}(7,60)=60+\left\lceil\frac{60}{6}\right\rceil+
 \left\lceil\frac{60}{12}\right\rceil+\left\lceil\frac{60}{24}\right\rceil+\left\lceil\frac{60}{48}\right\rceil+
 \left\lceil\frac{60}{96}\right\rceil=81,\]
 while
  \[82< g_2^{(2)}(7,61)=61+\left\lceil\frac{61}{6}\right\rceil+
 \left\lceil\frac{61}{12}\right\rceil+\left\lceil\frac{61}{24}\right\rceil+\left\lceil\frac{61}{48}\right\rceil+
 \left\lceil\frac{61}{96}\right\rceil=84.\]

  The coding upper bound is $77$. Here $s_4=21$, $s_5=m_2^{(4)}(5,21)=39$ since there exists a binary $[39,6,18]$-code and there is no $[40,6,19]$-code (cf. Grassl's tables 
  \cite{grassl-tables}).
Furthermore, $s_6=m_2^{(5)}(6,39)=77$ since there exists a $[77,6,38]_2$-code and there is no
$[78,6,39]_2$-code.
\end{example}


In Subsection~\ref{subsec_binary} we determine the exact values of $m^{(r)}_2(k-1,w)$ for all $k\le 7$. We remark that $m^{(6)}_2(7,w)$ is completely known 
(by the fact that we know the optimal lengths of all binary 8-dimensional codes)
while there only partial results for $m^{(7)}_2(8,w)$. However, the determination of $m^{(6)}_2(7,w)$ is scattered in many papers, so that we do not attempt to determine $m^{(r)}_2(8,w)$ here. In Subsection~\ref{subsec_ternary} we determine the exact values of 
$m^{(r)}_3(k-1,w)$ for all $k\le 5$.

\subsection{Exact values for $m_2^{(r)}(k-1,w)$}
\label{subsec_binary}

\begin{proposition}
  \label{prop_n_2_4_2_s}
  It holds:
  \begin{enumerate}[(a)]
  \item $m_2^{(1)}(3,3t+2)=15t+8$, 
  \item $m_2^{(1)}(3,3t+3)=15t+15$, 
  \item $m_2^{(1)}(3,3t+4)=15t+16$,
\end{enumerate}
for all $t\in N$.
\end{proposition}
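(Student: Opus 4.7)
The plan is to establish matching upper and lower bounds for each of the three cases. The upper bounds will come from the Griesmer bound of Theorem~\ref{thm_griesmer_gen_ham} applied at $r=2$, $k=4$, $q=2$; the lower bounds will come from very simple multiset constructions built from the whole space $\chi_\mathcal{P}$ together with an ovoid of $\PG(3,2)$.

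For the upper bound, I would observe that any $(n,w)^{(1)}$-arc in $\PG(3,2)$ corresponds via the standard bijection to a linear $[n,4]_2$-code whose second generalized Hamming weight satisfies $d_2\ge n-w$ by~(\ref{eq:w+d}). Theorem~\ref{thm_griesmer_gen_ham} with $r=2$ then forces
\[
n \;\ge\; d_2 + \left\lceil \tfrac{d_2}{6}\right\rceil + \left\lceil \tfrac{d_2}{12}\right\rceil,
\]
and since the right-hand side is monotone nondecreasing in $d_2$, it suffices in each case to substitute the smallest $n$ exceeding the claimed value and check that the inequality fails. In case (a) this amounts to checking that for $n=15t+9$ the right-hand side evaluates to $15t+10$; cases (b) and (c) both reduce to checking that $d_2=12t+13$ yields $15t+18$.

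For the lower bound the essential building block is that the complement $\cK_0=\chi_\mathcal{P}-\chi_H$ of any hyperplane $H\cong\PG(2,2)$ in $\PG(3,2)$ is an $(8,2)$-arc: it carries $v_4-v_3=8$ points, and every line of $\PG(3,2)$ either lies in $H$ or meets $H$ in exactly one point, contributing $0$ or $2$ to the line multiplicity, respectively. Using Lemma~\ref{lemma_sum} to combine $\cK_0$ with multiples of $\chi_\mathcal{P}$, one obtains in case (a) the multiset $t\,\chi_\mathcal{P}+\cK_0$; in case (b) the multiset $(t+1)\,\chi_\mathcal{P}$; and in case (c) the multiset $(t+1)\,\chi_\mathcal{P}+\chi_P$ for an arbitrary point $P$. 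Each realises the claimed cardinality with the claimed maximum line multiplicity.

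I do not expect a significant obstacle. The only conceptual step is recognising the hyperplane complement in $\PG(3,2)$ as an $(8,2)$-arc, which is immediate from line--hyperplane incidence. Everything else is bookkeeping: verifying in the three cases that the Griesmer bound at $r=2$, $k=4$ matches the naive sum construction, which it does, with equality in cases (a) and (b) and slack one in case (c).
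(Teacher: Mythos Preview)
Your proof is correct and follows essentially the same approach as the paper: the Griesmer upper bound (Theorem~\ref{thm_griesmer_gen_ham}) for the ceiling, and the affine solid / full solid / solid-plus-point constructions combined via Lemma~\ref{lemma_sum} for the floor. One terminological slip: in your opening sentence you call the building block an ``ovoid of $\PG(3,2)$'', but an ovoid there has $q^2+1=5$ points, not $8$; the object you actually use---the hyperplane complement $\chi_{\mathcal P}-\chi_H$---is exactly what the paper calls an \emph{affine solid}, and your detailed argument for it is correct.
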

\begin{proof}
  The upper bounds are given by the Griesmer upper bound. Constructions for 
  $m_2^{(1)}(3,2)\ge 8$,
  $m_2^{(1)}(3,3)\ge 15$, and $m_2^{(1)}(3,4)\ge 16$ are given by an affine solid, 
  a solid, and a solid plus a point, respectively. Combining those examples with $t$ copies of a solid yields the remaining upper bounds by Lemma~\ref{lemma_sum}.
\end{proof}

\begin{table}[htp]
  \begin{center}
    \begin{tabular}{rrcc}
      \hline 
      $w$ & $m_2^{(1)}(3,w)$ & construction & upper bound \\ 
      \hline
      2 &  8 & affine solid & Griesmer upper bound \\ 
      3 & 15 & solid & Griesmer upper bound \\ 
      4 & 16 & solid plus a point & Griesmer upper bound \\ 
      \hline
    \end{tabular}  
    \caption{Exact values for $m_2^{(1)}(3,w)$.}
    \label{table_n_2_4_2_s}
  \end{center}
\end{table}

\begin{corollary}
  $m_2^{(1)}(3,w)$ is given by the Griesmer upper bound for all $w\ge 2$.
\end{corollary}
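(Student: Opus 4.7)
The plan is to deduce the corollary directly from Proposition~\ref{prop_n_2_4_2_s}. Every integer $w\ge 2$ can be written uniquely in one of the three forms $3t+2$, $3t+3$, or $3t+4$ for a nonnegative integer $t$, so it suffices to verify, in each residue class, that the value of $m_2^{(1)}(3,w)$ determined by Proposition~\ref{prop_n_2_4_2_s} coincides with the Griesmer upper bound defined in Section~\ref{sec_exactvalues}.

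First I would make the Griesmer upper bound completely explicit. For $q=2$, $k=4$, and $r=1$ one has $g_2^{(2)}(4,d)=d+\lceil d/6\rceil+\lceil d/12\rceil$, so the Griesmer upper bound for $m_2^{(1)}(3,w)$ equals $w+d_{\max}(w)$, where $d_{\max}(w)$ is the largest integer $d$ satisfying $\lceil d/6\rceil+\lceil d/12\rceil\le w$. A short case analysis on $w\bmod 3$ then yields
\[ d_{\max}(3t+2)=12t+6,\qquad d_{\max}(3t+3)=12t+12,\qquad d_{\max}(3t+4)=12t+12, \]
and hence Griesmer upper bounds $15t+8$, $15t+15$, and $15t+16$, respectively---exactly the values asserted by Proposition~\ref{prop_n_2_4_2_s}.

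Since the proof of Proposition~\ref{prop_n_2_4_2_s} already exhibits explicit constructions---an affine solid, a solid, or a solid plus a point, each combined via Lemma~\ref{lemma_sum} with $t$ copies of a solid---that realise these numbers as lower bounds, the Griesmer upper bound is attained for every $w\ge 2$ and the corollary follows. The only mildly delicate point is verifying that $\lceil d/6\rceil+\lceil d/12\rceil$ strictly exceeds $w$ at $d=d_{\max}(w)+1$ in each residue class, so that the Griesmer bound really is tight at the claimed value; this is a routine ceiling-function computation and presents no genuine obstacle.
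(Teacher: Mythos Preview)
Your proposal is correct and takes essentially the same approach as the paper: the corollary is stated there without a separate proof, being immediate from Proposition~\ref{prop_n_2_4_2_s}, whose proof already asserts that the upper bounds are the Griesmer upper bounds. Your write-up simply makes explicit the ceiling computation that the paper leaves to the reader, verifying that $g_2^{(2)}(4,d_{\max}(w))+w$ equals $15t+8$, $15t+15$, $15t+16$ in the three residue classes.
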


Proposition~\ref{prop_n_2_4_2_s} can be generalized:
\begin{proposition}
  For each $r\ge 2$ and each $t\in \N$ we have  
\begin{enumerate}[(a)]  
\item $m_2^{(1)}(k-1,3t+2)=tv_k+2^{k-1}$, 
\item $m_2^{(1)}(k-1,3t+3)=(t+1)v_k$, 
\item $m_2^{(1)}(k-1,3t+4)=(t+1)v_k+1$.
\end{enumerate}    
\end{proposition}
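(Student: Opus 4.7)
The plan is to follow the same scheme as in Proposition~\ref{prop_n_2_4_2_s} (the $k = 4$ case), generalizing each ingredient to arbitrary $k$. As there, the upper bounds come from the Griesmer upper bound and the lower bounds from explicit constructions combined via Lemma~\ref{lemma_sum}.

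For the lower-bound constructions I would take: in~(b), the sum of $t+1$ copies of $\PG(k-1,2)$, which has cardinality $(t+1)v_k$ and line multiplicity $3(t+1)$; in~(c), the same configuration together with one extra point, which raises the maximum line multiplicity by exactly one; and in~(a), $t$ copies of $\PG(k-1,2)$ together with the characteristic function of the complement of a hyperplane. This latter set has cardinality $2^{k-1}$, and since any projective line in $\PG(k-1,2)$ either lies inside the hyperplane (meeting its complement in $0$ points) or meets the hyperplane in a single point (meeting its complement in $2$ points), its maximum line multiplicity equals $2$. Lemma~\ref{lemma_sum} then delivers an arc of cardinality $tv_k + 2^{k-1}$ with line multiplicity $3t+2$, as needed.

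For the matching upper bounds I would invoke the Griesmer upper bound, namely that $m_2^{(1)}(k-1,w)$ is the largest integer $n$ satisfying
\[
n \;\ge\; g_2^{(k-2)}(k,n-w) \;=\; (n-w) \,+\, \left\lceil \tfrac{n-w}{2 v_{k-2}} \right\rceil \,+\, \left\lceil \tfrac{n-w}{4 v_{k-2}} \right\rceil.
\]
Substituting the claimed value of $n$ into $d := n-w$ and simplifying with the identities $v_k = 4v_{k-2} + 3$ and $2^{k-1} = 2v_{k-2} + 2$ turns $d$ into a clean multiple of $v_{k-2}$ (plus a small remainder in case~(c)); the two ceilings then evaluate exactly and the Griesmer inequality becomes an equality. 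Incrementing $n$ by one strictly bumps one of the ceilings and violates the bound, giving the desired upper bound. The main obstacle is simply the routine case-by-case arithmetic bookkeeping in (a), (b), (c); no new geometric input beyond that of Proposition~\ref{prop_n_2_4_2_s} is required, and in particular the same three atomic constructions (full space, its affine complement, and a point) suffice for every $k$.
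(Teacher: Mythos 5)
Your proposal is correct and matches the paper's own argument: the paper also takes the affine $(k-1)$-space (i.e.\ the complement of a hyperplane), the full $(k-1)$-space, and the space plus a point as the three atomic constructions, combines them with $t$ copies of $\PG(k-1,2)$ via Lemma~\ref{lemma_sum}, and obtains the upper bounds from the generalized Griesmer bound $n\ge (n-w)+\lceil\tfrac{n-w}{2v_{k-2}}\rceil+\lceil\tfrac{n-w}{4v_{k-2}}\rceil$ evaluated exactly as in your arithmetic. No substantive difference.
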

\begin{proof}
  Constructions for $m_2^{(1)}(k-1,2)\ge 2^{k-1}$, $m_2^{(1)}(k-1,3)\ge v_k$,
  and $m_2^{(1)}(k-1,2)\ge v_k+1$ are given by an affine $(k-1)$-space  (type, a $(k-1)$-space,  and an $(k-1)$-space plus a point, respectively. Combining those examples with $t$ copies of a $(k-1)$-space yields the remaining upper bounds by Lemma~\ref{lemma_sum}.

  The upper bounds are given by the Griesmer upper bound. 
More precisely, applying Theorem~\ref{thm_griesmer_gen_ham} with 
$d_{k-2}=tv_k+2^{k-1}-3t-2=(2t+1)\cdot 2v_{k-2}$ gives 
$n\ge (2t+1)\cdot 2v_{k-2}+(2t+1)+(t+1)=tv_k+2^{k-1}$ and applying Theorem~\ref{thm_griesmer_gen_ham} with $d_{k-2}=(2t+1)\cdot2v_{k-2}+1$ gives 
$n\ge (2t+1)\cdot 2[r]_2+(2t+2)+(t+1)=tv_k+2^{k-1}+2$.
The other two cases are treated in a similar fashion.
\end{proof}

\begin{corollary}
  For each $k\ge 3$ we have that $m_2^{(1)}(k-1,w)$ is given by the Griesmer upper bound for all $w\ge 2$.
\end{corollary}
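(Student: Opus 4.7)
The plan is very short: the preceding proposition already establishes the corollary in disguise, and all that remains is a residue-class bookkeeping step. Concretely, every integer $w\ge 2$ can be written uniquely as $w=3t+j$ with $j\in\{2,3,4\}$ and $t\ge 0$, so the three families of parameters treated in the proposition partition the set of admissible $w$. Thus I would open the proof by making this trichotomy explicit and then invoke the three cases of the proposition.

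Next, I need to verify that the values produced by the proposition, namely $tv_k+2^{k-1}$, $(t+1)v_k$, and $(t+1)v_k+1$, coincide in each case with the Griesmer upper bound for $m_2^{(1)}(k-1,w)$. But this is exactly what the proof of the previous proposition already did: the upper bound asserted there was obtained by applying Theorem~\ref{thm_griesmer_gen_ham} with $r=1$ and the corresponding value of $d_{k-2}=n-w$, and the resulting inequality is tight against the construction. Hence in each of the three cases the Griesmer upper bound and the exact value match. I would phrase this as a one-line reference rather than re-deriving the Griesmer evaluations.

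For maximum clarity I would also remark that the base cases $w\in\{2,3,4\}$ correspond precisely to $t=0$, so there is no separate small-$w$ case to handle; the convention $0\in\N$ used in the preceding proposition covers them. The only conceivable pitfall is a notational one (whether $\N$ includes $0$), and flagging it explicitly avoids confusion.

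I do not expect any genuine obstacle: once the trichotomy in $w\bmod 3$ is laid out, the corollary is immediate from the preceding proposition together with the fact, already observed in its proof, that the stated values equal $g_2^{(1)}(k,n-w)$ for the relevant $n$. In other words, no fresh estimate or construction is needed, and I would keep the proof to three or four lines.
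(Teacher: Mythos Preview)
Your proposal is correct and matches the paper's treatment: the corollary is stated in the paper without proof, precisely because the preceding proposition already exhausts all $w\ge 2$ via the trichotomy $w\equiv 2,3,4\pmod 3$ and its proof explicitly identifies the upper bounds as Griesmer upper bounds. Your observation that $t=0$ must be allowed (and is, given the constructions for $w=2,3,4$ spelled out in the proposition's proof) is the only point worth flagging, and you do so.
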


\begin{lemma}
  \label{lemma_projective_base}
  For $k\ge 5$ we have $m_2^{(k-3)}(k-1,k-2)=k+1$. 
\end{lemma}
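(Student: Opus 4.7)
The plan is to combine an explicit frame construction for the lower bound with a two-step upper bound argument: first reduce to the set case via a projection, then apply the classical Griesmer bound to the dual code.

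For the lower bound $m_2^{(k-3)}(k-1,k-2)\ge k+1$, I would exhibit a projective frame in $\PG(k-1,2)$, namely the $k+1$ points $e_1,\dots,e_k,e_1+\cdots+e_k$. Any $k$ of these form a basis of $\F_2^k$, hence any $k-1$ of them are linearly independent. Since a $(k-3)$-dimensional projective subspace has vector space dimension only $k-2$, it cannot contain $k-1$ linearly independent points, so at most $k-2$ of the frame points lie in it. This yields a $(k+1,k-2)^{(k-3)}$-arc.

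For the upper bound, let $\cK$ be an $(n,k-2)^{(k-3)}$-arc in $\PG(k-1,2)$. If $\cK$ does not span $\PG(k-1,2)$ then it lies in a hyperplane $H\cong\PG(k-2,2)$; the constraint reduces to every hyperplane of $H$ having multiplicity at most $k-2$, and the classical Griesmer bound applied to the associated $[n,k-1,\ge n-k+2]_2$ code gives $n\le k<k+1$. So I may assume $\cK$ spans the whole ambient space. I next show $\cK$ is a set. If some point $P$ had multiplicity $m\ge 2$, projection from $P$ onto a complementary hyperplane $\pi\cong\PG(k-2,2)$ would produce an arc $\cK^\varphi$ spanning $\pi$, with associated $[n-m,k-1]_2$ code $C'$. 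By the projection rule, $(k-4)$-dimensional subspaces of $\pi$ carry $\cK^\varphi$-multiplicity at most $k-2-m$, which gives $d_2(C')\ge n-k+2$. On the other hand, the strictly increasing hierarchy $d_2(C')<d_3(C')<\cdots<d_{k-1}(C')=n-m$ forces $d_2(C')\le n-m-k+3$. Together these imply $m\le 1$, a contradiction.

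Once $\cK$ is a set, the condition $w_{k-3}\le k-2$ becomes: no $k-1$ points lie in a $(k-3)$-dimensional subspace, equivalently any $k-1$ of the $n$ points are linearly independent. Hence the dual of the associated $[n,k]_2$ code is an $[n,n-k,\ge k]_2$ code, and the classical Griesmer bound~(\ref{eq_griesmer_bound}) gives $n\ge g_2(n-k,k)$. Using $\lceil k/2\rceil\ge 3$ for $k\ge 5$, a short check shows $g_2(n-k,k)>n$ whenever $n-k\ge 2$, so $n-k\le 1$, i.e., $n\le k+1$. I expect the main obstacle to be the projection step: verifying cleanly that $\cK^\varphi$ actually spans $\pi$, so that the hierarchy argument applies in dimension exactly $k-1$, and that the translation of the multiplicity bound via~(\ref{eq:project}) gives the stated inequality for $(k-4)$-subspaces.
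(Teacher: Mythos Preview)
Your argument is correct. The self-identified obstacle is not a real one: once $\cK$ spans $\PG(k-1,2)$, the quotient map $\F_2^k\to\F_2^k/\langle v\rangle$ is surjective, so the image of the support of $\cK$ spans $\F_2^{k-1}$; hence $\cK^\varphi$ spans $\pi$ and $C'$ genuinely has dimension $k-1$. Also note $m\le k-2$ is automatic (the point $P$ itself sits in some $(k-3)$-subspace), so $k-2-m\ge 0$ and the projected multiplicity bound makes sense.

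However, your route is genuinely different from the paper's. The paper argues the upper bound by a direct coordinate computation: after observing $m_2^{(k-3)}(k-2,k-2)\le k$ (so the arc must span), it fixes $k$ linearly independent points of $\cK$ as the unit vectors $e_1,\dots,e_k$ and then shows by a Hamming-weight argument that any further point $v$ must have $\wt(v)\ge k-1$, and that two such points $v,w$ satisfy $\wt(v+w)\le 2$, from which one can exhibit $k-1$ points inside an explicit codimension-two subspace. This is short and entirely elementary---no dual codes, no weight hierarchy. Your approach instead separates the multiset/set reduction cleanly via the monotone hierarchy $d_2<\dots<d_{k-1}$ and then invokes the Griesmer bound on the dual code. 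This is more structural and exports better (the dual-code/MDS-type step is the standard way to bound arcs with an ``any $k-1$ independent'' condition), whereas the paper's hands-on argument is specific to $q=2$ and to this particular weight/codimension configuration.
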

\begin{proof}
  A projective base (or frame) shows $m_2^{(k-3)}(k-1,k-2)\ge k+1$. From the Griesmer upper bound we conclude $m_2^{(k-3)}(k-2,k-2)\le k$, so that we can assume that a multiset $\cM$ of at least $k+1$ points contains the points spanned by the $k$ unit vectors. Any further point in $\cM$ spanned by $v\in\F_2^k$ then needs to have Hamming weight $k-1$ or $k$, since otherwise $k-1$ points would be contained in a subspace of codimension two. The sum of two different such vectors with Hamming weight $k$ or $k-1$ has Hamming weight strictly less than $k-1$, so that we can find $k-1$ points in a subspace of codimension two. 
\end{proof}
Note that ovoids imply $m_q^{(1)}(3,2)\ge q^2+1$.

\begin{proposition}
  \label{prop_n_2_5_1_2}
  We have 
\begin{enumerate}[(a)]  
\item  $m_2^{(2)}(4,7t+4)=31t+16$, 
\item $m_2^{(2)}(4,7t+5)=31t+17$, 
\item $m_2^{(2)}(4,7t+6)=31t+24$, 
\item $m_2^{(2)}(4,7t+7)=31t+31$, 
\item $m_2^{(2)}(4,7t+8)=31t+32$, 
\item $m_2^{(2)}(4,7t+9)=31t+33$, 
\item $m_2^{(2)}(4,7t+10)=31t+40$ 
 \end{enumerate} 
 for all $t\in N$. Moreover, we have $m_2^{(2)}(4;3)=6$.
\end{proposition}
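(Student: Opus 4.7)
The plan is to split the proposition into three parts: the Griesmer upper bounds for (a)--(g), the matching constructions, and the separate claim $m_2^{(2)}(4,3)=6$.

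For the upper bounds, the relation $d_2=n-w_2$ converts an $(n,w)^{(2)}$-arc in $\PG(4,2)$ into an $[n,5,d_2]_2^{(2)}$-code, and Theorem~\ref{thm_griesmer_gen_ham} with $k=5$, $r=2$, $q=2$, $v_2=3$ specialises to
\[
g_2^{(2)}(5,d_2)=d_2+\left\lceil\tfrac{d_2}{6}\right\rceil+\left\lceil\tfrac{d_2}{12}\right\rceil+\left\lceil\tfrac{d_2}{24}\right\rceil.
\]
For each case one checks that $g_2^{(2)}(5,n-w)\le n$ (so the claimed $n$ is consistent) and that $g_2^{(2)}(5,(n+1)-w)>n+1$, which rules out a single extra point and hence establishes the upper bound. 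The seven distances $d_2$ fall into only a handful of residues modulo $24$, so the four ceilings can be read off directly, and the passage from $t=0$ to general $t$ is automatic since both $n$ and $d_2$ grow linearly in $t$.

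For the lower bounds I work with three building blocks in $\PG(4,2)$: the full $\PG(4,2)$ itself (yielding $m_2^{(2)}(4,7)\ge 31$), an affine $4$-space $\PG(4,2)\setminus H$ of cardinality $16$ (each plane off $H$ meets $H$ in a line and so hits the affine part in $4$ points), and an affine solid $H\setminus\pi_0$ inside a hyperplane $H$ of cardinality $8$ (a plane crossing $H$ in a line not contained in $\pi_0$ meets it in exactly $2$ points). The base cases at $t=0$ are then: (a) the affine $4$-space; (b) the same plus a single extra point; (c) the affine $4$-space together with an affine solid inside $H$, giving $16+8=24$ points with plane weight $\max(4,\,4+2)=6$; (d) the full $\PG(4,2)$; (e), (f) the full space plus one or two additional points; (g) the superposition of (a) and (c), which Lemma~\ref{lemma_sum} produces as $m_2^{(2)}(4,4)+m_2^{(2)}(4,6)=16+24=40$. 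For $t\ge 1$ one simply adds $t$ copies of $\PG(4,2)$ to each base example, invoking Lemma~\ref{lemma_sum} once more.

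The closing equality $m_2^{(2)}(4,3)=6$ is the $k=5$ instance of Lemma~\ref{lemma_projective_base} and needs no further argument. The only place where genuine combinatorial care is required is in case~(c), and by extension in case~(g): one has to check that the affine solid inside $H$ contributes at most $2$ points to a plane not contained in $H$, which comes down to the fact that in $H\cong\PG(3,2)$ a generic line meets $\pi_0$ in a single point. Everything else is arithmetic plus invocation of Lemmas~\ref{lemma_sum} and~\ref{lemma_projective_base}.
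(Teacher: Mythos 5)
Your proposal is correct and follows essentially the same route as the paper: the value $m_2^{(2)}(4,3)=6$ from Lemma~\ref{lemma_projective_base}, the Griesmer upper bound for all $w\ge 4$, explicit base constructions for $4\le w\le 10$, and Lemma~\ref{lemma_sum} to add $t$ copies of $\PG(4,2)$. Your affine $4$-space, complement of a plane, and their superposition are exactly the paper's types $[4]-[3]$, $[4]-[2]$ and $2[4]-[3]-[2]$ described in different words; you merely spell out the ceiling computations and incidence checks that the paper leaves implicit.
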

\begin{proof}
 Lemma~\ref{lemma_projective_base} yields $m_2^{(2)}(4,3)=6$. Constructions for $4\le s\le 10$ are given by multisets of points with types $[4]-[3]$, $[4]-[3]+[0]$, $[4]-[2]$, $[4]$, 
 $[4]+[0]$, $[4]+2[0]$, and $2[4]-[3]-[2]$, respectively. Combining those examples with $t$ copies of a $4$-space yields the remaining constructions by Lemma~\ref{lemma_sum}.
  The upper bounds for $m_2^{(2)}(4,w)$ are given by the Griesmer upper bound for all $w\ge 4$. 
\end{proof}

\begin{table}[htp]
  \begin{center}
    \begin{tabular}{rrcc}
      \hline 
      $w$ & $m_2^{(2)}(4,w)$ & construction & upper bound \\ 
      \hline
      3 &  6 & projective base & Lemma~\ref{lemma_projective_base} \\ 
      4 & 16 & $[4]-[3]$ & Griesmer upper bound \\ 
      5 & 17 & plus point & Griesmer upper bound \\
      6 & 24 & $[4]-[2]$ & Griesmer upper bound \\
      7 & 31 & $[4]$ & Griesmer upper bound \\ 
      8 & 32 & plus point & Griesmer upper bound \\ 
      9 & 33 & plus point & Griesmer upper bound \\ 
      10 & 40 & $2[4]-[3]-[2]$ & Griesmer upper bound \\
      \hline
    \end{tabular}  
    \caption{Exact values for $m_2^{(2)}(4,w)$.}
    \label{table_n_2_5_2_s}
  \end{center}
\end{table}

\begin{corollary}
  $m_2^{(2)}(4,w)$ is given by the Griesmer upper bound for all $w\ge 4$.
\end{corollary}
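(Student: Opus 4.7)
The corollary is essentially an explicit restatement of the upper-bound half of Proposition~\ref{prop_n_2_5_1_2}, so the plan is to verify that the numerical value listed in each of the seven cases of that proposition coincides with the Griesmer upper bound for $m_2^{(2)}(4,w)$, and that the lower bounds there match. The lower bounds are already provided by the seven base constructions together with Lemma~\ref{lemma_sum}, so the task reduces to checking the Griesmer side.

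The first step is to write the Griesmer upper bound explicitly. With $k=5$, $r=2$, $q=2$ and $v_2 = 3$, Theorem~\ref{thm_griesmer_gen_ham} gives
$$g_2^{(2)}(5,d) = d + \left\lceil \tfrac{d}{6} \right\rceil + \left\lceil \tfrac{d}{12} \right\rceil + \left\lceil \tfrac{d}{24} \right\rceil,$$
so the Griesmer upper bound for $m_2^{(2)}(4,w)$ is the largest $n$ satisfying $n \ge g_2^{(2)}(5,\,n-w)$.

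Next, I would dispose of the seven base cases $w\in\{4,5,\dots,10\}$ by direct ceiling arithmetic. For each such $w$ one checks that the proposition value $n_w$ satisfies $g_2^{(2)}(5,n_w-w)\le n_w$ while $g_2^{(2)}(5,n_w-w+1) > n_w+1$. For instance, for $w=7$ one finds $d=24$ and $g_2^{(2)}(5,24) = 24+4+2+1 = 31 = n_7$, while $d=25$ yields $25+5+3+2 = 35 > 32$; the remaining six cases are entirely analogous.

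Finally, the general statement follows from a simple periodicity argument. Shifting $w$ by $7$ and $n$ by $31$ shifts $d=n-w$ by $24 = \operatorname{lcm}(6,12,24)$, and under such a shift the sum of ceilings $\lceil d/6\rceil + \lceil d/12\rceil + \lceil d/24\rceil$ increases by exactly $4+2+1=7$. Hence $g_2^{(2)}(5,d+24) = g_2^{(2)}(5,d)+31$, so the Griesmer inequality is preserved under the simultaneous shift $w\mapsto w+7$, $n\mapsto n+31$. Combined with the seven verified base cases, this shows that the proposition's values exhaust the Griesmer upper bound for every $w\ge 4$. There is no conceptual obstacle here: the only work is the ceiling bookkeeping in the seven base cases, and the periodicity observation that forces the pattern to continue.
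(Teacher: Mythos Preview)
Your proposal is correct and follows exactly the reasoning the paper intends: the corollary is stated without proof as an immediate consequence of Proposition~\ref{prop_n_2_5_1_2}, whose proof already asserts that the upper bounds are the Griesmer bounds for all $w\ge 4$ and supplies the matching constructions. Your write-up simply makes explicit the ceiling computations and the periodicity $g_2^{(2)}(5,d+24)=g_2^{(2)}(5,d)+31$ that the paper leaves to the reader.
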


We can generalize Proposition~\ref{prop_n_2_5_1_2} as follows:
\begin{proposition}
  For each $k\ge 4$ and all $w\ge 4$ we have that $m_2^{(2)}(k-1,w)$ is given by the Griesmer upper bound.    
\end{proposition}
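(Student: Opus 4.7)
The plan is to generalize Proposition~\ref{prop_n_2_5_1_2} from $k=5$ to arbitrary $k\ge 4$, combining an explicit treatment of a single period of $w$-values with an induction that augments multisets by copies of $\PG(k-1,2)$.

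First, I would observe that $m_2^{(2)}(k-1,7)=v_k$, attained by $\PG(k-1,2)$ itself, since every plane contains exactly $v_3=7$ points. Together with Lemma~\ref{lemma_sum}, this gives
\[
m_2^{(2)}(k-1,w+7)\ \ge\ m_2^{(2)}(k-1,w)+v_k.
\]
A short calculation with the defect $f(d):=g_2^{(k-3)}(k,d)-d=\lceil d/(2v_{k-3})\rceil+\lceil d/(4v_{k-3})\rceil+\lceil d/(8v_{k-3})\rceil$ shows $f(d+8v_{k-3})=f(d)+7$, so the Griesmer upper bound $n^*(w)$ obeys the matching identity $n^*(w+7)=n^*(w)+v_k$. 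It therefore suffices to establish $m_2^{(2)}(k-1,w)=n^*(w)$ for the seven values $w\in\{4,5,\ldots,10\}$, and the proposition will follow by induction on $w$.

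For those base cases I would use the same canonical configurations as in the $k=5$ table, replacing $[4]$, $[3]$, $[2]$ throughout by $[k-1]$, $[k-2]$, $[k-3]$: namely $[k-1]-[k-2]$, $[k-1]-[k-2]+[0]$, $[k-1]-[k-3]$, $[k-1]$, $[k-1]+[0]$, $[k-1]+2[0]$, and $2[k-1]-[k-2]-[k-3]$ (with $[k-3]\subset[k-2]$) for $w=4,\ldots,10$. Grassmann's inequality in $\PG(k-1,2)$ shows that any plane $P$ meets a hyperplane $[k-2]$ in at least a line (3 points) and meets a $[k-3]$ in at least a point, and these estimates are tight for a generic plane. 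The sizes, read off from $v_k-v_{k-1}=2^{k-1}$, $v_k-v_{k-2}=3\cdot 2^{k-2}$, $v_k$, and $2v_k-v_{k-1}-v_{k-2}=5\cdot 2^{k-2}$ (with the obvious $+0,+1,+2$ corrections), match $n^*(w)$ after a direct substitution into the formula for $g_2^{(k-3)}(k,\cdot)$.

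The hard part will be the case $w=10$: with $[k-3]\subset[k-2]$ and writing $a=|P\cap[k-2]|$, $b=|P\cap[k-3]|$ for a plane $P\not\subset[k-2]$, the multiplicity of $P$ in $2[k-1]-[k-2]-[k-3]$ equals $14-a-b$; since $a\ge 3$ and $b\ge 1$, this is at most $10$, attained whenever $(a,b)=(3,1)$, which occurs for generic $P$. The remaining six cases reduce to analogous but simpler dimension estimates involving at most one subtracted subspace.
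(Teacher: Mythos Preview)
Your proposal is correct and follows essentially the same strategy as the paper: establish the seven base cases $w\in\{4,\dots,10\}$ via the canonical constructions $[k-1]-[k-2]$, $[k-1]-[k-2]+[0]$, $[k-1]-[k-3]$, $[k-1]$, $[k-1]+[0]$, $[k-1]+2[0]$, $2[k-1]-[k-2]-[k-3]$, then extend to all $w\ge 4$ by adding copies of $\PG(k-1,2)$, with the upper bounds coming from the generalized Griesmer bound. The only notable differences are presentational: you phrase the upper-bound step through the periodicity $f(d+8v_{k-3})=f(d)+7$ of the defect function (hence $n^*(w+7)=n^*(w)+v_k$), whereas the paper substitutes directly into Theorem~\ref{thm_griesmer_gen_ham} for each residue class; and you add an explicit Grassmann-type check that the constructions really have the stated maximal plane multiplicity, which the paper leaves implicit in its appeal to Solomon--Stiffler. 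Your stipulation $[k-3]\subset[k-2]$ in the $w=10$ case is harmless but not actually needed, since $a\ge 3$ and $b\ge 1$ hold for any plane regardless of the relative position of the two subtracted subspaces.
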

\begin{proof}
  The Solomon--Stiffler constructions for $[k-1]-[k-2]$, $[k-1]-[k-3]$, $[k-1]$, and $2[k-1]-[k-2]-[k-3]$ give $m_2^{(k-4)}(k-1,4)\ge 2^{k-2}$, $m_2^{(k-4)}(k-1,6)\ge 3\cdot 2^{k-3}$, $m_2^{(2)}(k-1,7)\ge v_k$, and $m_2^{(2)}(k-1,10)\ge 5\cdot 2^{k-3}$. Adding points gives $m_2^{(2)}(k-1,5)\ge m_2^{(2)}(k-1,4)+1$, $m_2^{(2)}(k-1,8)\ge m_2^{(k-4)}(k-1,7)+1$, and  $m_2^{(2)}(k-1,9)\ge m_2^{(2)}(k-1,7)+2$. For $w>10$ the lower bounds 2 given by $m_2^{(2)}(k-1,w)\ge m_2^{(2)}(k-1,w-7)+m_2^{(2)}(k-1,7)$.

  The upper bounds are given by the Griesmer upper bound. More precisely, applying Theorem~\ref{thm_griesmer_gen_ham} with $d_{k-3}=(4t+2)\cdot 2v_{k-3}$ gives 
  $n\ge (4t+2)\cdot 2v_{k-3}+(4t+2)+(2t+1)+(t+1)=t\cdot v_+2^{k-1}$; 
  applying Theorem~\ref{thm_griesmer_gen_ham} with $d_r=(4t+3)\cdot 2v_{k-3}$ gives 
  $n\ge (4t+3)\cdot 2v_{k-3}+(4t+3)+(2t+2)+(t+1)=t\cdot v_k+3\cdot 2^{k-2}$; 
  applying Theorem~\ref{thm_griesmer_gen_ham} with $d_r=(4t+4)\cdot 2v_{k-3}$ gives 
  $n\ge (4t+4)\cdot 2v_{k-3}+(4t+4)+(2t+2)+t=t\cdot v_k+ v_k$, 
  and applying Theorem~\ref{thm_griesmer_gen_ham} with $d_r=(4t+5)\cdot 2v_{k-3}$ gives 
  $n\ge (4t+5)\cdot 2v_{k-3}+(4t+5)+(2t+3)+(t+2)=t\cdot v_k+5\cdot 2^{k-2}$.
\end{proof}
So, the only non-trivial value that is not determined yet is $m_2^{(2)}(k-1,3)$. The first values are given by  $m_2^{(2)}(3,3)=5$ and $m_2^{(2)}(4,3)=6$. 

\begin{table}[htp]
  \begin{center}
    \begin{tabular}{rrcc}
      \hline 
      $w$ & $m_2^{(2)}(5,w)$ & construction & upper bound \\ 
      \hline
      3 &  8 & Lemma~\ref{lemma_n_2_6_3_3}  & Lemma~\ref{lemma_n_2_6_3_3} \\ 
      4 & 32 & $[5]-[4]$ & Griesmer upper bound \\ 
      5 & 33 & plus point & Griesmer upper bound \\
      6 & 48 & $[5]-[3]$ & Griesmer upper bound \\
      7 & 63 & $[5]$ & Griesmer upper bound \\
      8 & 64 & plus point & Griesmer upper bound \\
      9 & 65 & plus point & Griesmer upper bound \\
      10 & 80 & $2[5]-[4]-[3]$ & Griesmer upper bound \\
      \hline
    \end{tabular}  
    \caption{Exact values for $m_2^{(2)}(5,w)$.}
    \label{table_n_2_6_3_s}
  \end{center}
\end{table}

\begin{lemma}
  \label{lemma_n_2_6_3_3}
  We have $n_2^{(2)}(5,3)=8$.
\end{lemma}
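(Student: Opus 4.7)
The plan is to establish the matching bounds $m_2^{(2)}(5,3)\ge 8$ and $m_2^{(2)}(5,3)\le 8$ separately. For the lower bound, I would exhibit an $(8,3)^{(2)}$-arc in $\mathrm{PG}(5,2)$, i.e., $8$ distinct points in $\mathbb{F}_2^6$ no four of which are linearly dependent (equivalently, no four lie in a common plane of $\mathrm{PG}(5,2)$). Via the standard correspondence between linear codes and projective multisets, this amounts to producing a $[8,6]_2$ code $C$ whose dual satisfies $d_1(C^{\perp})\ge 5$. A $[8,2,5]_2$ code attains the classical Griesmer bound $8=5+\lceil 5/2\rceil$ and exists, for example the code generated by $(1,1,1,1,1,0,0,0)$ and $(0,0,0,1,1,1,1,1)$ (whose three nonzero codewords have weights $5,5,6$). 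The $8$ columns of a generator matrix of its dual then form the required arc.

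For the upper bound I would argue by contradiction: suppose $\mathcal{K}$ is a $(9,3)^{(2)}$-arc in $\mathrm{PG}(5,2)$. The crucial first step is to show that every point has multiplicity at most $1$ in $\mathcal{K}$. Given a point $P$ of multiplicity $t$, project from $P$ onto a hyperplane $\pi\cong\mathrm{PG}(4,2)$ not containing $P$. Planes of $\mathrm{PG}(5,2)$ through $P$ correspond under the projection to lines of $\pi$, and by the formula from Section~\ref{sec_preliminaries} one has $\mathcal{K}^{\varphi}(L)\le 3-t$ for every line $L\subset\pi$, while $\mathcal{K}^{\varphi}$ has total multiplicity $9-t$. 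If $t\ge 2$, then every line of $\pi$ carries at most one $\mathcal{K}^{\varphi}$-point, and since any two points of $\mathrm{PG}(4,2)$ span a line, $\mathcal{K}^{\varphi}$ must have total multiplicity at most $1$, whence $9-t\le 1$, contradicting the trivial bound $t\le 3$. Hence $\mathcal{K}$ is a set of $9$ distinct points.

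Once $\mathcal{K}$ is a set, the condition ``every plane of $\mathrm{PG}(5,2)$ contains at most $3$ points of $\mathcal{K}$'' is equivalent to ``no four of the nine vectors in $\mathbb{F}_2^6$ are linearly dependent''. Viewing the nine points as columns of a generator matrix of a $[9,6]_2$ code $C$, this translates to $d_1(C^{\perp})\ge 5$, requiring a $[9,3,5]_2$ code; the classical Griesmer bound $5+\lceil 5/2\rceil+\lceil 5/4\rceil=10>9$ rules this out, giving the contradiction. The main obstacle is precisely the multiplicity-reduction step: the generalized Griesmer bound of Theorem~\ref{thm_griesmer_gen_ham} yields only $n\ge 9$ for $d_3=6$ and so by itself does not exclude $n=9$, making the projection argument that rules out point multiplicities $\ge 2$ the essential new ingredient.
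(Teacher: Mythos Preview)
Your proof is correct and takes a genuinely different route from the paper's. For the lower bound the paper simply writes down an explicit $6\times 8$ matrix whose columns are the six unit vectors together with $(1,1,1,1,0,0)^\top$ and $(1,1,0,0,1,1)^\top$, whereas you invoke the existence of a Griesmer $[8,2,5]_2$ code and take its parity-check matrix; these are of course equivalent viewpoints (the dual of the paper's example is exactly such a code), but your formulation makes the structural reason transparent. For the upper bound the paper proceeds by an explicit Hamming-weight case analysis: after normalizing so that the six unit vectors lie in the putative $9$-set, it argues that any further point must have weight $\ge 4$, fixes a weight-$4$ point, and then shows that two additional vectors cannot simultaneously satisfy the required weight and pairwise-distance constraints. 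Your argument is more conceptual: once multiplicities are forced to be $1$ via the projection, the condition ``no four points in a plane'' is exactly ``dual distance $\ge 5$'', and the classical Griesmer bound $g_2(3,5)=10>9$ kills a $[9,3,5]_2$ dual in one line. Your method generalizes more readily and avoids casework; the paper's hands-on analysis has the merit of being entirely self-contained without appeal to code tables or Griesmer.

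One small point worth tightening: you write ``a generator matrix of a $[9,6]_2$ code $C$'', tacitly assuming the nine points span $\PG(5,2)$. If they span only a $\PG(k-1,2)$ with $k<6$, then $C^\perp$ has dimension $9-k>3$, and since $g_2(m,5)\ge g_2(3,5)=10$ for all $m\ge 3$ the Griesmer contradiction is only strengthened; alternatively you may simply cite $m_2^{(2)}(4,3)=6$ from Proposition~\ref{prop_n_2_5_1_2} to dispose of the degenerate case.
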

\begin{proof}
  A feasible example is given by 
  $$
    \begin{pmatrix}
      1 & 0 & 0 & 0 & 0 & 0 & 1 & 1 \\
      0 & 1 & 0 & 0 & 0 & 0 & 1 & 1 \\
      0 & 0 & 1 & 0 & 0 & 0 & 1 & 0 \\
      0 & 0 & 0 & 1 & 0 & 0 & 1 & 0 \\
      0 & 0 & 0 & 0 & 1 & 0 & 0 & 1 \\
      0 & 0 & 0 & 0 & 0 & 1 & 0 & 1 \\
    \end{pmatrix}
  $$
  Let $\cM$ be a multiset of $n\ge 9$ points in $\PG(5,2)$ such that each plane contains at most three points. Since $m_2^{(2)}(5,3)=6$ we assume w.l.o.g.\ that $\cM$ contains the six points spanned by the six unit vectors. Clearly, the maximum point multiplicity is one and every additional point is spanned by a vector $x$ with Hamming weight at least $4$. Since $m_2^{(3)}(5,2;4)=7$ we assume w.l.o.g.\ that $\cM$ also contains the point spanned by $x=(1,1,1,1,0,0)^\top$. Let two further points be spanned by $y,z\in\F_2^6$ . Since every plane contains at most three points we have $\wt(y),\wt(z)\ge 4$ and $d_H(x,z),d_H(x,y),d_H(y,z)\ge 4$. If $\wt(y)=4$, then no such vector $z$ exists, so that $\wt(y),\wt(z)\ge 5$, which contradicts $d_H(y,z)\ge 4$.   
\end{proof}

\begin{lemma}
  \label{lemma_n_2_7_4_3}
  We have $m_2^{2}(6,3)=11$ and $m_2^{(2)}(7,3)=17$.
\end{lemma}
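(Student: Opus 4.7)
The plan is to establish both equalities by combining explicit constructions for the lower bounds with a structural analysis for the upper bounds, in the spirit of the proof of Lemma~\ref{lemma_n_2_6_3_3}.

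For the lower bound $m_2^{(2)}(6,3)\ge 11$ I will exhibit the multiset
$\cM=\{e_1,\ldots,e_7\}\cup\{e_1+e_2+e_3+e_4,\;e_1+e_2+e_5+e_6,\;e_1+e_3+e_5+e_7,\;e_1+\cdots+e_7\}$
in $\PG(6,2)$ and check that no three of these eleven vectors sum to zero (the cap condition) and that no four sum to zero; together these are equivalent to saying that no plane of $\PG(6,2)$ contains four of them. For $m_2^{(2)}(7,3)\ge 17$ I will give an analogous augmentation of the eight unit vectors in $\F_2^8$ by nine further weight-$\ge 4$ vectors, again checked by direct sum verification.

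For the upper bound, let $\cM$ be an $(n,3)^{(2)}$-arc in $\PG(k-1,2)$ with $(k,n)\in\{(7,12),(8,18)\}$. First, $\cM$ must be a simple set: a point $P$ with multiplicity $t\ge 2$ would force every plane through $P$ to contain at most $3-t\le 1$ further multiplicity, so the projection from $P$ onto a hyperplane would be a multiset with line-multiplicity at most one and hence total multiplicity at most one, contradicting $n\ge 12$. Second, $\cM$ must be a cap: three collinear points on a line $\ell$ would already saturate every plane through $\ell$, and since every point off $\ell$ lies in a unique such plane we would obtain $\cM\subseteq\ell$ and $|\cM|\le 3$. Third, $\cM$ must span $\F_2^k$, since otherwise $|\cM|\le m_2^{(2)}(k-2,3)\in\{8,11\}<n$. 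After a change of basis I may assume $e_1,\ldots,e_k\in\cM$, and the remaining $n-k$ vectors $v_1,\ldots,v_{n-k}$ then have weight $\ge 4$, pairwise Hamming distance $\ge 3$, and must satisfy $v_i+v_j\ne v_\ell$, $v_i+v_j+v_\ell\ne e_m$, and $v_i+v_j+v_\ell+v_m\ne 0$ for all allowed indices.

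The hard part will be to show that no such configuration of $n-k$ vectors exists. For $(k,n)=(7,12)$ this is a short and finite case distinction: after fixing four valid additional vectors, every weight-$4$ candidate for a fifth vector can be checked to violate at least one of the sum conditions above. For $(k,n)=(8,18)$ the case distinction is considerably more involved and forms the main obstacle of the proof; the most economical presentation is probably to observe that the reduction above shows $\cM$ generates a linear code whose dual has minimum distance at least $5$, so that the two upper bounds amount to the classical non-existence of binary linear $[12,5,5]_2$ and $[18,10,5]_2$ codes, both visible for instance in~\cite{grassl-tables}.
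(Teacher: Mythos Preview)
Your approach is correct and takes a genuinely different route from the paper. The paper establishes both upper bounds by direct ILP computation: after prescribing the $k$ unit vectors it lets an ILP solver search over the remaining weight-$\ge 4$ candidates (for $k=8$ it even splits according to whether a weight-$4$ vector occurs). You instead observe that a simple spanning $(n,3)^{(2)}$-arc in $\PG(k-1,2)$ is exactly a $k\times n$ matrix any four of whose columns are linearly independent, i.e.\ the parity-check matrix of an $[n,n-k,\ge 5]_2$ code; the two upper bounds then reduce to the classical non-existence of $[12,5,5]_2$ and $[18,10,5]_2$ codes. This is cleaner and more conceptual than the paper's computer search, at the cost of importing those two non-existence facts from the literature (neither falls to the Griesmer or sphere-packing bound, so the appeal to \cite{grassl-tables} is doing real work). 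Two small remarks: your explicit eleven-point set in $\PG(6,2)$ differs from the paper's but is easily checked to be valid; and you only \emph{promise} an explicit $17$-point set in $\PG(7,2)$ --- the most economical fix is to note that the same duality turns the lower bound into the existence of a $[17,9,5]_2$ code, which is likewise in \cite{grassl-tables}, so both directions of both equalities are covered by one reference.
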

\begin{proof}
  An example showing $m_2^{(2)}(6,3)\ge 11$ is given by 
  $$
    \begin{pmatrix}
      1000000&11&10\\
      0100000&11&01\\
      0010000&10&10\\
      0001000&10&01\\
      0000100&01&10\\
      0000010&01&01\\
      0000001&00&11\\
    \end{pmatrix}.
  $$
  Let $\cM$ be a multiset of $n\ge 12$ points in $\PG(6,2)$ such that each plane contains at most three points. Since $m_2^{(2)}(5,3)=8$ we assume w.l.o.g.\ that $\cM$ contains the seven points spanned by the seven unit vectors. Clearly, the maximum point multiplicity is one and every additional point is spanned by a vector $x$ with Hamming weight at least $4$. Via a small ILP computation we excluded $n\ge 12$. 
  An example showing $m_2^{(2)}(7,3)\ge 17$ is given by 
  $$
    \begin{pmatrix}
      10000000&1110&01111\\
      01000000&1101&11101\\
      00100000&1010&10011\\
      00010000&1001&01110\\
      00001000&0110&10101\\
      00000100&0101&01011\\
      00000010&0011&00111\\
      00000001&0000&11111
    \end{pmatrix}.
  $$
  Again we can prescribe the eight points spanned by the unit vectors, so that any further point is spanned by a vector $x\in\F_2^8$ with Hamming weight at least $4$. If there is no point spanned by a vector with Hamming weight $4$, than a small ILP computations shows that the maximum cardinality is $16$. So we can additionally prescribe an arbitrary point spanned by a vector with Hamming weight $4$. Another ILP computation then shows that the maximum cardinality is $17$. 
\end{proof}

\begin{table}[htp]
  \begin{center}
    \begin{tabular}{rrcc}
      \hline 
      $w$ & $m_2^{(2)}(6,w)$ & construction & upper bound \\ 
      \hline
      3 &  11 & Lemma~\ref{lemma_n_2_7_4_3}  & Lemma~\ref{lemma_n_2_7_4_3} \\ 
      4 & 64 & $[6]-[5]$ & Griesmer upper bound \\ 
      5 & 65 & plus point & Griesmer upper bound \\
      6 & 96 & $[6]-[4]$ & Griesmer upper bound \\
      7 & 127 & $[6]$ & Griesmer upper bound \\
      8 & 128 & plus point & Griesmer upper bound \\
      9 & 129 & plus point & Griesmer upper bound \\
      10 & 160 & $2[6]-[5]-[4]$ & Griesmer upper bound \\
      \hline
    \end{tabular}  
    \caption{Exact values for $m_2^{(2)}(6,w)$.}
    \label{table_n_2_7_4_s}
  \end{center}
\end{table}

\begin{lemma}
  \label{lemma_n_2_6_2_11}
  We have $m_2^{(3)}(5,11)=38$.
\end{lemma}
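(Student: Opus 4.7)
The plan is to establish the lower and upper bounds separately, with the lower bound coming from a Solomon--Stiffler construction and the upper bound requiring a projection reduction followed by a computer-assisted non-existence check.

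For the lower bound, I construct a $(38,11)^{(3)}$-arc in $\PG(5,2)$. Let $\mathcal{F}$ be a canonical minihyper of type $[3]+[2]+[1]$ with maximum point multiplicity $1$, i.e., the sum of a solid, a plane, and a line, so that $|\mathcal{F}|=v_4+v_3+v_2=15+7+3=25$. The complementary arc $\mathcal{K}=\chi_{\PG(5,2)}-\mathcal{F}$ has $n=63-25=38$ points, and the associated linear code is $[38,6,18]_2$, attaining the classical Griesmer bound. By Theorem~\ref{thm:grriesmer1}, this code attains the Griesmer bound for every generalized Hamming weight; in particular $d_2=27$, whence $w_3=n-d_2=11$, giving $m_2^{(3)}(5,11)\ge 38$.

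For the upper bound, assume $\mathcal{K}$ is a multiset in $\PG(5,2)$ with $n:=|\mathcal{K}|\ge 39$ and $w_3(\mathcal{K})\le 11$. Let $P$ be a point of maximum multiplicity $s$ and project from $P$ onto a complementary hyperplane $\pi\cong\PG(4,2)$. Every plane of $\pi$ is the image of a unique solid of $\PG(5,2)$ through $P$, so $\mathcal{K}^\varphi$ is an arc of $n-s$ points whose maximum plane multiplicity is at most $w_3-s\le 11-s$. Thus $n-s\le m_2^{(2)}(4,11-s)$. Table~\ref{table_n_2_5_2_s} shows $m_2^{(2)}(4,9)=33$; for $s=2$ the inequality reads $37\le 33$, a contradiction, and larger $s$ is only worse. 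Hence $s=1$ and $\mathcal{K}$ is a projective set.

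It remains to rule out $n\ge 39$ in this case. Since any hyperplane has only $v_5=31$ points, $\mathcal{K}$ must span $\PG(5,2)$ and thereby contains six linearly independent points. Following the approach of Lemma~\ref{lemma_n_2_7_4_3}, I would fix w.l.o.g.\ these six points to be the coordinate points (spanned by the unit vectors) and formulate the choice of the remaining $33$ points of $\mathcal{K}$ as a binary integer linear program: one variable per the other $57$ points of $\PG(5,2)$, constraint $\mathcal{K}(S)\le 11$ for each of the $651$ solids $S$, and objective $\sum x_P$. An ILP run then verifies that the maximum is $32$, so $n\le 38$. The main obstacle is precisely this last step: it closes the unit gap between the coding upper bound ($39$) and the true value ($38$), and despite the projection reduction the configuration space remains too large for a hand argument, making computer assistance necessary.
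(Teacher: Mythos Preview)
Your lower-bound construction contains a genuine error. In $\PG(5,2)$ a solid has vector dimension $4$ and a plane has vector dimension $3$; since $4+3>6$, Grassmann's formula forces any solid and plane to meet in at least a point. Hence a canonical minihyper of type $[3]+[2]+[1]$ with maximum point multiplicity $1$ does not exist, and $\chi_{\PG(5,2)}-\mathcal{F}$ is not a well-defined (nonnegative) arc. The paper's own remark immediately after the lemma makes exactly this point: the union of a solid and a plane (plus two points) gives the required minihyper parameters only as a multiset, not as a set. Your conclusion $m_2^{(3)}(5,11)\ge 38$ is nonetheless correct because the $[38,6,18]_2$ Griesmer code exists (and is in fact unique); the paper simply cites that code and invokes Theorem~\ref{thm:genweights} implicitly, rather than attempting a Solomon--Stiffler realization.

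For the upper bound both you and the paper first use the projection argument with $m_2^{(2)}(4,9)=33$ to force point multiplicity $1$, but then diverge. You prescribe only a projective base of six points and hand the remaining $57$-variable problem to an ILP. The paper adds one more layer of structure before computing: from $m_2^{(3)}(4,11)=21$ every hyperplane carries at most $21$ points, and from $m_2^{(4)}(5,20)=38$ a hyperplane of multiplicity exactly $21$ must occur; since there are only two $[21,5,10]_2$ codes, prescribing each in turn yields two much smaller ILPs. Your direct ILP is perfectly sound and probably still tractable, but the paper's hyperplane reduction buys a substantially smaller search and makes the computation more transparent and reproducible.
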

\begin{proof}
  The lower bound is given by the unique $[38,6,18]_2$ code \cite{bouyukliev2001optimal}, which is a Griesmer code. A generator matrix is e.g.\ given by
  $$
    \left(\begin{smallmatrix}
    00000100000000000000001111111111111111\\
    00001000000001111111110000000111111111\\
    00010000011110000111110001111000001111\\
    00100001101110011000110110011001110011\\
    01000010110111101011011010101010010101\\
    10000011111010110101011101001100110110
    \end{smallmatrix}\right).
  $$
  Let $\cM$ be a multiset of points in $\PG(5,2)$ with at most $11$ points in each solid. 
  Considering projections through a point and $m_2^{2)}(4,9)=33$ implies $\cM(P)\le 1$ for every point $P$. 
  Since $m_2^{(3)}(4,11)=21$ we have 
  $\cM(H)\le 21$ for every hyperplane $H$. Since $m_2^{(4)}(5,20)=38$ we can assume the existence of a hyperplane $H^\star$ with $\cM(H^\star)=21$. There are exactly two $[21,5,10]_2$ codes, see e.g.\ \cite[Theorem 6]{aggarwal2012characterisation}. Prescribing these two possibilities for $H^\star$ a small ILP computation quickly shows $\#\cM\le 38$.    
\end{proof}
We remark that the Griesmer upper bound gives $m_2^{(3)}(5,11)\le 41$, while the coding upper bound yields $m_2^{(3)}(5,11)\le 39$ via 
$m_2^{(4)}(5,21)=39$. Note that the complement of a hypothetical set of $39$ points in $\PG(5,2)$ points with at most $11$ points per solid is a multiset of points of cardinality $24$ that blocks every solid at least four times. The union of a solid and a plane (plus two arbitrary points) gives such an example as a multiset but not as a set of points.

\begin{proposition}
  If $w\ge 12$ or $w\in\{6,8,9,10\}$, then $m_2^{(3)}(5,w)$ is given by the Griesmer upper bound. Moreover, we have $m_2^{3)}(5,4)=7$, $m_2^{(3)}(5,5)=11$,
  $m_2^{(3)}(5,7)=19$, and $m_2^{(3)}(5,11)=38$.
\end{proposition}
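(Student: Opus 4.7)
The plan is to treat the eight individual values $w\in\{4,\ldots,11\}$ and the range $w\ge 12$ separately.

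First, the cases with an already established proof: $m_2^{(3)}(5,4)=7$ is the $k=6$ special case of Lemma~\ref{lemma_projective_base} (a projective base of $\PG(5,2)$ has $7$ points and no $6$ of them lie in a solid), and $m_2^{(3)}(5,11)=38$ is exactly Lemma~\ref{lemma_n_2_6_2_11}. I would simply cite both.

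For the Griesmer-tight range $w\ge 12$ or $w\in\{6,8,9,10\}$, the plan proceeds in two steps. First, evaluate the Griesmer upper bound from Theorem~\ref{thm_griesmer_gen_ham} with $k=6$, $r=2$, $q=2$, namely
\[
w \;\ge\; \left\lceil\tfrac{n-w}{6}\right\rceil+\left\lceil\tfrac{n-w}{12}\right\rceil+\left\lceil\tfrac{n-w}{24}\right\rceil+\left\lceil\tfrac{n-w}{48}\right\rceil,
\]
which a routine numerical check turns into the desired ceilings ($18,32,33,34,48,49,56,63$ for $w=6,8,9,10,12,13,14,15$, and the obvious additive extension for $w\ge 16$). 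Second, match each ceiling with a construction via Theorem~\ref{thm:minihyp-griesmer} and Lemma~\ref{lemma_sum}: the whole space $\PG(5,2)$ for $w=15$; the complement of a plane for $w=14$; the complement of a solid (optionally enlarged by one free point) for $w=12,13$; the complement of a hyperplane with $0,1$ or $2$ added points for $w=8,9,10$; and a Griesmer multiarc of parameters $(17,5)^{(3)}$ plus one free point for $w=6$. For $w\ge 16$, Lemma~\ref{lemma_sum} glues a copy of $\PG(5,2)$ to a construction for $w-15$, raising both the Griesmer ceiling and the cardinality by $63$, so the induction closes.

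The two remaining cases $w=5$ and $w=7$ need genuinely new arguments because the Griesmer upper bound ($17$ and $25$) is loose. For each, I would first exhibit an explicit example (a $6\times 11$, respectively $6\times 19$, generator matrix) and then prove optimality by contradiction along the lines of Lemmas~\ref{lemma_n_2_7_4_3} and~\ref{lemma_n_2_6_2_11}. Applying the projection formalism of~(\ref{eq:project}) through a point of multiplicity $t$ in a hypothetical counterexample yields a multiarc in $\PG(4,2)$ of cardinality $n-t$ and maximum plane multiplicity $w-t$. For $w=5$, choosing $t\ge 2$ would force $m_2^{(2)}(4,3)\ge 10$, contradicting $m_2^{(2)}(4,3)=6$ from Proposition~\ref{prop_n_2_5_1_2}; for $w=7$, choosing $t\ge 2$ would force $m_2^{(2)}(4,5)\ge 18$, contradicting $m_2^{(2)}(4,5)=17$ from the same proposition. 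In both cases the putative counterexample must therefore be a simple set. The proof is then finished by prescribing a maximum sub-configuration up to projective equivalence -- a projective base or a heavy hyperplane section for $w=5$, an optimal hyperplane section for $w=7$ -- and excluding the resulting finite list of configurations by a small integer linear program, exactly as in Lemmas~\ref{lemma_n_2_7_4_3} and~\ref{lemma_n_2_6_2_11}.

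The main obstacle is the last, combinatorial, step for $w\in\{5,7\}$: because the Griesmer slack is large, no purely dimensional counting closes the gap, and one has to reduce to a finite search completed by an ILP solver. Everything else is routine numerical verification together with Solomon--Stiffler-style constructions.
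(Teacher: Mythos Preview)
Your overall outline matches the paper's, but two of your construction steps do not actually deliver Griesmer-tight arcs.

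For $w=6$ you propose ``a Griesmer multiarc of parameters $(17,5)^{(3)}$ plus one free point''. No such arc exists: you yourself note that the Griesmer ceiling $17$ for $w=5$ is loose, and in fact $m_2^{(3)}(5,5)=11$. (Formally, the Solomon--Stiffler shape $[5]-[4]-[3]$ that would give $(17,5)^{(3)}$ forces negative point multiplicities when $\sigma=1$.) The paper instead exhibits an explicit $[18,6,8]_2$ code whose associated arc has $w_3=6$; this is not obtainable by adding a point to any $w=5$ example.

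For $w\ge 16$ your induction ``glue a copy of $\PG(5,2)$ to a construction for $w-15$'' only reaches the Griesmer bound when $m_2^{(3)}(5,w-15)$ is itself Griesmer-tight. This fails precisely when $w-15\in\{4,5,7,11\}$, i.e.\ at $w\in\{19,20,22,26\}$. For instance, at $w=20$ your recipe gives $11+63=74$, while the Griesmer bound is $80$; at $w=19$ you get $7+63=70$ instead of $73$. The error then propagates through all later $w\equiv 4,5,7,11\pmod{15}$. The paper covers these residues with direct Solomon--Stiffler types $t[5]-[4]-[3]$, $t[5]-[4]-[2]$, $t[5]-[3]-[2]$ (for $t\ge 2$) and $t[5]-[4]-[3]-[2]$ (for $t\ge 3$), together with an explicit $[73,6,36]_2$ Griesmer code for $w=19$.

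Your treatment of $w\in\{5,7\}$ via projection (forcing maximal point multiplicity $1$) followed by an ILP is sound, but note that the paper avoids the ILP entirely here: the coding upper bound, using $m_2^{(3)}(4,5)$ and $m_2^{(3)}(4,7)$ together with the corresponding optimal $6$-dimensional codes, already gives $11$ and $19$ on the nose.
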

\begin{proof}
  Using Lemma~\ref{lemma_projective_base} and Lemma~\ref{lemma_n_2_6_2_11} we state that $m_2^{(3)}(5,4)=7$ and $m_2^{(3)}(5,11)=38$, respectively.
  We consider the following constructions
  \begin{itemize}
     \item types $t[5]$, $t[5]+[0]$, $t[5]+2[0]$, $t[5]+3[0]$, $t[5]-[4]$, $t[5]-[4]+[0]$, $t[5]-[4]+2[0]$, $t[5]-[3]$, $t[5]-[3]+[0]$, and $t[5]-[2]$ for $t\ge 1$;
     \item types $t[5]-[4]-[3]$, $t[5]-[4]-[2]$, and $t[5]-[3]-[2]$ for $t\ge 2$; and
     \item type $t[5]-[4]-[3]-[2]$ for $t\ge 3$.
  \end{itemize}
  So, it remains to provide constructions for $w\in\{5,\dots,7,19\}$. For $w=7$ we add an arbitrary point to an example for $w=6$. For $w\in\{5,6,19\}$ we provide explicit examples:
  $$
    \left(\begin{smallmatrix}
    00000100111\\
    00001011111\\
    00010001011\\
    00100011100\\
    01000011001\\
    10000011010
    \end{smallmatrix}\right),
    \left(\begin{smallmatrix}
    000000100001111111\\
    000011000110001111\\
    000100001110110011\\
    001000011010011101\\
    010000011101000111\\
    100000010111101001
    \end{smallmatrix}\right)
    ,\text{ and}
  $$
  $$
    \left(\begin{smallmatrix}
    0001000000000000111111111111111000000000001111111111000000000001111111111\\
    0010000001111111000001111111111000000111110000001111000000111110000001111\\
    0000001110000111001110000011111010011001110001110011010011001110001110011\\
    0000010110111000110010001100111101100001110001111100101100001110001111100\\
    0100111011001001010110010101011000101010110110010101000101010110110010101\\
    1000110011010011100100100110001001011100011010111001001011100011010111001
    \end{smallmatrix}\right).
  $$
  The Griesmer upper bound is not attained for $w\in \{4,5,7,11\}$ while it is for all other cases $w>3$ For $w\in\{5,7\}$ the coding upper bound is attained.
\end{proof}
The stored generator matrices of $[m_2^{(3)}(5,w),6]_2$ codes in the database of \emph{best known linear codes} (BKLC) in \texttt{Magma} give optimal examples for $w\in\{5,6,11,19\}$. 
Note that for $w=6$ we can take any $[18,6,8]_2$ Griesmer code and for $w=19$ we can take any $[73,6,36]_2$ Griesmer code.

\begin{table}[htp]
  \begin{center}
    \begin{tabular}{rrcc}
      \hline 
      $w$ & $m_2^{(3)}(5,w)$ & construction & upper bound \\ 
      \hline
      4 &  7 & projective base & Lemma~\ref{lemma_projective_base} \\ 
      5 & 11 & BKLC/ILP & Coding upper bound \\ 
      6 & 18 & BKLC/ILP & Griesmer upper bound \\
      7 & 19 & plus point & Coding upper bound \\
      8 & 32 & $[5]-[4]$ & Griesmer upper bound \\
      9 & 33 & plus point & Griesmer upper bound \\
      10 & 34 & plus point & Griesmer upper bound \\
      11 & 38 & BKLC/ILP & Lemma~\ref{lemma_n_2_6_2_11} \\
      12  & 48 & $[5]-[3]$ & Griesmer upper bound \\
      13  & 49 & plus point & Griesmer upper bound \\
      14  & 56 & $[5]-[2]$& Griesmer upper bound \\
      15 & 63 & $[5]$ & Griesmer upper bound \\
      16 & 64 & plus point & Griesmer upper bound \\
      17 & 65 & plus point & Griesmer upper bound \\
      18 & 66 & plus point  & Griesmer upper bound \\
      19 & 73 & BKLC/ILP & Griesmer upper bound \\
      20 & 80 & $2[5]-[4]-[3]$ & Griesmer upper bound \\
      21 & 81 & plus point & Griesmer upper bound \\
      22 & 88 & $2[5]-[4]-[2]$ & Griesmer upper bound \\
      23 & 95 & sum construction & Griesmer upper bound \\
      24 & 96 & plus point & Griesmer upper bound \\
      25 & 97 & plus point & Griesmer upper bound \\
      26 & 104 & $2[5]-[3]-[2]$ & Griesmer upper bound \\
      27 & 111 & sum construction & Griesmer upper bound \\
      28 & 112 & plus point & Griesmer upper bound \\
      29 & 119 & sum construction & Griesmer upper bound \\
      30 & 126 & sum construction & Griesmer upper bound \\
      31 & 127 & plus point & Griesmer upper bound \\
      32 & 128 & plus point & Griesmer upper bound \\
      33 & 129 & plus point & Griesmer upper bound \\
      34 & 136 & $3[5]-[4]-[3]-[2]$ & Griesmer upper bound \\
      \hline
    \end{tabular}  
    \caption{Exact values for $m_2^{(3)}(5,w)$.}
    \label{table_n_2_6_2_s}
  \end{center}
\end{table}

\begin{lemma}
  \label{lemma_n_2_7_s_9_and_10}
  We have $m_2^{(4)}(6,9)=27$ and  $m_2^{(4)}(6,10)=28$.
\end{lemma}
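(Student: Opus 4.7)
The plan is to mirror the two-step pattern used in Lemma~\ref{lemma_n_2_6_2_11}: provide explicit lower-bound constructions and then squeeze the upper bounds by combining a projection argument, a hyperplane-restriction argument, and a small ILP computation.

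For the lower bounds $m_2^{(4)}(6,9)\ge 27$ and $m_2^{(4)}(6,10)\ge 28$, I would look up an explicit generator matrix of a $[27,7]_2$ code whose $4$-subspace multiplicities in $\PG(6,2)$ are all bounded by $9$ (equivalently $d_2\ge 18$), either from the BKLC database in \texttt{Magma} or from a Solomon--Stiffler style construction; a $28$-point example is then obtained by adjoining one additional point.

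For the upper bound $m_2^{(4)}(6,9)\le 27$, let $\cM$ be an $(n,9)^{(4)}$-arc in $\PG(6,2)$. The first step is to control the point multiplicities: projecting from a point $P$ of multiplicity $t$ gives an $(n-t,9-t)^{(3)}$-arc in $\PG(5,2)$, so $n-t\le m_2^{(3)}(5,9-t)$, and reading off Table~\ref{table_n_2_6_2_s} shows that $t\ge 2$ already forces $n\le 21$; hence for the range of interest $\cM$ is actually a set. The second step is to bound hyperplane multiplicities: for any hyperplane $H\cong\PG(5,2)$ the $4$-subspaces of $H$ are exactly the hyperplanes of $H$, so $\cM(H)\le m_2^{(4)}(5,9)=17$, where the $17$ comes from the Griesmer bound applied to a putative $[n',6,n'-9]_2$ code and is attained by the $[17,6,8]_2$ Griesmer code. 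One then fixes a copy of an extremal $[17,6,8]_2$ code as the support of a maximum-weight hyperplane $H^\star$, classifies the (presumably short) list of such codes, and finishes by an ILP on the remaining $v_7-v_6=64$ points outside $H^\star$ ruling out $n\ge 28$. The argument for $m_2^{(4)}(6,10)\le 28$ is entirely analogous, with the hyperplane bound $m_2^{(4)}(5,10)=18$ in place of $17$; the only extra nuisance is that a point of multiplicity $2$ is no longer immediately excluded (the projection bound only gives $n\le 34$ in that case), so that case has to be handled either inside the ILP or by a short ad-hoc argument.

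The main obstacle is that both the plain Griesmer and the coding upper bound overshoot by several units, so the whole argument rests on prescribing the internal structure of a maximum-weight hyperplane tightly enough to keep the finishing ILP tractable. Pinning down the classification of extremal $[17,6,8]_2$ and $[18,6,8]_2$ codes and extending them consistently to $\PG(6,2)$ is the technically delicate step, exactly in the spirit of the $[21,5,10]_2$ case in the proof of Lemma~\ref{lemma_n_2_6_2_11}.
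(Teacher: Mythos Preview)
Your plan has the right spirit but contains a factual error that undermines the $w=9$ case and leaves a gap in the $w=10$ case.

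For $w=9$ you claim the coding upper bound overshoots; in fact it does not. Chaining $m_2^{(4)}(5,9)=17$ with $m_2^{(5)}(6,17)=27$ (no $[28,7,11]_2$ code exists) lands exactly on $27$, and this is precisely what the paper does: it simply cites the coding upper bound and performs no ILP whatsoever for $w=9$. Your proposed alternative---prescribe a $17$-hyperplane and run an ILP---would moreover be incomplete as stated, since you never argue that a hyperplane of multiplicity exactly $17$ must occur; if every hyperplane had weight $\le 16$ there would be nothing to prescribe, and $m_2^{(5)}(6,16)\ge 28$, so that sub-case is not a priori vacuous.

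For $w=10$ your first step does match the paper: there are exactly two $[18,6,8]_2$ codes, and prescribing either as an $18$-hyperplane (plus one extra point to make the span $7$-dimensional) gives $\#\cM\le 28$ via ILP. But you stop there, and the paper does not. It still has to treat the case where every hyperplane has weight $\le 17$: it bounds point multiplicities by $2$ via $m_2^{(3)}(5,7)=19$, observes that any solid $S$ with $\cM(S)\ge 7$ already forces $\#\cM\le 7\cdot(10-7)+7=28$ by summing over the seven $4$-spaces through $S$, and finally---once all solids have weight $\le 6$---enumerates the four $[10,5,4]_2$ codes as the possible $10$-point $4$-space configurations and closes with a second round of ILP. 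One could shortcut this entire tail by invoking $m_2^{(5)}(6,17)=27$ again, but the paper keeps the $w=10$ argument self-contained rather than leaning on the $w=9$ coding input; either way, your write-up needs to acknowledge and dispose of the ``no $18$-hyperplane'' branch.
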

\begin{proof}
  An example showing $m_2^{(4)}(6,9)\ge 27$ is given by $$
    \left(\begin{smallmatrix}
    000000100000000011111111111\\
    000001000001111100000111111\\
    000010001110111100011000011\\
    000100010111000101111000101\\
    001000011001001110101011100\\
    010000001011011010110101010\\
    100000010100110111010110001
    \end{smallmatrix}\right),
  $$
  so that adding an arbitrary point gives $m_2^{(4)}(6,10)\ge 28$. For $w=9$ the coding upper bound is attained. Next we consider a multiset $\cM$ of points in $\PG(6,2)$ such that every subspace of codimension two contains at most ten points. Starting from $m_2^{(5)}(6,10)=18$ we have used \texttt{LinCode} \cite{bouyukliev2021computer} to enumerate the two non-isomorphic $[18,6,8]_2$ codes. 
Prescribing the two possible configurations and an arbitrary point making the span $7$-dimensional, we have used an ILP computation to conclude $\#\cM\le 28$. In the following we assume $\cM(H)\le 17$ for each hyperplane. Since $m_2^{(3)}(5,7)=19$ we also assume $\cM(P)\le 2$ for every point $P$. If there exists a solid $S$ with $\cM(S)\ge 7$, then we have $\#\cM \le 7\cdot 3+7=28$, so that we assume $\cM(S)\le 6$ for every solid $S$. We have used \texttt{LinCode} \cite{bouyukliev2021computer} to enumerate the four non-isomorphic $[10,5,4]_2$ codes. 
  Prescribing the four possible configurations and two points that make the span $7$-dimensional, we have used ILP computations to conclude $\#\cM\le 28$.
\end{proof}

\begin{lemma}
  \label{lemma_n_2_7_s_20}
  We have $m_2^{(4)}(6,20)=71$.
\end{lemma}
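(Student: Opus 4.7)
The plan is to prove $m_2^{(4)}(6,20)\ge 71$ and $m_2^{(4)}(6,20)\le 71$ separately, following the template of Lemmas~\ref{lemma_n_2_6_2_11} and \ref{lemma_n_2_7_s_9_and_10}.

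For the lower bound I would take the multiset in $\PG(6,2)$ associated with any binary $[71,7,34]_2$ Griesmer code (for instance from the BKLC database in \texttt{Magma}, or an explicit generator matrix can be displayed). Since $g_2(7,34)=34+17+9+5+3+2+1=71$, Theorem~\ref{thm:grriesmer1} applies and yields $d_2=51$ for such a code, hence the maximal $4$-subspace multiplicity of the associated multiset equals $n-d_2=71-51=20$.

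For the upper bound, assume $\cM$ is a multiset in $\PG(6,2)$ with $n:=|\cM|\ge 72$ and every $4$-subspace of multiplicity at most $20$. Projecting from a point of maximum multiplicity $t$ onto a hyperplane yields an arc in $\PG(5,2)$ of size $n-t$ whose solids all have multiplicity at most $20-t$, so $n\le t+m_2^{(3)}(5,20-t)$. The values in Table~\ref{table_n_2_6_2_s} give $n\le 68$ for every $t\ge 2$, contradicting $n\ge 72$; thus $\cM$ is a set. Next, for every hyperplane $H\cong\PG(5,2)$ the restriction of $\cM$ to $H$ is a multiset whose hyperplanes (of $H$) have multiplicity at most $20$, so $\cM(H)\le m_2^{(4)}(5,20)=38$. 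Finally, if $\max_H\cM(H)\le 37$ the associated $[n,7]_2$ code would have minimum distance $d\ge n-37\ge 35$, contradicting the classical Griesmer bound $g_2(7,35)=73>72$; hence some hyperplane $H^\star$ attains $\cM(H^\star)=38$.

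By~\cite{bouyukliev2001optimal} the $[38,6,18]_2$ code is unique, so $\cM\cap H^\star$ is completely determined up to isomorphism. Theorem~\ref{thm:genweights} shows that inside $H^\star$ the maximum solid multiplicity is $w_3=11$, which forces, for every $4$-subspace $S\not\subseteq H^\star$, the constraint $\cM(S\setminus H^\star)\le 20-\cM(S\cap H^\star)$ on the $16$-point affine set $S\setminus H^\star$. I would then prescribe the unique Griesmer configuration on $H^\star$ together with one arbitrary further point outside $H^\star$ (to fix the orbit under the stabilizer of $\cM\cap H^\star$) and run an ILP in the $64$ indicator variables for the points of $\PG(6,2)\setminus H^\star$. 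The expected conclusion is $|\cM|\le 38+33=71$, contradicting $n\ge 72$. The main obstacle is this final ILP step; its size ($64$ variables, roughly $2604$ inequalities) is comparable to those already handled in Lemma~\ref{lemma_n_2_7_s_9_and_10}, and the rigid structure inherited from the prescribed Griesmer code on $H^\star$ should keep the feasibility region small enough for the computation to terminate quickly.
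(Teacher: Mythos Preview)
Your proposal is correct and follows essentially the same line as the paper's own proof: both establish the lower bound via a $[71,7,34]_2$ Griesmer code (the paper displays an explicit generator matrix, you invoke Theorem~\ref{thm:grriesmer1}), and both derive the upper bound by prescribing the unique $[38,6,18]_2$ configuration on a hyperplane of multiplicity $38$ and finishing with an ILP. The only cosmetic differences are that you first reduce to simple point sets via a projection argument and handle the ``all hyperplanes $\le 37$'' case directly with the classical Griesmer bound, whereas the paper covers that case by quoting $m_2^{(5)}(6,37)=71$; these amount to the same thing.
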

\begin{proof}
  An example showing $m_2^{(4)}(6,20)\ge 71$ is given by
  $$ 
    \left(\begin{smallmatrix}
    00000010000000000000000000000000000000111111111111111111111111111111111\\
    00000100000000000000011111111111111111000000000000000011111111111111111\\
    00001000000011111111100000000011111111000000001111111100000000011111111\\
    00010000111100001111100000111100001111000011110000111100000111100001111\\
    00100001001101110001100011001100110011001100110011001100111001100110011\\
    01000001010110110010101101010101010101110101010101010101011010101010101\\
    10000001111011010100110110100110010110011010011001011010001011001101001
    \end{smallmatrix}\right).
  $$
  Next we consider a multiset $\cM$ of points in $\PG(6,2)$ such that every subspace of codimension two contains at most $20$ points. Starting from $m_2^{(4)}(5,20)=38$ we have used \texttt{LinCode} \cite{bouyukliev2021computer} to construct the unique  $[38,6,18]_2$ code \cite{bouyukliev2001optimal}.
  Prescribing the corresponding unique configuration and a further point that makes the span $7$-dimensional, we have used an ILP computation to conclude $\#\cM\le 71$.
  Observing $m_2^{(5)}(6,37)=71$ finishes the proof.
\end{proof}

\begin{lemma}
  \label{lemma_n_2_7_s_22_and_23}
  We have $m_2^{(4)}(6,22)=82$ and  $m_2^{(4)}(6,23)=83$.
\end{lemma}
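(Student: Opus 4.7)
The plan is to follow the strategy of Lemma~\ref{lemma_n_2_7_s_20}. For the lower bounds, I would exhibit an explicit $7\times 82$ generator matrix realizing an $(82,22)^{(4)}$-arc in $\PG(6,2)$, and then append one arbitrary further point to obtain an $(83,23)^{(4)}$-arc; this gives $m_2^{(4)}(6,22)\ge 82$ and $m_2^{(4)}(6,23)\ge 83$.

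For the upper bound on $m_2^{(4)}(6,22)$, the decisive observation is that if $\cM$ is an $(n,22)^{(4)}$-arc in $\PG(6,2)$ and $H\cong\PG(5,2)$ is any hyperplane, then the restriction $\cM|_H$ is an $(\cM(H),22)^{(4)}$-arc in $\PG(5,2)$, so $\cM(H)\le m_2^{(4)}(5,22)$. The first step is thus to pin down $m_2^{(4)}(5,22)$ and the classification of the extremal arcs in $\PG(5,2)$ attaining it, which amounts to a binary $[n,6,d]_2$ code problem that can be read off Grassl's tables~\cite{grassl-tables} and verified by \texttt{LinCode}~\cite{bouyukliev2021computer}. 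Prescribing each extremal configuration inside a fixed hyperplane $H^\star$ of $\PG(6,2)$, together with one arbitrary further point outside $H^\star$ making the span seven-dimensional, I would set up and solve an integer linear program with one variable per point of $\PG(6,2)$ and one constraint $\cM(S)\le 22$ for every $4$-dimensional subspace $S$, expecting the ILP to certify $\#\cM\le 82$ in every case. The residual possibility that no hyperplane attains the extremal value is ruled out by the double count $v_6\cdot n=\sum_H \cM(H)\le v_7\bigl(m_2^{(4)}(5,22)-1\bigr)$ or, if this is too weak, by iterating the classification-plus-ILP step at the next smaller admissible hyperplane multiplicity.

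The argument for $m_2^{(4)}(6,23)\le 83$ proceeds in the same way with $22$ replaced by $23$: determine $m_2^{(4)}(5,23)$, enumerate the extremal $(m_2^{(4)}(5,23),23)^{(4)}$-arcs in $\PG(5,2)$, prescribe each in a hyperplane of $\PG(6,2)$, and discharge the remaining cases by an ILP.

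The main obstacle I foresee is the combinatorial complexity of the enumeration combined with the ILP. The list of inequivalent extremal arcs in $\PG(5,2)$ may be substantially larger for $w\in\{22,23\}$ than for the $w=20$ case handled in Lemma~\ref{lemma_n_2_7_s_20}, and each class produces an ILP on $127$ point variables with one constraint per $4$-dimensional subspace of $\PG(6,2)$. Without branching on a secondary invariant (for instance the multiplicity of a selected solid or of a second hyperplane, as already exploited in Lemmas~\ref{lemma_n_2_7_s_9_and_10} and~\ref{lemma_n_2_7_s_20}) and without symmetry reduction modulo the stabilizer of the prescribed configuration, the ILPs may not terminate in reasonable time, so some case-splitting of this sort is likely unavoidable.
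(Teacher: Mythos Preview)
Your overall strategy matches the paper's, but you overlook a significant shortcut for $w=22$: the Griesmer upper bound of Theorem~\ref{thm_griesmer_gen_ham} already gives $m_2^{(4)}(6,22)\le 82$. Indeed, with $k=7$ and $r=2$ one computes $g_2^{(2)}(7,60)=60+10+5+3+2+1=81\le 82$ while $g_2^{(2)}(7,61)=84>83$, so no hyperplane classification or ILP is needed for this case at all. The paper simply quotes the Griesmer bound here and produces the lower bound from an $[82,7,40]_2$ Griesmer code, exactly as you propose.

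For $w=23$ your plan is the paper's plan, and your anticipated difficulty is real. The paper carries out precisely the iteration you sketch: first it prescribes the unique $[45,6,22]_2$ configuration (so $m_2^{(4)}(5,23)=45$) in a hyperplane and runs an ILP; then, assuming $\cM(H)\le 44$, it uses a projection argument to force point multiplicity at most one, classifies the unique projective $[44,6,21]_2$ code, and runs a second ILP; finally, assuming $\cM(H)\le 43$, it bounds solid multiplicities by $12$, prescribes the unique $[23,5,12]_2$ configuration in a solid, and finishes with a third ILP. So your expectation that a single classification--ILP pass would not suffice, and that one must branch on a secondary invariant such as solid multiplicity, is borne out exactly.
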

\begin{proof}
  An example showing $m_2^{(4)}(6,22)\ge82$ is given by each of the $11$ $[82,7,40]_2$ Griesmer codes \cite{bouyukliev2001optimal}. One generator matrix is e.g.\ given by 
  $$ 
    \left(\begin{smallmatrix}
    0000001000000000000000000000000000000000000111111111111111111111111111111111111111\\
    0000010000000000000000001111111111111111111000000000000000000011111111111111111111\\
    0000100000011111111111110000000111111111111000000011111111111100000000111111111100\\
    0001000011100000011111110001111000001111111000111100000001111100001111000001111100\\
    0010000101100011100011110110011000110000111011001100011110011100110011001110001100\\
    0100000110101100101100111010101001010011001101010101100110101101010101010110010100\\
    1000000111010101010101011101001010010101010110100110101010110110010110011010100100
    \end{smallmatrix}\right),
  $$
  so that adding an arbitrary point gives $m_2^{(4)}(6,23)\ge 83$. For $s=22$ the Griesmer upper bound is attained. Next we consider a multiset $\cM$ of points in $\PG(6,2)$ such that every subspace of codimension two contains at most $23$ points. Starting from $m_2^{(4)}(5,23)=34$ we have used \texttt{LinCode} \cite{bouyukliev2021computer} to construct the unique $[45,6,22]_2$ code \cite{bouyukliev2001optimal}.
  Prescribing the corresponding configuration and an arbitrary point making the span $7$-dimensional, we have used an ILP computation to conclude $\#\cM\le 83$. In the following we assume $\cM(H)\le 44$ for each hyperplane. Since $m_2^{(3)}(5,20)=80$ we also assume $\cM(P)\le 1$ for every point $P$. We have used \texttt{LinCode} \cite{bouyukliev2021computer} to enumerate the unique $[44,6,21]_2$ code with maximum column multiplicity one. 
  Prescribing the corresponding configuration and an arbitrary point making the span $7$-dimensional, we have used an ILP computation to conclude $\#\cM\le 83$. In the following we assume $\cM(H)\le 43$ for each hyperplane. 
  If there exists a solid $S$ with $\cM(S)\ge 13$, then we have $\#\cM \le 7\cdot 10+13=83$, so that we assume $\cM(S)\le 12$ for every solid $S$. We have used \texttt{LinCode} \cite{bouyukliev2021computer} to construct the unique $[23,11,5]_2$ code \cite{simonis200023}. 
  Prescribing the corresponding configuration and two points that make the span $7$-dimensional, we have used an ILP computation to conclude $\#\cM\le 83$.
\end{proof}

\begin{proposition}
  If $w\ge 24$ or $w\in\{7,11,14,16,\dots,19,22\}$, then $m_2^{(4)}(6,s)$ is given by the Griesmer upper bound. Moreover, we have $m_2^{(4)}(6,5)=8$, $m_2^{(4)}(6,6)=12$, 
$m_2^{(4)}(6,8)=20$, $m_2^{(4)}(6,9)=27$, $m_2^{(4)}(6,10)=28$, $m_2^{(4)}(6,12)=36$, $_2^{(4)}(6,13)=43$, $m_2^{(4)}(6,15)=51$, $m_2^{(4)}(6,20)=71$, $m_2^{(4)}(6,21)=75$, and 
$m_2^{(4)}(6,23)=83$.
\end{proposition}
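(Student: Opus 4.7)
The plan is to mirror the structure of the preceding proposition for $m_2^{(3)}(5,w)$. I will split the values of $w$ into three groups and treat lower and upper bounds separately. The tight value $m_2^{(4)}(6,5)=8$ comes directly from Lemma~\ref{lemma_projective_base} applied with $k=7$. For the exceptional values $w\in\{9,10,20,22,23\}$ the lower and upper bounds are already established by Lemmas~\ref{lemma_n_2_7_s_9_and_10},~\ref{lemma_n_2_7_s_20}, and~\ref{lemma_n_2_7_s_22_and_23}. All remaining tight Griesmer-bound cases would be handled by the Solomon--Stiffler style canonical constructions of type $t[6]-\sum_i\varepsilon_i[i]$, augmented by the ``plus point'' operation (which increases $w$ by one and $n$ by one) and by the sum construction of Lemma~\ref{lemma_sum} applied to copies of $\PG(5,2)$ and the small base cases. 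This produces the entries of the table for $w\ge 24$ and for $w\in\{7,11,14,16,17,18,19,22\}$.

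For the remaining non-Griesmer entries $w\in\{6,8,12,13,15,21\}$, I would supply explicit $[n,7]_2$ generator matrices, either pulled from the BKLC database in \texttt{Magma} (noting, for instance, that any $[36,7,16]_2$, $[43,7,20]_2$, $[51,7,24]_2$, $[75,7,36]_2$ Griesmer code realizes the corresponding $w$) or produced by a small ILP guided by the Solomon--Stiffler construction for $r=1$ plus ad hoc modifications. For each such $w$ I would verify by direct weight-enumerator inspection that the maximum multiplicity of a $4$-flat is indeed the claimed value.

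For the upper bounds, the Griesmer upper bound $g_2^{(4)}(7,\cdot)$ suffices whenever it matches the claimed value. In all other cases I would first try the coding upper bound $m_2^{(4)}(6,w)\le m_2^{(5)}(6,m_2^{(4)}(5,w))$, using the values of $m_2^{(4)}(5,w)$ determined in Section~\ref{subsec_binary} and of $m_2^{(5)}(6,\cdot)$ derived from the best known $7$-dimensional binary codes. When neither bound is tight, I would run the three-step attack already used in Lemmas~\ref{lemma_n_2_7_s_20} and~\ref{lemma_n_2_7_s_22_and_23}: (i)~use \texttt{LinCode} to enumerate all $[m_2^{(4)}(5,w),6]_2$ codes whose associated multiset has maximum $4$-flat multiplicity $w$; (ii)~prescribe each such configuration together with an additional point forcing a $7$-dimensional span; (iii)~settle the resulting bounded extension problem by an ILP. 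If no hyperplane attains the extremal value, iterate by decreasing the hyperplane-multiplicity assumption, each time layering in extra structural constraints obtained from earlier results (bounded point multiplicity via $m_2^{(3)}(5,\cdot)$, bounded solid multiplicity via a double-counting argument of the form $\#\cM\le 7\cdot a+b$).

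The main obstacle is step~(i) of the ILP attack: for intermediate values such as $w=13$ and $w=15$ the number of non-isomorphic inner codes that must be enumerated may be too large for a single pass, and the extension ILPs can be expensive. The fallback, following Lemma~\ref{lemma_n_2_7_s_22_and_23}, is the iterative hyperplane-slicing argument, which progressively tightens the structural hypotheses until the \texttt{LinCode} enumeration becomes tractable. A careful case split on the triple (maximum point multiplicity, maximum solid multiplicity, maximum hyperplane multiplicity) should keep every intermediate enumeration small enough for the ILP step to certify the claimed bound.
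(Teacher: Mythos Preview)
Your plan matches the paper's proof almost exactly: the same split into lemma-handled cases ($w=5,9,10,20,22,23$), Solomon--Stiffler plus added points for the Griesmer-tight range, explicit BKLC/ILP constructions for the six exceptional values $w\in\{6,8,12,13,15,21\}$, and the Griesmer upper bound elsewhere. Two small corrections are worth noting. First, your anticipated obstacle never arises: for every one of the six non-Griesmer values $w\in\{6,8,12,13,15,21\}$ the coding upper bound is already tight, so the iterated hyperplane-slicing ILP attack you describe is unnecessary here (it is needed only inside Lemmas~\ref{lemma_n_2_7_s_9_and_10}, \ref{lemma_n_2_7_s_20}, \ref{lemma_n_2_7_s_22_and_23}). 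Second, the codes $[36,7,16]_2$, $[43,7,20]_2$, $[51,7,24]_2$, $[75,7,36]_2$ are \emph{not} Griesmer codes for $d_1$, and having those $[n,k,d_1]$ parameters does not by itself force $w_4\le w$; you do need the explicit verification you mention (the paper in fact obtains $w=8,12,15$ by adding a point to the $w-1$ construction rather than by citing a specific $d_1$-optimal code). Also, the relevant Griesmer function is $g_2^{(2)}(7,\cdot)$, not $g_2^{(4)}(7,\cdot)$, since $k-r-1=7-4-1=2$.
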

\begin{proof}
  For $m_2^{(4)}(6,9)=27$ and $m_2^{(4)}(6,10)=28$ we refer to Lemma~\ref{lemma_n_2_7_s_9_and_10}. For $m_2^{(4)}(6,20)=71$ we refer to Lemma~\ref{lemma_n_2_7_s_20}. 
  For $m_2^{(4)}(6,22)=82$ and $m_2^{(4)}(6,23)=83$ we refer to Lemma~\ref{lemma_n_2_7_s_22_and_23}. 
  We consider the following constructions
  \begin{itemize}
     \item types $t[6]$, $t[6]-[5]$, $t[6]-[4]$, $t[6]-[3]$, and $t[6]-[2]$
           for $t\ge 1$;
     \item types $t[6]-[5]-[4]$, $t[6]-[5]-[3]$, $t[6]-[5]-[2]$,
          $t[6]-[4]-[3]$, $t[6]-[4]-[2]$, and $t[6]-[3]-[2]$ for $t\ge 2$;
     \item types $t[6]-[5]-[4]-[3]$, $t[6]-[5]-[4]-[2]$, $t[6]-[5]-[3]-[2]$,
           and $t[6]-[4]-[3]-[2]$ for $t\ge 3$; and
     \item type $t[6]-[5]-[4]-[3]-[2]$ for $t\ge 4$,
  \end{itemize}
  as well as adding up to four additional points to those constructions. By selecting the removed subspaces more carefully than by a chain, we can also have constructions for $[6]-[3]-[2]$, $2[6]-[5]-[3]-[2]$, $2[6]-[4]-[3]-[2]$, and $3[6]-[5]-[4]-[3]-[2]$, i.e.\ for $w\in\{27,43,51,67\}$, which meet the Griesmer upper bound.  The case $w=5$ is treated in Lemma~\ref{lemma_projective_base}. For $w\in\{8,12,15,38\}$ the best known construction can be obtained by adding an arbitrary point to a construction for $w-1$. For $w\in\{6,7,11,13,14,21\}$ 
  we give explicit examples found by ILP searches.
  $$
    \left(\begin{smallmatrix}
    000000100111\\
    000001001011\\
    000010010011\\
    000100011100\\
    001000011111\\
    010000001101\\
    100000011001
    \end{smallmatrix}\right),
    \left(\begin{smallmatrix}
    0000001000001111111\\
    0000010001110001111\\
    0000100110010010111\\
    0001000011100110011\\
    0010000010111100101\\
    0100000101101101001\\
    1000000011011011001
    \end{smallmatrix}\right),\\
    \left(\begin{smallmatrix}
    00000010000000000000111111111111111\\
    00000100000111111111000001111111111\\
    00001000011000011111001110000011111\\
    00010001111001100111010110001100001\\
    00100000101110100011011010010101110\\
    01000001011011111001100100101000110\\
    10000000110011101010100111000111010
    \end{smallmatrix}\right),\\
  $$
  $$
    \left(\begin{smallmatrix}
    0000001000000000000011111111111111111111111\\
    0000010000011111111100000000011111111111111\\
    0000100001100001111100001111100000001111111\\
    0001000110101110011100110111100011110001111\\
    0010000011010110001111001001100100110010111\\
    0100000111011000100101010010101001010111011\\
    1000000100101011100110111000110100010011101
    \end{smallmatrix}\right),
  $$
  $$
    \left(\begin{smallmatrix}
    00000010000000000000000001111111111111111111111100\\
    00000100000000111111111110000000000011111111111100\\
    00001001111111000000011110000000111100001111111100\\
    00010000000111000011101110011111000100010001111111\\
    00100000011001001100110110100011011101110010001111\\
    01000000101010011111000011101101001010110100010111\\
    10000001010010100101011010110100101111011000100111
    \end{smallmatrix}\right),
  $$
  $$ 
    \left(\begin{smallmatrix}
    000000100000000000000000000000000000001111111111111111111111111111111111111\\
    000001000000000000000111111111111111110000000000000000000111111111111111111\\
    000010000000001111111000000001111111110000000001111111111000000000111111111\\
    000100000111110001111000111110000111110000111110000111111000001111000011111\\
    001000001001110110011011000110011000110011000110011000111001110011001100111\\
    010000011110011010101101011010101011011101011010101001011010110101010101001\\
    100000011010101101001110101011001001100110101011001011101100010110011010011
   \end{smallmatrix}\right),
  $$
  For $w\in\{36,37,39\}$ we can take $[138,7,68]_2$, $[145,7,72]_2$, and $[153,7,76]_2$ Griesmer codes \cite{van1981smallest}. 
  For $w\in\{6,8,12,13,15,21\}$ the coding upper bound is attained.
  All other upper bounds are given by the Griesmer upper bound.
\end{proof}

\begin{table}[htp]
  \begin{center}
    \begin{tabular}{rrcc}
      \hline 
      $w$ & $m_2^{(4)}(6,w)$ & construction & upper bound \\ 
      \hline
      5 &  8 & projective base & Lemma~\ref{lemma_projective_base} \\ 
      6 & 12 & BKLC/ILP & Coding upper bound \\ 
      7 & 19 & BKLC/ILP & Griesmer upper bound \\
      8 & 20 & plus point & Coding upper bound \\
      9 & 27 & BKLC/ILP & Coding upper bound \\
      10 & 28 & plus point & Lemma~\ref{lemma_n_2_7_s_9_and_10} \\
      11 & 35 & BKLC/ILP & Griesmer upper bound \\
      12 & 36 & plus point & Coding upper bound \\
      13 & 43 & BKLC/ILP & Coding upper bound \\
      14 & 50 & BKLC/ILP & Griesmer upper bound \\
      15 & 51 & plus point & Coding upper bound \\
      16 & 64 & $[6]-[5]$ & Griesmer upper bound \\
      17 & 65 & plus point& Griesmer upper bound \\
      18 & 66 & plus point & Griesmer upper bound \\
      19 & 67 & plus point & Griesmer upper bound \\
      20 & 71 & BKLC/ILP & Lemma~\ref{lemma_n_2_7_s_20} \\
      21 & 75 & BKLC/ILP & Coding upper bound \\
      22 & 82 & BKLC/ILP & Griesmer upper bound \\
      23 & 83 & plus point & Lemma~\ref{lemma_n_2_7_s_22_and_23}\\
      24 & 96 & $[6]-[4]$ & Griesmer upper bound \\
      25 & 97 & plus point & Griesmer upper bound \\
      26 & 98 & plus point & Griesmer upper bound \\
      27 & 105 & $[6]-[3]-[2]$ & Griesmer upper bound \\
      28 & 112 & $[6]-[3]$ & Griesmer upper bound \\
      29 & 113 & plus point & Griesmer upper bound \\
      30 & 120 & $[6]-[2]$ & Griesmer upper bound \\
      31 & 127 & $[6]$ & Griesmer upper bound \\
      32 & 128 & plus point & Griesmer upper bound \\
      33 & 129 & plus point & Griesmer upper bound \\
      34 & 130 & plus point & Griesmer upper bound \\
      35 & 131 & plus point & Griesmer upper bound \\
      36 & 138 & BKLC/ILP & Griesmer upper bound \\
      37 & 145 & BKLC/ILP & Griesmer upper bound \\
      38 & 146 & plus point & Griesmer upper bound \\
      39 & 153 & BKLC/ILP & Griesmer upper bound \\
      40 & 160 & $2[6]-[5]-[4]$& Griesmer upper bound \\
      41 & 161 & plus point & Griesmer upper bound \\
      42 & 162 & plus point & Griesmer upper bound \\
      43 & 169 & $2[6]-[5]-[3]-[2]$ & Griesmer upper bound \\
      44 & 176 & $2[6]-[5]-[3]$ & Griesmer upper bound \\
      45 & 177 & plus point & Griesmer upper bound \\
      \hline
    \end{tabular}  
    \caption{Exact values for $m_2^{(4)}(6,w)$ -- part 1.}
    \label{table_n_2_7_2_s_part1}
  \end{center}
\end{table}

\begin{table}[htp]
  \begin{center}
    \begin{tabular}{rrcc}
      \hline 
      $w$ & $m_2^{(4)}(6,w)$ & construction & upper bound \\ 
      \hline
      46 & 184 & $2[6]-[5]-[2]$ & Griesmer upper bound \\
      47 & 191 & sum construction & Griesmer upper bound \\
      48 & 192 & plus point & Griesmer upper bound \\
      49 & 193 & plus point & Griesmer upper bound \\
      50 & 194 & plus point & Griesmer upper bound \\
      51 & 201 & $2[6]-[4]-[3]-[2]$ & Griesmer upper bound \\
      52 & 208 & $2[6]-[4]-[3]$ & Griesmer upper bound \\
      53 & 209 & plus point & Griesmer upper bound \\
      54 & 216 & $2[6]-[4]-[2]$ & Griesmer upper bound \\
      55 & 223 & sum construction & Griesmer upper bound \\
      56 & 224 & plus point & Griesmer upper bound \\
      57 & 225 & plus point & Griesmer upper bound \\
      58 & 232 & sum construction & Griesmer upper bound \\
      59 & 239 & sum construction & Griesmer upper bound \\
      60 & 240 & plus point & Griesmer upper bound \\
      61 & 247 & sum construction & Griesmer upper bound \\
      62 & 254 & sum construction & Griesmer upper bound \\
      63 & 255 & plus point & Griesmer upper bound \\
      64 & 256 & plus point & Griesmer upper bound \\
      65 & 257 & plus point & Griesmer upper bound \\
      66 & 258 & plus point & Griesmer upper bound \\
      67 & 265 & $3[6]-[5]-[4]-[3]-[2]$ & Griesmer upper bound \\
      68 & 272 & $3[6]-[5]-[4]-[3]$ & Griesmer upper bound \\
      69 & 273 & plus point & Griesmer upper bound \\
      70 & 280 & $3[6]-[5]-[4]-[2]$ & \\
      \hline
    \end{tabular}  
    \caption{Exact values for $m_2^{(4)}(6,w)$ -- part 2.}
    \label{table_n_2_7_2_s_part2}
  \end{center}
\end{table}

The stored generator matrices of $[m_2^{(4)}(6,w),7]_2$ codes in the database of \emph{best known linear codes} (BKLC) in \texttt{Magma} give optimal examples for \[w\in\{6,7,9,11,13,14,20,21,22,36,37,39\}.\]

\begin{lemma}
  \label{lemma_n_2_7_3_s_4_and_5}
  We have $m_2^{(3)}(6,4)=9$ and $m_2^{(3)}(6,5)=19$.
\end{lemma}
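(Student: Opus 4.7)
The plan splits into two independent arguments, one for each value, each requiring an upper bound by a projection argument and a matching construction.

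For $m_2^{(3)}(6,4)\le 9$, I would split on whether some point of $\cM$ has multiplicity one. Projecting from a multiplicity-$1$ point $P$ onto a hyperplane $\cong\PG(5,2)$ sends solids through $P$ to planes, so the induced multiset is an $(|\cM|-1,3)^{(2)}$-arc, and Lemma~\ref{lemma_n_2_6_3_3} gives $|\cM|-1\le m_2^{(2)}(5,3)=8$. Otherwise every point has multiplicity $\ge 2$; any three support points lie in a common solid (three points span a projective subspace of dimension at most two), which would then have multiplicity at least six, so the support has at most two points and $|\cM|\le 4$. For the matching construction I would take the nine points $e_1,\ldots,e_7,g_8,g_9\in\F_2^7$ with $g_8=e_1+e_2+e_3+e_4+e_5$ and $g_9=e_3+e_4+e_5+e_6+e_7$: the $[9,2]_2$ code of linear dependencies is generated by these two relations, which together with their sum yield three nonzero codewords all of weight six, so no five of the nine vectors can fit into a $4$-dimensional (i.e.\ projective solid) subspace, while $\{e_1,e_2,e_3,e_4\}$ shows that equality is attained.

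For $m_2^{(3)}(6,5)\le 19$, the plan is a case split on the maximum plane multiplicity $w_2$. If $w_2\le 3$, then $\cM$ is an $(|\cM|,3)^{(2)}$-arc and $|\cM|\le m_2^{(2)}(6,3)=11$ by Lemma~\ref{lemma_n_2_7_4_3}. If $w_2\ge 4$, I would project from a plane $\delta$ with $\cM(\delta)=w_2$ onto a complementary solid $\cong\PG(3,2)$: solids through $\delta$ correspond to the $15$ points of $\PG(3,2)$, each with induced multiplicity at most $5-w_2$, whence $|\cM|-w_2\le 15(5-w_2)$ and $|\cM|\le 75-14w_2\le 19$. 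The matching construction is any $[19,7,8]_2$ code, e.g.\ the one stored in \texttt{Magma}'s BKLC database; since $g_2(7,8)=18$, the excess is $t=1$ and Theorem~\ref{thm:genweights} yields $w_3=1+\sum_{i=3}^{6}\lceil 8/2^i\rceil=5$, exactly as needed.

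The main obstacle I anticipate is that the standard Griesmer and coding upper bounds are genuinely too weak here. In particular, a hypothetical $[18,7,8]_2$ Griesmer code would have $w_3=4$ by Theorem~\ref{thm:grriesmer1}, which would give $m_2^{(3)}(6,4)\ge 18$ and contradict the first value; excluding this code is precisely the content of the multiplicity-one projection argument, so the two bounds in the lemma are mutually reinforcing. The existence of the matching $[19,7,8]_2$ code, which sits only one unit above the Griesmer bound, is taken for granted in line with the rest of the paper, but a fully self-contained verification would require a computer search or an appeal to \texttt{Magma}'s database.
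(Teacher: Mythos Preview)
Your argument is correct and is genuinely different from the paper's proof, which simply supplies explicit generator matrices for the lower bounds and obtains both upper bounds via ILP computations after prescribing the seven unit vectors in $\F_2^7$.

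Your upper bounds are strictly more informative. For $m_2^{(3)}(6,4)\le 9$ you reduce to Lemma~\ref{lemma_n_2_6_3_3} by projecting from a multiplicity-one point, and the residual ``all multiplicities $\ge 2$'' case is trivially bounded since three support points already overload a solid. For $m_2^{(3)}(6,5)\le 19$ your split on $w_2$ is clean: the case $w_2\le 3$ invokes Lemma~\ref{lemma_n_2_7_4_3}, and for $w_2\ge 4$ the projection from a plane onto the complementary $\PG(3,2)$ gives the exact bound $75-14w_2\le 19$. Both arguments avoid any computer search, whereas the paper leans entirely on ILP. Your explicit $9$-point construction via the $[9,2,6]_2$ dependency code is also tidier than the paper's matrix. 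The only minor caveat is your use of Theorem~\ref{thm:genweights}: as stated in the paper it asserts equality, but in general it gives only $w_j\le t+\sum_{i=k-1-j}^{k-1}\lceil d/q^i\rceil$; fortunately the inequality is all you need, and for the actual $[19,7,8]_2$ code equality is forced anyway since $w_3\le 4$ would contradict the first half of the lemma.
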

\begin{proof}
  Examples showing $m_2^{(3)}(6,4)\ge9$ and $m_2^{(3)}(6,5)\ge19$
  are given by 
  $$
    \begin{pmatrix}
    0&0&0&0&0&1&1&1&1\\ 
    0&0&0&1&1&0&0&1&1\\ 
    0&0&0&0&0&0&1&0&1\\ 
    0&0&0&0&1&1&1&0&1\\ 
    1&1&1&0&0&0&1&1&0\\ 
    0&1&1&0&0&0&0&0&0\\ 
    1&1&0&0&0&0&0&0&0\\
    \end{pmatrix}
  $$
  and
  $$
    \begin{pmatrix}
    1000000000111101101\\
    0100000101010111001\\
    0010000111100010011\\
    0001000110111000110\\
    0000100011011100011\\
    0000010100100111110\\
    0000001010010011111\\
    \end{pmatrix},
  $$
  respectively. For upper bounds are obtained by ILP computations prescribing the seven points generated by the unit vectors in $\F_2^7$.
\end{proof}

\begin{lemma}
  \label{lemma_n_2_7_3_s_6}
  We have $m_2^{(3)}(6,6)=28$.
\end{lemma}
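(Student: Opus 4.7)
The plan is to establish $m_2^{(3)}(6,6) = 28$ by exhibiting an explicit $28$-point multiset for the lower bound, and by a downward case distinction on the maximum hyperplane multiplicity, combined with \texttt{LinCode} enumeration and ILP computations, for the upper bound. This follows the template of Lemmas~\ref{lemma_n_2_7_s_9_and_10}, \ref{lemma_n_2_7_s_20} and \ref{lemma_n_2_7_s_22_and_23}. For the lower bound I would produce a generator matrix of a $[28,7]_2$ code whose associated multiset in $\PG(6,2)$ has $w_3 = 6$, either from the BKLC database of \texttt{Magma} or by a direct ILP search, and verify the weight hierarchy mechanically.

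For the upper bound, let $\cM$ be a multiset in $\PG(6,2)$ with $n := \#\cM \ge 29$ and every solid of multiplicity at most $6$. Projecting from a point $P$ of multiplicity $t$ onto a complementary hyperplane yields a multiset of $n-t$ points in $\PG(5,2)$ whose planes have multiplicity at most $6-t$, so $n - t \le m_2^{(2)}(5, 6-t)$. The values $m_2^{(2)}(5,3)=8$, $m_2^{(2)}(5,4)=32$ and $m_2^{(2)}(5,5)=33$ from Table~\ref{table_n_2_6_3_s}, combined with $n \ge 29$, then force $\cM(P) \le 2$ for every point $P$. Restricting to a hyperplane preserves the solid bound, so $\cM(H) \le m_2^{(3)}(5,6) = 18$ by Table~\ref{table_n_2_6_2_s}. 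Finally, the possibility $w_3 \le 5$ is eliminated by Lemma~\ref{lemma_n_2_7_3_s_4_and_5}, which yields $n \le m_2^{(3)}(6,5) = 19 < 29$; hence one may assume that some solid attains multiplicity exactly $6$.

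Set $\mu := \max_H \cM(H) \in \{18,17,\ldots\}$. For each value of $\mu$ I would enumerate with \texttt{LinCode} the projectively inequivalent $\mu$-point multisets in $\PG(5,2)$ with maximum solid multiplicity at most $6$, prescribe each such configuration as a hyperplane $H^\star$ together with a further point outside $H^\star$ making the span $7$-dimensional, and solve an ILP maximising the total number of points subject to the solid bound, the global hyperplane bound $\cM(H) \le \mu$ and the point-multiplicity bound $\cM(P) \le 2$. In every case the ILP is expected to return a value of at most $28$, after which one passes to the next smaller $\mu$. The main obstacle is anticipated to be the case $\mu = 18$: both the enumeration of the extremal $18$-point configurations and the resulting ILPs may be large, so it will likely be necessary to prescribe additional substructure (for example a solid of multiplicity exactly $6$, known to exist by the argument above) to keep the models tractable, exactly as done in the cited lemmas. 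Once every case yields at most $28$ points, the upper bound $m_2^{(3)}(6,6) \le 28$ follows, completing the proof.
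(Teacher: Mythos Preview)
Your plan is correct and follows the paper's approach closely: an explicit construction for the lower bound, and for the upper bound a descending case analysis on the maximum hyperplane multiplicity $\mu$, each case handled by \texttt{LinCode} enumeration of the admissible hyperplane configurations followed by an ILP with the configuration prescribed. Two tactical differences are worth noting. First, the paper treats only $\mu=18$ (the two $[18,6,8]_2$ codes, ILP gives $\le 25$) and $\mu=17$ (the three $[17,6,7]_2$ codes, ILP gives $\le 28$) by hyperplane prescription; for $\mu\le 16$ it switches to prescribing a maximal codimension-two subspace, namely a $4$-space carrying $10$ points (the four $[10,5,4]_2$ codes, since $m_2^{(3)}(4,6)=10$), together with two further points to reach full span. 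Your open-ended hyperplane descent works too, but you should make the termination explicit: by double counting, $63n=\sum_H\cM(H)\le 127\mu$, so $\mu\le 14$ already forces $n\le 28$, leaving only $\mu\in\{15,16,17,18\}$ to treat. Second, your expectation that $\mu=18$ is the bottleneck is inverted: there are only two extremal configurations and the ILP is very tight there; the proliferation of cases happens as $\mu$ decreases, which is precisely why the paper changes to the $4$-space trick rather than enumerating all $16$- and $15$-point hyperplane restrictions.
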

\begin{proof}
  An example showing $m_2^{(3)}(6,6)\ge 28$ is given by $$
    \left(\begin{smallmatrix}
    0 0 0 0 0 0 0 0 0 0 0 0 1 1 1 1 1 1 0 0 0 0 1 1 1 1 1 1\\ 
    0 0 0 0 0 0 0 0 0 1 1 1 0 0 0 1 1 1 0 1 1 1 0 0 0 1 1 1\\ 
    0 0 0 1 1 1 1 1 1 0 0 0 0 0 0 1 1 1 0 0 0 1 0 0 1 0 1 1\\ 
    1 1 1 0 0 0 1 1 1 0 0 0 0 0 0 0 0 0 0 0 1 1 0 1 1 1 0 1\\ 
    0 0 0 1 1 1 0 0 0 0 0 0 1 1 1 1 1 1 1 1 1 1 0 0 0 0 0 0\\ 
    0 1 1 0 1 1 0 1 1 0 1 1 0 1 1 0 1 1 0 0 0 0 0 0 0 0 0 0\\ 
    1 1 0 1 1 0 1 1 0 1 1 0 1 1 0 1 1 0 0 0 0 0 0 0 0 0 0 0
\end{smallmatrix}\right).
  $$
  Let $\cM$ be a multiset of $n$ points in $\PG(6,2)$ such that every solid contains at most six points. We have used \texttt{LinCode} \cite{bouyukliev2021computer} to enumerate the two non-isomorphic $[18,6,8]_2$ codes. 
  Prescribing the two possible configurations and an arbitrary point making the span $7$-dimensional, we have used an ILP computation to conclude $\#\cM\le 25$. In the following we assume $\cM(H)\le 17$ for each hyperplane. 
  We have used \texttt{LinCode} \cite{bouyukliev2021computer} to enumerate the three non-isomorphic $[17,6,7]_2$ codes.
  Prescribing the three possible configurations and an arbitrary point making the span $6$-dimensional, we have used an ILP computation to conclude $\#\cM\le 28$. In the following we assume $\cM(H)\le 16$ for each hyperplane. 
  We have used \texttt{LinCode} \cite{bouyukliev2021computer} to enumerate the four non-isomorphic $[10,5,4]_2$ codes. 
  Prescribing the four possible configurations and two points that make the span $6$-dimensional, we have used ILP computations to conclude $\#\cM\le 28$.
\end{proof}

\begin{lemma}
  \label{lemma_n_2_7_3_s_11}
  We have $m_2^{(3)}(6,11)=72$.
\end{lemma}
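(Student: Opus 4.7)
The plan is to establish matching bounds in the style of Lemmas~\ref{lemma_n_2_6_2_11}, \ref{lemma_n_2_7_s_20}, and~\ref{lemma_n_2_7_s_22_and_23}.

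For the lower bound $m_2^{(3)}(6,11)\ge 72$, I would exhibit an explicit generator matrix of a $[72,7]_2$ code whose associated multiset $\cK$ in $\PG(6,2)$ has $w_3=11$ (equivalently $d_3=61$). The natural sources are: a targeted ILP search seeded with prescribed unit-vector columns; the stored $[72,7,34]_2$ codes from Magma's BKLC database, checked case-by-case for $d_3=61$; or an assembly via Lemma~\ref{lemma_sum} of smaller arcs whose maximum solid-multiplicities sum to~$11$, using the optimal arcs attaining the values of $m_2^{(3)}(6,w)$ for $w<11$ already established above. That length~$72$ is commensurate with the Griesmer bound is confirmed by the calculation $g_2^{(3)}(7,61)=61+\lceil 61/14\rceil+\lceil 61/28\rceil+\lceil 61/56\rceil+\lceil 61/112\rceil=72$.

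For the upper bound $m_2^{(3)}(6,11)\le 72$, let $\cM$ be a multiset in $\PG(6,2)$ of size $n\ge 73$ with $\cM(S)\le 11$ on every solid $S$, and seek a contradiction. Two structural inputs apply. First, a point $P$ of multiplicity $t$ induces, via projection onto a hyperplane, a multiset in $\PG(5,2)$ of size $n-t$ whose planes contain at most $11-t$ points each; hence $n\le t+m_2^{(2)}(5,11-t)$, and inserting the values from Table~\ref{table_n_2_6_3_s} already rules out $t\ge 2$, forcing $\cM$ to be a set. Second, by Lemma~\ref{lemma_n_2_6_2_11} every hyperplane satisfies $\cM(H)\le m_2^{(3)}(5,11)=38$. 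I would then descend on the largest hyperplane multiplicity. In the top case $\cM(H^\star)=38$, I would enumerate the (essentially unique) $[38,6,18]_2$ configuration via \texttt{LinCode}~\cite{bouyukliev2021computer}, prescribe it as $H^\star$ together with an arbitrary additional point completing the span to $7$ dimensions, and use an ILP to conclude $n\le 72$. For the remaining cases $\cM(H)\le 37,36,\ldots$, enumerate the corresponding $[m,6]_2$ configurations with $w_3\le 11$ via \texttt{LinCode}, prescribe each as a hyperplane, and close with an ILP, iteratively strengthening the structural constraints (e.g.\ bounds on solid or hyperline multiplicities obtained from the tabulated $m_2^{(r)}$ values).

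The chief obstacle is computational: the descent on the hyperplane multiplicity multiplies both the number of non-isomorphic codes that \texttt{LinCode} must enumerate and the size of each resulting ILP. Keeping the case analysis tractable depends on systematically combining all available structural ceilings — point-, solid- and hyperline-multiplicity bounds derived from previously tabulated values of $m_2^{(r)}(k-1,\cdot)$ — at each level of the descent, so that only a small, manageable list of hyperplane configurations actually has to be fed into the final ILP.
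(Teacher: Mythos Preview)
Your plan would work, but the upper bound you sketch is far more laborious than necessary and misses the one-line argument the paper actually uses. You do observe the key structural fact---that every hyperplane $H$ satisfies $\cM(H)\le m_2^{(3)}(5,11)=38$ by Lemma~\ref{lemma_n_2_6_2_11}---but you then embark on a descent-and-ILP enumeration of hyperplane configurations. The paper instead stops right there: once every hyperplane has multiplicity at most~$38$, the multiset $\cM$ is by definition an $(n,\le 38)^{(5)}$-arc in $\PG(6,2)$, so $n\le m_2^{(5)}(6,38)$. The latter is a pure optimal-linear-code question (largest $n$ with an $[n,7,n-38]_2$ code), and since no $[73,7,35]_2$ code exists while $[72,7,34]_2$ codes do, $m_2^{(5)}(6,38)=72$. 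This is precisely the \emph{coding upper bound}, applied in a single step, and it replaces all of your \texttt{LinCode} enumerations and ILP runs. Note also that your Griesmer computation $g_2^{(3)}(7,61)=72$ is correct but does not say the Griesmer upper bound for $m_2^{(3)}(6,11)$ is~$72$; in fact that bound is~$81$, which is why the sharper coding argument is needed.

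For the lower bound the paper exhibits an explicit $7\times 72$ generator matrix; be aware that the BKLC entry alone does not suffice here (the paper explicitly lists $w=11$ as \emph{not} covered by BKLC for $m_2^{(3)}(6,\cdot)$), so among your listed options the direct ILP search is the one that actually produces the example.
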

\begin{proof}
  An example showing $m_2^{(3)}(6,11)\ge 72$ is given by $$
    \left(\begin{smallmatrix}
    100000011011100011101001110000110011101011010001010111001001011111111100\\
010000010110010010011101001000101010011110111001111100101101110010000010\\
001000001111011001101110001100001101111010010100011101010010111001000010\\
000100000010001101000111101110010110111101001010100111001101111000100010\\
000010000000110111110111000111000011001111001101011010100010111100010010\\
000001000001001010001111110011101001110110100110001110110101011100001010\\
000000111110100111011010010001010111110100100010111010010011110000000110\\
\end{smallmatrix}\right).
  $$
  Let $\cM$ be a multiset of $n$ points in $\PG(6,2)$ such that every solid contains at most eleven points. We have $\cM(H)\le m_2^{(3)}(5,11)=38$ for every hyperplane $H$, so that $\#\cM\le m_2^{(5)}(6,38)\le 72$.
\end{proof}

\begin{proposition}
  If $w\ge 12$ or $w\in\{8,9,10\}$, then $m_2^{(3)}(6,w)$ is given by the Griesmer upper bound. Moreover, we have $m_2^{(4)}(6,4)=9$, $m_2^{(4)}(6,5)=19$, $m_2^{(4)}(6,6)=28$, 
 $m_2^{(4)}(6,7)=35$, and  $m_2^{(4)}(6,11)=72$.
\end{proposition}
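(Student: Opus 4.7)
The plan is to split the cases along the same fault lines that the preceding lemmas have already mapped out. The four exceptional small values $w\in\{4,5,6,11\}$ are completely settled by Lemmas~\ref{lemma_n_2_7_3_s_4_and_5}, \ref{lemma_n_2_7_3_s_6}, and \ref{lemma_n_2_7_3_s_11}, which already supply matching lower and upper bounds, so these cases reduce to citation. The remaining exceptional value $w=7$ is treated separately below.

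For the main range $w\ge 12$ and for $w\in\{8,9,10\}$, the lower bound comes from Solomon--Stiffler multisets in the sense of Theorem~\ref{thm:minihyp-griesmer}: one starts from arcs of type $t[6]$, $t[6]-[j]$, $t[6]-[j_1]-[j_2]$, etc., choosing the removed subspaces in general position (or in chains where possible) so that the removed canonical minihyper has the right parameters, then adds between $0$ and $3$ further points as needed, and extends to larger $w$ by Lemma~\ref{lemma_sum} with period $v_7=127$, mirroring the construction tables for $m_2^{(3)}(5,w)$ and $m_2^{(4)}(6,w)$. The upper bound in this range is then the Griesmer upper bound $g_2^{(3)}(7,n-w)$, verified by direct arithmetic case by case exactly as in the preceding propositions.

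The value $w=7$ is the delicate case. The lower bound $m_2^{(3)}(6,7)\ge 35$ would be given by an explicit $[35,7,28]^{(3)}_2$ generator matrix found by an ILP search or taken from the BKLC database, following the style of Lemmas~\ref{lemma_n_2_7_3_s_6} and \ref{lemma_n_2_7_3_s_11}. The matching upper bound is the main obstacle, since neither the Griesmer estimate nor the recursive coding upper bound is tight here. The natural attack is to combine the hyperplane restriction $\cK(H)\le m_2^{(3)}(5,7)=19$ with an enumeration (via \texttt{LinCode}) of all arcs of cardinality close to $19$ in $\PG(5,2)$ that have no solid of multiplicity exceeding $7$, prescribe each such configuration as a hyperplane section of the hypothetical arc $\cK$ in $\PG(6,2)$, adjoin one additional point to fix the span, and run an integer linear program to show $\#\cK\le 35$. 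The hardest part will be controlling the number of hyperplane configurations to enumerate and tightening the ILP with auxiliary constraints coming from $m_2^{(3)}(k',w')$ with $k'\le 5$ (for instance bounds on the multiplicity of a point or a plane) so that each resulting instance is actually solvable in reasonable time.
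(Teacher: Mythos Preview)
Your treatment of $w\in\{4,5,6,11\}$ by citing the lemmas, and of the main range $w\ge 12$ and $w\in\{8,9,10\}$ via Solomon--Stiffler constructions plus the Griesmer upper bound, is correct and matches the paper.

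The genuine error is in the $w=7$ case. You assert that ``neither the Griesmer estimate nor the recursive coding upper bound is tight here'' and then outline an enumeration-plus-ILP attack. In fact the coding upper bound \emph{is} tight, and this is exactly what the paper uses. From the table for $m_2^{(3)}(5,w)$ we have $m_2^{(3)}(5,7)=19$; hence every hyperplane of $\PG(6,2)$ carries at most $19$ points of the arc, so
\[
  m_2^{(3)}(6,7)\ \le\ m_2^{(5)}\bigl(6,\,m_2^{(3)}(5,7)\bigr)\ =\ m_2^{(5)}(6,19).
\]
The latter equals $35$: a $[35,7,16]_2$ code exists, while $g_2(7,17)=17+9+5+3+2+1+1=38>36$ rules out a $[36,7,17]_2$ code. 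Thus the coding upper bound already gives $m_2^{(3)}(6,7)\le 35$, matching the explicit construction, and no hyperplane enumeration or ILP is needed. (If you computed the coding bound step by step through $m_2^{(3)}(4,7)=13$, $m_2^{(4)}(5,13)=25$, $m_2^{(5)}(6,25)=49$, that is the weak iterated form; the bound is meant to use the best available value $m_2^{(3)}(5,7)$ directly, which is strictly sharper.)
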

\begin{proof}
  For $m_2^{(3)}(6,4)=9$ and $m_2^{(3)}(6,5)=19$ we refer to Lemma~\ref{lemma_n_2_7_3_s_4_and_5}. For $m_2^{(3)}(6,6)=28$ we refer to Lemma~\ref{lemma_n_2_7_3_s_6}. For $m_2^{(3)}(6,11)=72$ we refer to Lemma~\ref{lemma_n_2_7_3_s_11}. 
  We consider the following constructions
  \begin{itemize}
     \item types $t[6]$, $t[6]-[5]$, $t[6]-[4]$, and $t[6]-[3]$
           for $t\ge 1$;
     \item types $t[6]-[5]-[4]$, $t[6]-[5]-[3]$, $t[6]-[5]$,
          $t[6]-[4]-[3]$, and  $t[6]-[4]$ for $t\ge 2$,
  \end{itemize}
  as well as adding up to four additional points to those constructions. Examples showing $m_2^{(3)}(6,7)\ge 35$ 
  given by
  $$
    \left(\begin{smallmatrix}
    1 0 0 1 0 1 0 1 1 0 1 0 1 0 1 1 0 1 0 1 1 1 0 1 0 1 1 0 0 0 0 1 1 1 1\\
    0 1 0 1 0 0 0 1 0 1 1 0 1 1 0 0 0 1 1 1 0 0 0 0 1 0 1 0 1 1 1 0 1 0 0\\
    0 0 1 1 0 1 0 0 0 0 1 0 0 1 0 0 0 0 1 1 0 0 0 1 1 1 0 1 0 0 1 1 1 1 1\\
    0 0 0 0 1 1 0 0 1 1 1 0 0 0 1 0 0 0 0 1 0 0 1 0 1 1 1 0 1 1 0 1 1 0 1\\
    0 0 0 0 0 0 1 1 1 1 1 0 0 0 0 1 0 0 0 0 1 0 0 1 1 1 1 1 0 1 1 0 1 1 0\\
    0 0 0 0 0 0 0 0 0 0 0 1 1 1 1 1 0 0 0 0 0 1 1 1 1 1 1 1 1 0 0 0 1 1 1\\
    0 0 0 0 0 0 0 0 0 0 0 0 0 0 0 0 1 1 1 1 1 1 1 1 1 1 1 1 1 1 1 1 0 0 0   
    \end{smallmatrix}\right).
  $$
  For $m_2^{(3)}(6,19)\ge 145$ we can use a $[145,7,72]_2$ Griesmer code \cite{van1981smallest}.
  The upper bound $m_2^{(3)}(6,7)\le 35$ is given by the coding upper bound. All other upper bounds are obtained from the Griesmer upper bound.
\end{proof}  

The stored generator matrices of $[m_2^{(3)}(6,w),7]_2$ codes in the database of \emph{best known linear codes} (BKLC) in \texttt{Magma} give optimal examples for $w\in\{5,7,19\}$.

\begin{table}[htp]
  \begin{center}
    \begin{tabular}{rrcc}
      \hline 
      $w$ & $m_2^{(3)}(6,w)$ & construction & upper bound \\ 
      \hline
       4 & 9 & ILP & Lemma~\ref{lemma_n_2_7_3_s_4_and_5} \\
       5 & 19 & BKLC/ILP & Lemma~\ref{lemma_n_2_7_3_s_4_and_5} \\
       6 & 28 & ILP & Lemma~\ref{lemma_n_2_7_3_s_6} \\
       7 & 35 & BKLC/ILP & Coding upper bound \\
       8 & 64 & $[6]-[5]$ & Griesmer upper bound \\
       9 & 65 & plus point & Griesmer upper bound \\
      10 & 66 & plus point & Griesmer upper bound \\
      11 & 72 & Lemma~\ref{lemma_n_2_7_3_s_11} & Lemma~\ref{lemma_n_2_7_3_s_11} \\
      12 & 96 & $[6]-[4]$ & Griesmer upper bound \\
      13 & 97 & plus point & Griesmer upper bound \\
      14 & 112 & $[6]-[3]$ & Griesmer upper bound \\
      15 & 127 & $[6]$ & Griesmer upper bound \\
      16 & 128 & plus point & Griesmer upper bound \\
      17 & 129 & plus point & Griesmer upper bound \\
      18 & 130 & plus point & Griesmer upper bound \\
      19 & 145 & BKLC/ILP & Griesmer upper bound \\
      20 & 160 & $2[6]-[5]-[4]$ & Griesmer upper bound \\
      21 & 161 & plus point & Griesmer upper bound \\
      22 & 176 & $2[6]-[5]-[3]$ & Griesmer upper bound \\
      23 & 191 & sum construction & Griesmer upper bound \\
      24 & 192  & sum construction & Griesmer upper bound \\
      25 & 193 & sum construction & Griesmer upper bound \\
      26 & 208 & $2[6]-[4]-[3]$ & Griesmer upper bound \\
      \hline
    \end{tabular}  
    \caption{Exact values for $m_2^{(3)}(6,w)$.}
    \label{table_n_2_7_3_s}
  \end{center}
\end{table}

\subsection{Exact values for $m_3^{(r)}(k-1,w)$}
\label{subsec_ternary}

\begin{proposition}
  If $w\ge 3$ , then $m_3^{(1)}(3,w)$ is given by the Griesmer upper bound. Moreover, we have $m_3^{(1)}(3,2)=10$.
\end{proposition}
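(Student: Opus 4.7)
The plan is to split the statement into the $w \ge 3$ half and the $w = 2$ half, since they require different techniques: for $w \ge 3$ the upper bound is immediate from Theorem~\ref{thm_griesmer_gen_ham}, so only constructions of the right cardinality are needed, whereas for $w = 2$ the Griesmer bound is not tight (it only gives $14$) and one must invoke a classical cap result.

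For $w \ge 3$, I would first produce optimal multisets in four base cases, one for each residue of $w$ modulo $4$. The complement of a hyperplane in $\PG(3,3)$ (an affine solid) gives $27$ points with every line meeting it in $0$ or $3$ points, so $m_3^{(1)}(3,3) \ge 27$; the full $\PG(3,3)$ gives $m_3^{(1)}(3,4) \ge 40$; adding one doubled point gives $m_3^{(1)}(3,5) \ge 41$; and two disjoint affine solids, obtained by applying Lemma~\ref{lemma_sum} to the $w = 3$ construction, give $m_3^{(1)}(3,6) \ge 54$. A direct calculation with $q = 3$, $v_2 = 4$, $k - r = 2$ in Theorem~\ref{thm_griesmer_gen_ham} confirms that $27, 40, 41, 54$ are precisely the Griesmer bounds at those four values of $w$. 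For larger $w$, I would iterate Lemma~\ref{lemma_sum} with $m_3^{(1)}(3,4) = 40$: each application adds one copy of $\PG(3,3)$, boosting $w$ by $4$ and $n$ by $40$. A routine ceiling-function check shows that the Griesmer bound also grows by exactly $40$ when $w$ grows by $4$, so optimality propagates from the four base cases to every $w \ge 3$.

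For $w = 2$, the lower bound is realised by an ovoid in $\PG(3,3)$ (e.g., the elliptic quadric with $q^2 + 1 = 10$ points, each line meeting it in $0$, $1$ or $2$ points), and the matching upper bound $m_3^{(1)}(3,2) \le 10$ is the classical theorem of Bose and Qvist that a cap in $\PG(3,q)$ with $q > 2$ has at most $q^2 + 1$ points, which I would cite rather than reprove. The only real obstacle is bookkeeping: one must verify the mod-$4$ jump pattern of the Griesmer bound cleanly enough that the iterated sum construction covers every $w \ge 3$. The $w = 2$ case, although outside the Griesmer framework, is entirely classical once one recognises the problem as the cap problem in $\PG(3,3)$.
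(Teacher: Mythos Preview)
Your proposal is correct and follows essentially the same strategy as the paper: Griesmer upper bound from Theorem~\ref{thm_griesmer_gen_ham} for $w\ge 3$, explicit constructions for the lower bounds, and a classical cap result for $w=2$. The packaging differs slightly. For $w\ge 3$ you spell out Solomon--Stiffler-type configurations for four base cases (mirroring what the paper does in the binary Propositions~\ref{prop_n_2_4_2_s}--\ref{prop_n_2_5_1_2}) and then iterate Lemma~\ref{lemma_sum}; the paper instead takes the shortcut of citing the existence of a $[27,4,18]_3$ Griesmer code together with the fact that ternary Griesmer $[n,4,d]_3$ codes exist for all $d\ge 16$, and implicitly invokes Theorem~\ref{thm:grriesmer1} to conclude these codes also meet the Griesmer bound for $d_2$. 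Your version is more self-contained; the paper's is terser but relies on an external existence statement. For $w=2$ you cite Bose--Qvist directly, whereas the paper uses the coding upper bound; these are equivalent, since the coding upper bound here amounts to projecting from a cap point to an oval in a plane, which is precisely the Bose--Qvist argument.

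Two very minor points. First, ``two disjoint affine solids'' is a misnomer: complements of two planes in $\PG(3,3)$ always overlap, but since you explicitly invoke Lemma~\ref{lemma_sum} the multiset sum is what is meant, and that is correct. Second, Bose--Qvist is a statement about caps (sets), while $m_3^{(1)}(3,2)$ is defined for multisets; you should add the one-line observation that a point of multiplicity $\ge 2$ forces every other point to have multiplicity $0$, so the optimum is attained by a set.
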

\begin{proof}
  The upper bound $m_3^{(1)}(3,2)\le 10$ follows from the coding upper bound and all other upper bounds follow from the Griesmer upper bound. The existence of an ovoid in $\PG(3,3)$ yields $m_3^{(1)}(3,2)\ge 10$. A $[27,4,18]_3$ Griesmer code yields $m_3^{(1)}(3,3)\ge 27$. Note that Griesmer $[n,4,d]_3$ codes exist for all $d\ge 16$. 
\end{proof}

We remark that we have $m_q^{(1)}(3,2)=q^2+1$ for all $q>2$ \cite{bose1947mathematical,qvist1952some}. Moreover, we have 
$m_4^{(1)}(3,3)=31$ and $m_5^{(1)}(3,3)=44$ \cite{edel2010multiple}. 

\begin{lemma}
  \label{lemma_3_5_2_4}
  We have $m_3^{(2)}(4,4)=20$.
\end{lemma}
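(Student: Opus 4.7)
My plan is to prove matching lower and upper bounds. The lower bound $m_3^{(2)}(4,4)\ge 20$ follows from any $20$-cap $\mathcal{C}$ in $\PG(4,3)$ (which exists classically): since the largest cap in the projective plane $\PG(2,3)$ is a conic of size $q+1=4$, the set $\mathcal{C}$ meets every plane in at most $4$ points, so its indicator multiset is admissible. For the upper bound I will suppose that $\cM$ is a multiset in $\PG(4,3)$ of total multiplicity $n\ge 21$ with every plane of multiplicity at most $4$, and derive a contradiction in two steps.

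\emph{Step 1: $\cM$ is a set.} If some point $P$ has $\cM(P)=t\ge 2$, the projection $\varphi$ from $P$ onto a disjoint hyperplane (as recalled in Section~\ref{sec_preliminaries}) sends planes through $P$ to lines and yields a multiset $\cM^{\varphi}$ in $\PG(3,3)$ of cardinality $n-t$ with every line of multiplicity at most $4-t$. For $t=2$ this makes $\cM^{\varphi}$ a cap in $\PG(3,3)$, bounded by the ovoid value $m_3^{(1)}(3,2)=q^2+1=10$, so $n\le 12$. For $t\ge 3$ the line bound drops to at most $1$; since any two points of $\PG(3,3)$ lie on a common line, $\cM^{\varphi}$ is supported on at most one point, giving $n\le t+1\le 5$. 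Each sub-case contradicts $n\ge 21$, so every point of $\cM$ has multiplicity at most $1$.

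\emph{Step 2: the set $\cM$ is a cap.} Fix a line $\ell$ and write $m=\cM(\ell)$. In $\PG(4,3)$ there are $v_3=13$ planes through $\ell$, and they partition $\PG(4,3)\setminus\ell$ into $13$ blocks of $9$ points each; moreover every such plane already contains $m$ points of $\cM$ on $\ell$ and therefore at most $4-m$ points of $\cM$ off $\ell$. Summing yields $n\le m+13(4-m)=52-12m$, which is at most $16$ once $m\ge 3$, contradicting $n\ge 21$. Hence every line carries at most $2$ points of $\cM$, i.e.\ $\cM$ is a cap in $\PG(4,3)$. Invoking the classical bound that a cap in $\PG(4,3)$ has at most $20$ points (Pellegrino/Hill) then gives $n\le 20$, the desired contradiction. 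The only non-elementary input is this external $20$-cap bound; the projection-plus-pencil machinery is self-contained within Section~\ref{sec_preliminaries}, so the main conceptual step is simply recognising that the multiset problem collapses to the classical cap problem in $\PG(4,3)$.
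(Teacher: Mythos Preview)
Your proof is correct and takes a genuinely different route from the paper. The paper establishes the lower bound by exhibiting an explicit $5\times 20$ generator matrix over $\F_3$ and the upper bound by an ILP computation after prescribing the unique $[10,4,6]_3$ code on a solid. You instead reduce both directions to the classical cap problem in $\PG(4,3)$: the $20$-cap of Pellegrino gives the lower bound since plane sections of a cap in $\PG(4,3)$ are arcs in $\PG(2,3)$ and hence have at most $q+1=4$ points; for the upper bound you show, by projection through a putative multiple point (using $m_3^{(1)}(3,2)=10$) and by pencil counting through a putative trisecant, that any admissible multiset of size $\ge 21$ would have to be a cap, contradicting the Pellegrino--Hill bound $m_3^{(1)}(4,2)=20$. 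One small cosmetic point: in your Step~1 the case $t\ge 3$ actually gives $n-t\le 4-t$, i.e.\ $n\le 4$, slightly sharper than the $n\le t+1$ you wrote, but this is immaterial.

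What each approach buys: the paper's argument is self-contained modulo a short computer check and produces an explicit example; yours is computation-free and conceptually explains \emph{why} the value is $20$, namely that the multiset problem for $w=4$ collapses to the set-theoretic cap problem. The only external input you need, the bound $m_3^{(1)}(4,2)=20$, is in any case quoted and referenced later in the same subsection.
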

\begin{proof}
  An example showing $m_3^{(2)}(4,4)\ge 20$ is given by
  $$ 
    \begin{pmatrix}
    10000220001102111221\\
    01000002221001121111\\
    00100212202022200211\\
    00010202111222011010\\
    00001111021120010111  
    \end{pmatrix}.
  $$
  After prescribing the unique $[10,4;6]_3$ code a small ILP computation verifies 
  $m_3^{(2)}(4,4)\le 20$. 
\end{proof}

\begin{lemma}
  \label{lemma_3_5_2_6}
  We have $m_3^{(2)}(4,6)=38$.
\end{lemma}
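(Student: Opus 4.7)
The plan is to prove $m_3^{(2)}(4,6)=38$ in two steps, closely modelled on Lemma~\ref{lemma_3_5_2_4}.

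For the lower bound I would exhibit an explicit $5\times 38$ generator matrix over $\F_3$ whose column multiset $\cK$ in $\PG(4,3)$ satisfies $\cK(\pi)\le 6$ for every plane $\pi$; such a matrix can be produced by a short ILP search or by extending a suitable Solomon--Stiffler configuration, and the verification that every plane carries at most $6$ points is routine.

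For the upper bound let $\cK$ be an arbitrary $(n,6)^{(2)}$-arc in $\PG(4,3)$. Two structural bounds guide the argument. First, for every hyperplane $H\cong\PG(3,3)$ the restriction $\cK|_H$ is an $(\cK(H),6)^{(2)}$-arc, so $\cK(H)\le m_3^{(2)}(3,6)=15$, this bound being attained by the Griesmer $[15,4,9]_3$ code. Second, projecting from a point $P$ of multiplicity $t$ yields an $(n-t,6-t)^{(1)}$-arc in $\PG(3,3)$, and comparing with $m_3^{(1)}(3,3)=27$ forces $\cK(P)\le 2$. I would then branch on the maximum hyperplane multiplicity $w_{\max}$. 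If $w_{\max}=15$, enumerate the non-isomorphic $[15,4,9]_3$ codes with \texttt{LinCode}, prescribe each of them as a hyperplane $H^\star$ of $\PG(4,3)$ together with an arbitrary further point that makes the span $5$-dimensional, and close the case by an ILP. If $w_{\max}\le 12$, the elementary double counting $n\le\lfloor 121\,w_{\max}/40\rfloor\le 36$ already suffices; for the borderline cases $w_{\max}\in\{13,14\}$ I would enumerate the corresponding $(w_{\max},6)^{(2)}$-arcs in $\PG(3,3)$ (equivalently the $[w_{\max},4]_3$ codes with $d_1\ge w_{\max}-6$), prescribe any such arc as a hyperplane of maximum multiplicity, and again close the case by an ILP.

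The main obstacle is managing the ILP complexity, in particular for $w_{\max}=14$, where the enumeration of $(14,6)^{(2)}$-arcs in $\PG(3,3)$ is expected to produce a substantially larger list of non-isomorphic configurations than the Griesmer case; exploiting the automorphism group of each prescribed hyperplane configuration to identify equivalent extensions before handing them to the solver will be crucial for keeping the overall computation tractable.
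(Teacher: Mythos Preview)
Your approach is essentially the paper's: an explicit $5\times 38$ generator matrix for the lower bound, and for the upper bound prescribing each of the three inequivalent $[15,4,9]_3$ hyperplane configurations and closing by ILP. The paper's write-up does not branch on smaller $w_{\max}$ at all---it handles only the $w_{\max}=15$ case---so your extra case analysis for $w_{\max}\in\{13,14\}$ (and the attendant enumeration worry you flag) is additional care beyond what the paper actually writes down. One small slip: a $[15,4,9]_3$ code is not a Griesmer code, since $g_3(4,9)=14$; this does not affect your argument.
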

\begin{proof}
  An example showing $m_3^{(2)}(4,6)\ge 38$ is given by
  $$ 
    \left(\begin{smallmatrix}
    1 0 0 0 0 0 1 2 0 1 0 1 1 1 2 2 1 2 0 2 1 1 0 0 0 2 0 0 1 2 1 2 2 0 1 2 1 1\\
    0 1 0 0 0 2 1 2 1 2 2 1 2 2 0 1 0 2 2 2 2 1 2 2 1 0 1 0 1 0 1 1 0 1 0 1 1 1\\
    0 0 1 0 0 2 0 1 2 0 0 1 2 1 1 2 0 1 0 2 2 0 1 1 2 1 1 2 0 2 0 0 1 0 1 2 2 1\\
    0 0 0 1 0 1 2 0 1 2 0 1 0 1 2 2 1 2 2 1 2 0 1 0 2 2 0 2 0 1 2 2 1 1 0 0 0 2\\
    0 0 0 0 1 2 2 2 2 2 2 2 2 2 2 2 2 2 2 2 0 1 0 1 1 1 2 2 2 1 0 2 2 2 1 2 0 2]\end{smallmatrix}\right).
  $$
  After prescribing the three non-equivalent  $[15,4;9]_3$ codes  small ILP computations verify $m_3^{(2)}(4,6)\le 38$. 
\end{proof}

\begin{lemma}
  \label{lemma_3_5_2_11}
  We have $m_3^{(2)}(4,11)=91$.
\end{lemma}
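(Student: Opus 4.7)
The plan is to match the lower and upper bounds of $91$. For the lower bound I would exhibit a classical Griesmer $[91,5,60]_3$ code (its existence is standard and can be read off, e.g., from the BKLC database in \texttt{Magma} or Grassl's tables \cite{grassl-tables}; the associated decomposition is $60=81-2\cdot 9-3$). By Theorem~\ref{thm:grriesmer1} such a code automatically attains the generalized Griesmer bound for the second weight with $d_2=80$, so the associated $91$-point multiset in $\PG(4,3)$ has maximum plane multiplicity equal to $n-d_2=11$, giving $m_3^{(2)}(4,11)\ge 91$.

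For the upper bound, let $\cM$ be any multiset in $\PG(4,3)$ with every plane of multiplicity at most $11$, and let $n=\#\cM$. For every hyperplane $H\cong\PG(3,3)$ the restriction $\cM|_H$ is a multiset in $\PG(3,3)$ in which every plane of $H$ (that is, every $2$-dimensional subspace) has multiplicity at most $11$, so $\cM(H)\le m_3^{(2)}(3,11)$. A direct classical Griesmer computation on $[n',4,n'-11]_3$ codes yields $m_3^{(2)}(3,11)=32$, attained (essentially uniquely) by the Solomon--Stiffler construction that deletes two skew lines from $\PG(3,3)$. The argument now splits into two cases, mirroring the approach of Lemma~\ref{lemma_n_2_7_s_20}. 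First, if $\cM(H)\le 31$ for every hyperplane, the linear code associated with $\cM$ is an $[n,5,n-31]_3$-code, and the classical Griesmer bound directly gives $n\le m_3^{(3)}(4,31)=91$, since $g_3(5,61)=93$ already rules out $n=92$. Second, if some hyperplane $H^\star$ achieves $\cM(H^\star)=32$, I would use \texttt{LinCode} \cite{bouyukliev2021computer} to enumerate the inequivalent $[32,4,21]_3$ codes, prescribe each candidate for $\cM|_{H^\star}$ together with one further point making the span of $\cM$ the whole $\PG(4,3)$, and verify $\#\cM\le 91$ by an ILP computation.

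The main obstacle I expect is the ILP step in the second case. The linear constraints $\sum_{P\in\pi\setminus H^\star}\cM(P)\le 11-\cM(\pi\cap H^\star)$ must be imposed for every plane $\pi\not\subset H^\star$ over the $81$ points outside $H^\star$, and the right-hand side depends on whether the line $\pi\cap H^\star$ is disjoint from, meets, or coincides with one of the two removed skew lines (yielding $\cM(\ell)\in\{4,3,2,0\}$ respectively). Provided the inequivalent $[32,4,21]_3$ codes are few and each of the corresponding ILPs is solved to optimality, the bound $\#\cM\le 91$ follows, closing the dichotomy and matching the lower bound.
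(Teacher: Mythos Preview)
Your proof is correct and follows essentially the same route as the paper: the lower bound via the BKLC $[91,5,60]_3$ code, and the upper bound via an ILP computation after prescribing the (unique) $[32,4,21]_3$ configuration in a maximal hyperplane. Your explicit Griesmer-based treatment of the case where no hyperplane reaches multiplicity $32$ (via $g_3(5,61)=93>92$) is a welcome addition that the paper's terse proof leaves implicit.
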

\begin{proof}
  An example showing $m_3^{(2)}(4,6)\ge 38$ is given by the $[91,5,60]_3$ code in the database of \emph{best known linear codes} (BKLC) in \texttt{Magma}.
  After prescribing the unique non-equivalent  $[32,4;21]_3$ code an ILP computation verifies $m_3^{(2)}(4,11)\le 91$. 
\end{proof}

\begin{proposition}
  If $w\ge 18$ or $s\in\{7,9,10,12,13,14,15,16\}$, then $m_3^{(2)}(4,w)$ is given by the Griesmer upper bound. Moreover, we have $m_3^{(2)}(4,3)=11$, $m_3^{(2)}(4,4)=20$, $m_3^{(2)}(4,5)=29$, $m_3^{(2)}(4,6)=38$, $m_3^{(2)}(4,8)=56$, $m_3^{(2)}(4,11)=91$, and 
 $m_3^{(2)}(4,17)\in\{143,\dots,146\}$.
\end{proposition}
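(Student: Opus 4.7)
The proof follows the template of the preceding binary propositions in Section~\ref{subsec_binary}. The equalities $m_3^{(2)}(4,4)=20$, $m_3^{(2)}(4,6)=38$, and $m_3^{(2)}(4,11)=91$ are exactly the statements of Lemma~\ref{lemma_3_5_2_4}, Lemma~\ref{lemma_3_5_2_6}, and Lemma~\ref{lemma_3_5_2_11}, respectively. The remaining work splits cleanly into supplying lower-bound constructions and supplying matching upper bounds.

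For the Griesmer-attaining cases ($w\ge 18$ or $w\in\{7,9,10,12,13,14,15,16\}$), I would produce lower bounds via Solomon--Stiffler type arcs of type $t[4]-\sum_i\varepsilon_i[i-1]$ in $\PG(4,3)$, whose existence and Griesmer-tightness is guaranteed by Theorem~\ref{thm:minihyp-griesmer} whenever the corresponding canonical minihyper fits. Values of $w$ not directly produced by a clean canonical type are filled in by adding one, two, or three points to a construction for a smaller value, using Lemma~\ref{lemma_sum}. The matching upper bound in each of these cases is the Griesmer upper bound $g_3^{(2)}(5,n-w)$, for which only routine arithmetic is required.

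For the remaining values $w\in\{3,5,8\}$ and the lower bound $m_3^{(2)}(4,17)\ge 143$, I would supply explicit generator matrices: ILP searches produce the constructions for $w\in\{3,5,8\}$, while the bound for $w=17$ is realised by a $[143,5]_3$ code drawn from the \texttt{Magma} BKLC database. The matching upper bounds for $w\in\{3,5,8\}$ are obtained in the spirit of the proofs of Lemmas~\ref{lemma_3_5_2_4}--\ref{lemma_3_5_2_11}: enumerate (via \texttt{LinCode}) the inequivalent optimal configurations achieving the relevant maximum-plane-multiplicity $w$ inside a hyperplane of $\PG(4,3)$, prescribe each such configuration, and let an ILP exhaust the possible extensions to the whole projective space. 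The upper bound $m_3^{(2)}(4,17)\le 146$ is immediate from the Griesmer upper bound.

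The principal obstacle is the $w=17$ case, where the best construction yields $143$ but the Griesmer upper bound only forces $\le 146$, leaving a gap of three; closing it would require either an improved construction (none being presently known in the relevant range of ternary codes) or a far more delicate case analysis of $(n,17)^{(2)}$-arcs with prescribed hyperplane structure, which appears to be out of reach of the ILP-based techniques used throughout this section. All other cases reduce to the standard combination of Solomon--Stiffler constructions, point additions, and small ILP verifications.
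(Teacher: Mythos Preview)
Your overall strategy matches the paper's, but there is one concrete error and one place where you work much harder than necessary.

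\textbf{The error.} You write that ``the upper bound $m_3^{(2)}(4,17)\le 146$ is immediate from the Griesmer upper bound.'' It is not. A direct computation of $g_3^{(2)}(5,d_2)$ with $d_2=n-17$ shows that the inequality $n\ge g_3^{(2)}(5,n-17)$ is satisfied up to $n=149$, so the Griesmer upper bound only yields $m_3^{(2)}(4,17)\le 149$. The paper obtains $\le 146$ from the \emph{coding upper bound}: one first uses $m_3^{(2)}(3,17)=50$ (equivalent to the existence of a $[50,4,33]_3$ Griesmer code and the non-existence of $[51,4,34]_3$), and then $m_3^{(3)}(4,50)=146$ (from the optimal parameters of $5$-dimensional ternary codes). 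Without this recursive step the bound $146$ is not available.

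\textbf{The unnecessary work.} For $w\in\{3,5,8\}$ you propose to enumerate hyperplane configurations with \texttt{LinCode} and run ILP exhaustions, mirroring Lemmas~\ref{lemma_3_5_2_4}--\ref{lemma_3_5_2_11}. The paper instead dispatches all three of these upper bounds with the coding upper bound in a single sentence, and likewise gets $m_3^{(2)}(4,8)\ge 56$ simply by adding a point to the $w=7$ construction rather than by an ILP search. Your approach would succeed, but it replaces a one-line argument with three separate computer verifications.

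For the Griesmer-attaining values the paper appeals directly to known ternary Griesmer codes ($[55,5,36]_3$, $[81,5,54]_3$, $[108,5,72]_3$, $[121,5,81]_3$, $[136,5,90]_3$, and the fact that $[n,5,d]_3$ Griesmer codes exist for all $d\ge 100$) and then invokes Theorem~\ref{thm:grriesmer1}; your Solomon--Stiffler description is equivalent but requires checking the point-multiplicity constraint on the canonical minihypers case by case.
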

\begin{proof}
  For $m_3^{(2)}(4,4)=20$ we refer to Lemma~\ref{lemma_3_5_2_4} and for 
  $m_3^{(2)}(4,6)=38$ we refer to Lemma~\ref{lemma_3_5_2_6}. For $m_3^{(2)}(4,11)=91$ we refer to Lemma~\ref{lemma_3_5_2_11}.
  For $w\in\{7,9,12,13,16\}$ the existence of $[55,5,36]_3$, $[81,5,54]_3$, $[108,5,72]_3$, $[121,5,81]_3$, and $[136,5,90]_3$ Griesmer codes yields the lower bounds.
  For $w\in \{10,14,15\}$ the lower bound is attained by adding arbitrary points.  Also 
  $m_3^{(2)}(4,8)\ge 56$ is given by adding a point. 
  Since Griesmer $[n,5,d]_3$ codes do exist for all $d\ge 100$, $m_3^{(2)}(4,w)$ is given by the Griesmer upper bound for all $w\ge 18$. 
  For $w\in\{3,4,5,6,11\}$ the stored generator matrices of $[n,5]_3$ codes in the database of \emph{best known linear codes} (BKLC) in \texttt{Magma}
  yield $m_3^{(2)}(4,3)\ge 11$,  $m_3^{(2)}(4,4)\ge 20$, $m_3^{(2)}(4,5)\ge 29$, 
  $m_3^{(2)}(4,6)\ge 38$, and $m_3^{(2)}(4,11)\ge 91$, respectively.

  The coding upper bound gives $m_3^{(2)}(4,3)\le 11$, 
  $m_3^{(2)}(4,5)\le 29$, 
  $m_3^{(2)}(4,8)\le 56$, 
  and $m_3^{(2)}(4,17)\le 146$. All other upper bounds are given by the Griesmer upper bound.
\end{proof}

\begin{proposition}
  If $w\ge 3$, then $m_3^{(1)}(4,w)$ is given by the Griesmer upper bound. Moreover, we have $m_3^{(1)}(4,2)=20$.
\end{proposition}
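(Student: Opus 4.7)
The plan is to treat $w=2$ and $w \ge 3$ separately, mirroring the approach of the preceding propositions in this subsection.

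For $m_3^{(1)}(4,2)=20$: any multiset $\cK$ in $\PG(4,3)$ with line multiplicity at most $2$ contains at most one point of multiplicity $\ge 2$, since otherwise the line through two such points would have multiplicity at least $4$. Moreover, if some point $P$ has $\cK(P)=2$, then every line through $P$ already contributes multiplicity $2$ from $P$, so all other points on such lines must have multiplicity $0$; as every other point of $\PG(4,3)$ lies on a line through $P$, the whole multiset then has multiplicity $2$. Hence a maximum multiset is actually a cap, and Pellegrino's classical theorem that the maximum cap in $\PG(4,3)$ has $20$ points closes the upper bound. The matching lower bound is provided by an explicit Hill/Pellegrino $20$-cap.

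For $w \ge 3$, the upper bound is, by definition of the quantity, the Griesmer upper bound. For matching constructions I apply Theorem~\ref{thm:grriesmer1}: any Griesmer $[n,5,d]_3$ code automatically attains the Griesmer bound for all generalized Hamming weights, in particular for $d_3$, whose complement is $w_1$. A short computation from the base-$3$ expansion $d = 81\sigma - \sum_{i=0}^{3} \varepsilon_i 3^i$ and the formula of Theorem~\ref{thm:grriesmer1} yields $w_1 = 4\sigma - \varepsilon_3$. Thus for $w \equiv 0, 2, 3 \pmod{4}$ one picks $\sigma = \lceil w/4 \rceil$ and $\varepsilon_3 = 4\sigma - w \in \{0,1,2\}$; the corresponding Solomon--Stiffler arc (an affine $4$-space for $w=3$, $\PG(4,3)$ itself for $w=4$, two copies of $\PG(4,3)$ minus two hyperplanes for $w=6$, and so on) provides the construction. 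For $w \equiv 1 \pmod{4}$, no Griesmer code has exactly this $w_1$, but appending a single point to the construction for $w-1$ increases both $n$ and the maximum line multiplicity by one, and the result still meets the Griesmer upper bound.

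Since Griesmer $[n,5,d]_3$ codes exist for all $d \ge 100$ (the threshold already cited in the proof for $m_3^{(2)}(4,w)$), all but finitely many values of $w$ are handled uniformly, and the remaining small cases are either covered by the explicit Solomon--Stiffler constructions above or can be looked up in the BKLC database of \texttt{Magma}. The main obstacle is routine bookkeeping: for each residue of $w$ modulo $4$ and each small $w$ falling below the threshold, one must verify that the required Griesmer code (or Griesmer code plus a point) does indeed exist. This is a finite case analysis with no genuine mathematical difficulty.
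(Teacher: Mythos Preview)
Your proposal is correct and follows essentially the same approach as the paper: Pellegrino's cap result for $w=2$, Griesmer/Solomon--Stiffler constructions for $w\equiv 0,2,3\pmod 4$, and appending a point for $w\equiv 1\pmod 4$. Your write-up is in fact more explicit than the paper's---you spell out the reduction from multisets to caps for $w=2$ and the residue-class analysis via $w_1=4\sigma-\varepsilon_3$, whereas the paper simply cites the $[81,5,54]_3$ and $[121,5,81]_3$ codes and the existence of Griesmer $[n,5,d]_3$ codes for $d\ge 100$.
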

\begin{proof}
  For $m_3^{(1)}(4,2)=20$ we refer e.g.\ to \cite{pellegrino1971, hill1983pellegrino}.\footnote{Up to projective equivalence there are exactly nine different examples.}
  The $[81,5,54]_3$ and the $[121,5,81]_3$ Griesmer codes give examples for $m_3^{(1)}(4,3)\ge 81$ and $m_3^{(1)}(4,4)\ge 121$, respectively. $m_3^{(1)}(4,5)\ge 122$ is obtained by adding an arbitrary point. The other lower bounds follow from the fact that Griesmer $[n,5,d]_3$ codes exist for all $d\ge 100$. The upper bounds are given by the Griesmer upper bound except for $w=2$.
 \end{proof}

We have $m_4^{(1)}(4,2)=41$ \cite{tallini1964calotte,edel199941}{\footnote{Up to symmetry there exist two $41$-caps in $\PG(4,4)$.} and
$m_3^{(1)}(5,2)=56$ \cite{hill1973largest}\footnote{Up to symmetry there exists a unique $56$-cap in $\PG(5,3)$.}.

\section*{Acknowledgments}
%
The second author was  supported  by  the
Bulgarian  National  Science  Research  Fund
under  Grant  KP-06-N72/6-2023. The third author was supported by the
Research Fund of Sofia University under Contract 80-10-14/21.05.2025.


\end{document}